\newtheorem{thm}{Theorem}
\newtheorem{prop}{Proposition}[section]
\newtheorem{lem}[prop]{Lemma}
\newtheorem{cor}[prop]{Corollary}
\theoremstyle{remark}
\numberwithin{equation}{section}
\newcommand{\Uu}{\mathfrak{U}}
\newcommand{\img}{\operatorname{img}}
\newcommand{\norm}[1]{\left\Vert#1\right\Vert}
\newcommand{\cx}{{\mathbb{C}}}
\newcommand{\rl}{{\mathbb{R}}}
\newcommand{\tensor}{\otimes}
\newcommand{\csor}{ \widehat{\otimes} }
\newcommand{\wb}{\mathbb{ W}}
\newcommand{\oc}{\mathcal O}
\newcommand{\cb}{\mathbb C}
\newcommand{\bs}{\mathcal{O}_{L^2}}
\newcommand{\ol}{\overline}
\newcommand{\dbar}{\ol\partial}
\newcommand{\wt}{\widetilde}
\title[Cohomology of annuli]{On the $L^2$-Dolbeault cohomology of annuli}
\author{Debraj Chakrabarti}
\address{Department of Mathematics, Central Michigan University, Mt. Pleasant,  MI 48859,  USA}
\email{chakr2d@cmich.edu}
\author{Christine Laurent-Thiébaut}
\address{ Institut Fourier, Universit{\'e} Grenoble Alpes,
38058 Grenoble cedex 9, France}
\email{christine.laurent@univ-grenoble-alpes.fr}
\author{Mei-Chi Shaw}
\address{Department of Mathematics, University of Notre Dame,  Notre Dame, IN 46556,  USA }
\email{mei-chi.shaw.1@nd.edu}
\thanks{\noindent Debraj Chakrabarti was partially supported by a grant from the Simons Foundation (\# 316632), by an 
Early Career internal grant from Central Michigan University, and also by a grant from the NSF (\#1600371).
  Mei-Chi Shaw was partially supported by a grant from the NSF. All three authors were partially supported by a grant from
  the  AGIR program of Grenoble INP  and Université Joseph Fourier, awarded to Christine Laurent-Thiébaut.}
\begin{document}
\begin{abstract}For certain annuli in $\cx^n$, $n\geq 2$, with non-smooth holes, we show that the $\dbar$-operator from $L^2$ functions to 
$L^2$ $(0,1)$-forms has closed range. The holes admitted include products of pseudoconvex domains and certain intersections of smoothly bounded pseudoconvex domains. 
As a consequence, we obtain estimates in 
the Sobolev space $W^1$ for the $\dbar$-equation on the non-smooth domains which are the holes of these annuli.
\end{abstract}
\maketitle
 \section{Introduction}

\subsection{Results}Let $n\geq 2$,  let $\wt \Omega$ be a bounded  domain in $\Bbb C^n$,  and $K \subset \wt\Omega$ be a non-empty compact  subset such that $\Omega=\wt \Omega \setminus K$ is connected.
 We will refer to $\Omega$ as the {\em annulus between ${K}$ and $\wt{\Omega}$}, and  $K$ as the {\em hole} of the annulus.  Annuli such as $\Omega$ are of great importance in complex analysis, for example as one of the simplest types of domains to exhibit Hartogs' phenomenon.

The goal of this paper is to study
 whether the $\dbar$-operator from $L^2_{0,0}(\Omega)$ to $L^2_{0,1}(\Omega)$ has closed range, and to characterize the range. As a consequence of such closed range results, using a duality argument, we can prove estimates for the $\dbar$-problem on the hole in degree $(0,n-1)$ in the Sobolev space $W^1$. Since our holes can be non-smooth, this leads to 
 Sobolev estimates on certain classes of non-smooth domains, including intersections of smoothly bounded convex domains. 
 Previously  such $W^1$-estimates have been known only on domains of class $\mathcal{C}^2$, and have been unknown even for domains such as the bidisc or the intersection of two balls in $\cx^2$, on both of which we obtain here $W^1$ estimates for the $\dbar$-problem.

     To state our results we introduce a few definitions. For an open subset $U\subset\cx^n$, we define  the $L^2$-Dolbeault cohomology group
\begin{equation}\label{eq-l2coho} H^{p,q}_{L^2}(U) = \frac{\ker(\dbar:L^2_{p,q}(U)\dashrightarrow L^2_{p,q+1}(U))}{\img(\dbar:L^2_{p,q-1}(U)\dashrightarrow L^2_{p,q}(U))},\end{equation}
where the dashed arrows are a reminder that the $\dbar$ operator is defined only on a dense linear subspace of the space $L^2_{p,q}(U)$. Then the quotient topology on  $H^{p,q}_{L^2}(U)$ is Hausdorff if and only if $\dbar: L^2_{p,q-1}(U)\dashrightarrow L^2_{p,q}(U)$  has closed range.  Similarly, we use the notation $H^{p,q}_{W^1}(U)$ when we substitute $L^2$ spaces by $W^1$ spaces, where $W^1(U)$ is the Sobolev space of functions in $L^2(U)$ with all first partial derivatives in
$L^2(U)$. We first note the  following strengthening of \cite[Proposition~4.7]{LS}, which characterizes the annuli with Lipschitz holes on which $\dbar$ has closed range (see also \cite[Teorema~3]{Tr1}):
\begin{thm} \label{thm-hole}
Let $V\Subset\wt\Omega$ be  bounded open subsets of $\cx^n$, $n\geq 2$. Assume $V$ has Lipschitz  boundary and $\Omega=\wt \Omega \setminus \ol V$ is connected,
 then the following are equivalent:
 \begin{enumerate}
   \item  $H_{L^2}^{0,1}(\Omega)$ is Hausdorff.
 \item   $H^{0, n-1}_{W^1}(V)=0$ and $H^{0,1}_{L^2}(\wt{\Omega})$ is Hausdorff.
 \end{enumerate}
\end{thm}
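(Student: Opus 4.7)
My approach proceeds via $L^2$-Serre duality combined with a Mayer--Vietoris-style splitting of $\Omega$ near its two boundary components $\partial V$ and $\partial\wt\Omega$. The heuristic is that solvability of $\dbar$ on the annulus $\Omega$ decomposes into a global problem on $\wt\Omega$ (measured by Hausdorffness of $H^{0,1}_{L^2}(\wt\Omega)$) and a problem localized near the hole $V$ (measured by a top-degree cohomology on $V$, where the shift from $L^2$ to $W^1$ reflects the regularity cost of dualizing across the internal Lipschitz boundary $\partial V$).

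For $(2)\Rightarrow(1)$, given $f\in L^2_{0,1}(\Omega)$ with $\dbar f=0$ and $f\in\ol{\img\dbar}$, I will apply a bounded linear extension operator (which exists because $\partial V$ is Lipschitz) to obtain $\tilde f\in L^2_{0,1}(\wt\Omega)$ extending $f$. Then $\dbar\tilde f$ is a $\dbar$-closed $(0,2)$-form on $\wt\Omega$ supported in $\ol V$. I will construct a correction $g\in L^2_{0,1}(\wt\Omega)$ supported in $\ol V$ satisfying $\dbar g=-\dbar\tilde f$; the existence of such $g$ is exactly what the hypothesis $H^{0,n-1}_{W^1}(V)=0$ provides via an integration-by-parts duality, the $W^1$-regularity ensuring that $g$ is an honest $L^2$-form with no singular jump across $\partial V$. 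The resulting $\tilde f+g$ is $\dbar$-closed on $\wt\Omega$ and, by approximating solutions on $\Omega$ and extending, lies in the closure of $\img\dbar$ there; the Hausdorff hypothesis on $\wt\Omega$ then yields $u\in L^2(\wt\Omega)$ with $\dbar u=\tilde f+g$, and $u|_\Omega$ is the desired primitive of $f$.

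For $(1)\Rightarrow(2)$, I treat the two assertions separately. The vanishing $H^{0,n-1}_{W^1}(V)=0$ I extract via Serre duality on $\Omega$: a $\dbar$-closed class $[\phi]\in H^{0,n-1}_{W^1}(V)$ produces, through a cut-off $W^1$-extension of $\phi$ to a neighborhood of $\ol V$ in $\wt\Omega$ paired against $(0,1)$-forms in $\Omega$, a continuous linear functional on $H^{0,1}_{L^2}(\Omega)$; hypothesis (1) forces this functional, and hence the original class, to be trivial. For the Hausdorffness of $H^{0,1}_{L^2}(\wt\Omega)$, I would take $f\in L^2_{0,1}(\wt\Omega)$ that is $\dbar$-closed and in the closure of $\img\dbar$, solve $\dbar v=f|_\Omega$ on $\Omega$ using (1), and then stitch $v$ to a solution on $V^\circ$ by exploiting that the jump between the two across $\partial V$ is $\dbar$-closed and can be absorbed using the Hartogs-type extension available for $n\geq2$; alternatively, this assertion may be obtained by a parallel Serre-duality argument.

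The main obstacle will be to set up the Serre duality pairing between $L^2_{0,1}(\Omega)$ and $W^1_{0,n-1}(V)$ rigorously on the Lipschitz domain $V$: this requires a $W^1$-extension operator from $V$ to a neighborhood of $\ol V$, precise accounting for the $\partial V$-supported boundary terms in integration by parts, and verification that the pairing descends properly to cohomology. A secondary technical point is the stitching/Hartogs argument in the $(1)\Rightarrow(2)$ step, where the $L^2$-solutions on $\Omega$ and on $V^\circ$ must be reconciled across the internal Lipschitz boundary $\partial V$.
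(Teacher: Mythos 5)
Your overall philosophy (the annulus problem should split into a global problem on $\wt\Omega$ plus a dual $W^1$-problem on the hole) matches the paper's, but the execution has gaps that the paper's actual proof is specifically designed to avoid. The central structural point you are missing is that one should not work with $(0,1)$-forms on $\Omega$ directly: by $L^2$ Serre duality, $H^{0,1}_{L^2}(\Omega)$ is Hausdorff if and only if $H^{0,n}_{c,L^2}(\Omega)$ is, and the paper proves $(2)\Rightarrow(1)$ entirely in that dual, top-degree, minimal-realization complex. There the two extension steps you need become legitimate: holomorphic $(n,0)$-forms on $\Omega$ extend across the hole by Hartogs, and a form $g-\dbar h$ in the domain of $\dbar$ on $\wt\Omega$ that has been corrected to be ``zero from the inside'' on $V$ restricts to an element of $A^{0,n-1}_{c,L^2}(\Omega)$ because $\partial V$ is Lipschitz (this is where \cite[Proposition~2]{ChaSh} enters). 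By contrast, your first step --- extending an arbitrary $L^2$ $\dbar$-closed $(0,1)$-form $f$ on $\Omega$ to $\tilde f$ on $\wt\Omega$ so that $\dbar\tilde f$ is an $L^2$ $(0,2)$-form supported in $\ol V$ --- is not available: Sobolev extension operators require $W^1$ control of all derivatives of $f$, which you do not have, and any merely-$L^2$ extension produces a distributional $\dbar\tilde f$ with an order-$(-1)$ layer on $\partial V$, not an $L^2$ form. Moreover, for $n\geq 3$ you would then need to solve a compactly supported problem in degree $(0,2)$, while the hypothesis $H^{0,n-1}_{W^1}(V)=0$ lives in degree $(0,n-1)$; bridging that is exactly the duality you defer to ``the main obstacle,'' i.e., the actual content of the theorem. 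Finally, your claim that $\tilde f+g$ lies in $\ol{\img\dbar}$ on $\wt\Omega$ ``by approximating solutions on $\Omega$ and extending'' begs the question, since extending the approximants is the same unsolved extension problem.

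In $(1)\Rightarrow(2)$ there is a logical error: Hausdorffness of $H^{0,1}_{L^2}(\Omega)$ does not force a continuous linear functional on it to vanish (the group can be large; it is merely separated). What Hausdorffness gives, via the $L^2$ Serre duality recorded in Section~2 of the paper, is that an element of $H^{0,n}_{c,L^2}(\Omega)$ pairing to zero with all of $H^{n,0}_{L^2}(\Omega)$ must be zero. The correct argument (Proposition~\ref{prop-hole1}) therefore runs the pairing the other way: cut off $f\in W^1_{0,n-1}$ with $\dbar f=0$ on $\ol V$, observe that $\dbar f$ defines a class in $H^{0,n}_{c,L^2}(\Omega)$ orthogonal to every holomorphic $(n,0)$-form on $\Omega$ (because such forms extend across the hole by Hartogs and one integrates by parts over $\wt\Omega$), conclude $\dbar f=\dbar g$ with $g$ compactly supported in $\Omega$, and then solve the compactly supported problem for $f-\wt g$ on $\cx^n$. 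Note also that your proposed pairing of $(0,n-1)$-forms on $V$ against $(0,1)$-forms on $\Omega$ does not typecheck: the wedge is a $(0,n)$-form, which cannot be integrated over a domain in $\cx^n$. The Hausdorffness of $H^{0,1}_{L^2}(\wt\Omega)$ is the one place where your sketch is essentially right, and the paper's Proposition~\ref{prop-hole2} does it more simply than your stitching: solve on $\Omega$, multiply by $1-\chi$ with $\chi\equiv 1$ near the hole, and correct the compactly supported $L^2$ error by solving $\dbar$ on $\cx^n$ --- no solution on $V$ itself is needed.
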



Thanks to Theorem~\ref{thm-hole}, the question of closed range in the $L^2$-sense on annuli is reduced to  an estimate in the $W^1$ norm for the $\dbar$-problem on the hole, and an $L^2$-estimate for the $\dbar$-problem on the  domain $\wt{\Omega}$. From Kohn's  theory  of the weighted $\dbar$-Neumann problem (see \cite{Ko}), it follows that for a $\mathcal{C}^\infty$-smooth bounded pseudoconvex domain $V$ in $\cx^n$, we have $H^{p,q}_{W^1}(V)=0$, if $q\geq 0$. This therefore gives 
examples of domains  to which Theorem~\ref{thm-hole} applies. 
 In this paper we give examples of more general non-smooth holes for which closed range of $\dbar$ holds in the annulus. Our first result in this direction is the following:

\begin{thm}\label{thm-new}Let $\wt{\Omega}$ be a domain in $\cx^n$, $n\geq 2$, and let $K\subset \wt{\Omega}$ be a  compact set such that $\Omega=\wt{\Omega}\setminus K$ is connected.
Suppose that
\begin{enumerate}
\item $H^{0,1}_{L^2}(\wt{\Omega})$ is Hausdorff.
\item $K=\bigcap_{j=1}^N K_j$, where for $1\leq j \leq N$, $K_j\subset \wt{\Omega}$ is a  compact set such that
$\wt{\Omega}\setminus K_j$ is connected, and  $H^{0,1}_{L^2}(\wt{\Omega}\setminus K_j)$ is Hausdorff.
 \item for each pair of indices $1\leq i,j\leq N$, the set 
 $\wt{\Omega}\setminus (K_i\cup K_j)$ is connected.
\end{enumerate}

Then $H^{0,1}_{L^2}(\Omega)$ is Hausdorff.
\end{thm}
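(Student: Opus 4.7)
My plan is to prove Theorem~\ref{thm-new} by a Čech--Dolbeault patching argument combined with induction on $N$.

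\emph{Setup and local solutions.} Let $g \in L^2_{0,1}(\Omega)$ be $\dbar$-closed and in the closure of $\img\dbar$; I want to show $g \in \img\dbar$. Setting $\Omega_j := \wt\Omega \setminus K_j$, the equality $K = \bigcap_j K_j$ yields the open cover $\Omega = \bigcup_j \Omega_j$. The property ``in the closure of $\img\dbar$'' is preserved under restriction to each $\Omega_j$, and Hausdorffness of $H^{0,1}_{L^2}(\Omega_j)$ from hypothesis~(2) supplies local primitives $h_j \in L^2(\Omega_j)$ with $\dbar h_j = g|_{\Omega_j}$. Pick a partition of unity $\{\chi_j\}$ subordinate to $\{\Omega_j\}$ and set $H := \sum_j \chi_j h_j \in L^2(\Omega)$. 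A direct computation gives $\dbar H - g = \sum_j (\dbar\chi_j)\,h_j$, which on each $\Omega_i$ rewrites (via $\sum_j \dbar\chi_j = 0$) as $\sum_j (\dbar\chi_j)(h_j - h_i)$. Thus the obstruction to solving $\dbar u = g$ in $L^2(\Omega)$ is encoded in the Čech $1$-cocycle $c_{ij} := h_i - h_j \in \mathcal{O}(\Omega_i \cap \Omega_j) = \mathcal{O}\bigl(\wt\Omega \setminus (K_i\cup K_j)\bigr)$: if $c_{ij}$ splits as $c_{ij} = a_i - a_j$ with $a_i \in \mathcal{O}(\Omega_i)$ and $A := \sum_j \chi_j a_j \in L^2(\Omega)$, then $g = \dbar(H - A)$.

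\emph{Induction on $N$.} To perform the splitting I would induct on $N$. The base case $N=1$ is trivial. For the inductive step, set $K' := \bigcap_{j<N} K_j$ and $U' := \wt\Omega \setminus K'$. The subfamily $\{K_1, \ldots, K_{N-1}\}$ inherits all three hypotheses from the full family, so the inductive hypothesis gives Hausdorffness of $H^{0,1}_{L^2}(U')$. The task reduces to the two-set cover $\Omega = U' \cup \Omega_N$: transfer Hausdorffness from $U'$ and $\Omega_N$ to their union $\Omega$. In this binary case the cocycle collapses to a single holomorphic function $c$ on $U' \cap \Omega_N = \wt\Omega \setminus (K' \cup K_N)$, and splitting it as $c = a_1 - a_2$ with $a_1 \in \mathcal{O}(U')$, $a_2 \in \mathcal{O}(\Omega_N)$ is a Cousin~I problem with $L^2$-control.

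\emph{Main obstacle.} The crux of the argument is this two-set Cousin~I splitting on the annulus $\wt\Omega \setminus (K' \cup K_N)$: its hole $K' \cup K_N$ need not be Lipschitz, so Theorem~\ref{thm-hole} does not apply directly to this intermediate set. I would translate the splitting into a global $\dbar$-problem on $\wt\Omega$ by multiplying $c$ by a smooth cutoff isolating one ``polar'' side of the hole, extending by zero to an $L^2_{0,1}$ form on $\wt\Omega$, and using hypothesis~(1) to solve $\dbar$ there; the resulting solution, suitably decomposed, yields the desired $a_1, a_2$. Hypothesis~(3) enters as well, guaranteeing that the connectedness of complements of pairwise unions $\wt\Omega \setminus (K_i \cup K_j)$ keeps the Čech data coherent as the induction peels off one $K_j$ at a time. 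Making the extension--solution scheme quantitative in $L^2$, so that $A$ lands in $L^2(\Omega)$, is where I expect the bulk of the technical work to lie.
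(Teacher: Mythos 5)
Your overall architecture matches the paper's: cover $\Omega$ by $\Omega_j=\wt\Omega\setminus K_j$, use Hausdorffness of each $H^{0,1}_{L^2}(\Omega_j)$ to get local primitives, and reduce everything to splitting a holomorphic \v{C}ech $1$-cocycle on the double intersections $\Omega_{ij}=\wt\Omega\setminus(K_i\cup K_j)$ with $L^2$ control. (The paper packages the patching step as an $L^2$ Leray-type theorem, Theorem~\ref{thm-leray}, proved by a diagram chase with bounded solution operators rather than a partition of unity, but that difference is cosmetic.) The problem is that you leave the crux --- the cocycle splitting --- unresolved, and the route you sketch for it does not go through. First, the two ``holes'' $K'=\bigcap_{j<N}K_j$ and $K_N$ in your binary Cousin~I problem generally overlap (their intersection is $K$), so the correction term $c\,\dbar\chi$ lives on $\wt\Omega\setminus(K'\cup K_N)$ and must be solved on a larger set; more importantly, hypothesis~(1) asserts only that $H^{0,1}_{L^2}(\wt\Omega)$ is \emph{Hausdorff}, i.e.\ that $\dbar$ has closed range, not that it is surjective onto the $\dbar$-closed forms. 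So ``using hypothesis~(1) to solve $\dbar$ there'' is not available: there is no reason the extended form should be exact on $\wt\Omega$, and the scheme stalls exactly at the point you flag as the main technical obstacle.

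The missing idea is the Hartogs extension phenomenon, and it is precisely what hypothesis~(3) is there to enable. Each $\Omega_{ij}=\wt\Omega\setminus(K_i\cup K_j)$ is, by hypothesis~(3), a \emph{connected} complement of a compact set in a domain in $\cx^n$ with $n\geq 2$, so every $f_{ij}\in\mathcal{O}_{L^2}(\Omega_{ij})$ extends holomorphically across the hole to $\wt f_{ij}\in\mathcal{O}(\wt\Omega)$; since the extension is bounded on the compact set $K_i\cup K_j$, it stays in $\mathcal{O}_{L^2}(\wt\Omega)$, with norm controlled by that of $f_{ij}$. The cocycle relation $f_{ij}-f_{jk}+f_{ki}=0$ persists for the extensions by analytic continuation, and the splitting is then explicit: $u_1=0$ and $u_j=\wt f_{1j}|_{\Omega_j}$ give $\delta u=f$. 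This shows $\check H^1(\Uu,\mathcal{O}_{L^2})=0$ outright --- no induction on $N$, no cutoffs, no $\dbar$-solving on $\wt\Omega$, and the $L^2$ bounds come for free. If you prefer your cutoff formulation, the correct substitute for hypothesis~(1) is the vanishing of compactly supported $\dbar$-cohomology in degree $(0,1)$ in $\cx^n$, $n\geq 2$ --- which is just Hartogs in another guise --- but the direct extension of the holomorphic cocycle is simpler and is what the paper does.
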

\noindent By Theorem~\ref{thm-hole}, if $K_j$ is the closure of a Lipschitz domain $V_j$ such that $H^{0,n-1}_{W^1}(V_j)=0$,
then hypothesis (2) is satisfied (in particular, we can take $K_j=\ol{V_j}$, where $V_j$ is a smoothly bounded pseudoconvex domain). Further,  if the sets $K_j$ are taken to be closures of smoothly bounded {\em convex} domains or 
closures of smoothly bounded pseudoconvex domains which are 
{\em star-shaped} with respect to a common point, then the hypothesis (3) will be automatically satisfied. 

Our approach to Theorem~\ref{thm-new} (as well as Theorem~\ref{thm-W1} below) is based on an analog of the Leray theorem in the $L^2$ setting  which  allows us to replace questions about the $L^2$-Dolbeault cohomology $H^{0,1}_{L^2}(\Omega)$  with questions about the \v{C}ech cohomology $\check{H}^1(\Uu, \bs)$ (with coefficients in the presheaf $\bs$ of $L^2$ holomorphic functions), where  $\Uu$ is a cover  of the domain $\Omega$ by sets on each of which there is an $L^2$ estimate for the $\dbar$-operator  (see Theorem~\ref{thm-leray} below).

Combined with Theorem~\ref{thm-hole}, Theorem~\ref{thm-new} gives the following estimate for the $\dbar$-problem 
on a class of non-smooth domains:
\begin{cor}\label{cor-new} Let $V\Subset \cx^n$ be a Lipschitz domain  such that $\cx^n\setminus V$ is connected. Suppose that  $\ol{V}=\bigcap_{j=1}^N \ol{V_j}$, where for $1\leq j \leq N$, $V_j\Subset \cx^n$ is a Lipschitz domain such that $H^{0,n-1}_{W^1}(V_j)=0$. If $\cx^n\setminus V_j$ is connected for each $j$, and $\cx^n\setminus (V_i\cup V_j)$ is connected for each $1\leq i,j \leq N$,  then
$H^{0,n-1}_{W^1}(V)=0$.
\end{cor}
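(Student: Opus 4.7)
The plan is to reduce the Sobolev vanishing statement for $V$ to a closed-range statement for $\dbar$ on a suitable annulus, and then to apply the two previously stated theorems in sequence.

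First I would choose a large open ball $B \Subset \cx^n$ containing $\overline{V_1}\cup\cdots\cup\overline{V_N}$ in its interior. Since $B$ is a bounded pseudoconvex (in fact convex) domain, $H^{0,1}_{L^2}(B)$ vanishes and in particular is Hausdorff. Set $\widetilde{\Omega}=B$. I would then verify that each of the annuli $B\setminus\overline{V_j}$, $B\setminus\overline{V}$, and $B\setminus(\overline{V_i}\cup\overline{V_j})=B\setminus\overline{V_i\cup V_j}$ is connected; this is a routine topological consequence of the hypotheses that $\cx^n\setminus V_j$, $\cx^n\setminus V$, and $\cx^n\setminus(V_i\cup V_j)$ are connected, together with the Lipschitz regularity (which permits the slight perturbation of paths avoiding the measure-zero boundaries) and the largeness of $B$.

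Next, for each $j$, I would apply Theorem~\ref{thm-hole} with $\widetilde{\Omega}=B$ and $V=V_j$: the hypotheses are satisfied because $V_j$ is Lipschitz, $B\setminus\overline{V_j}$ is connected, $H^{0,1}_{L^2}(B)$ is Hausdorff, and by assumption $H^{0,n-1}_{W^1}(V_j)=0$. Consequently $H^{0,1}_{L^2}(B\setminus\overline{V_j})$ is Hausdorff for every $j$. At this point all three hypotheses of Theorem~\ref{thm-new} are in place, with $\widetilde{\Omega}=B$, $K_j=\overline{V_j}$, and $K=\bigcap_{j=1}^N\overline{V_j}=\overline{V}$ (the last equality uses that $V$ is Lipschitz, so its closure agrees with the intersection of the closures of the $V_j$'s). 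Applying Theorem~\ref{thm-new} I conclude that $H^{0,1}_{L^2}(B\setminus\overline{V})$ is Hausdorff.

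Finally I would run Theorem~\ref{thm-hole} in the reverse direction, this time with $\widetilde{\Omega}=B$ and the Lipschitz hole equal to $V$ itself: the condition (1) of that theorem now holds by the previous paragraph, so implication $(1)\Rightarrow(2)$ yields $H^{0,n-1}_{W^1}(V)=0$, which is exactly Corollary~\ref{cor-new}.

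The main obstacle in executing this plan is not really the cohomological machinery — the theorems are designed to be chained in this way — but rather the bookkeeping of connectedness. In particular one must check carefully that the hypothesis $\cx^n\setminus(V_i\cup V_j)$ connected actually yields connectedness of $B\setminus(\overline{V_i}\cup\overline{V_j})$ inside the chosen ball, and that passing from $\cx^n$-complements of open sets to $B$-complements of their closures does not introduce disconnecting phantom components. For Lipschitz holes this is a standard argument, but it is the only place where care is genuinely required.
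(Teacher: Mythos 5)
Your proposal is correct and follows essentially the same route as the paper: the authors also take $\wt{\Omega}$ to be a large ball, set $K_j=\ol{V_j}$, apply Theorem~\ref{thm-new} to get that $H^{0,1}_{L^2}(\wt{\Omega}\setminus\ol{V})$ is Hausdorff, and then return to the $W^1$ statement via Proposition~\ref{prop-hole1} (the $(1)\Rightarrow(2)$ half of Theorem~\ref{thm-hole}). The only cosmetic difference is that the paper states and proves the result for a regular compact hole $K=\bigcap_j\ol{V_j}$ (Corollary~\ref{cor-newbis}) and leaves the connectedness bookkeeping, which you rightly flag as the one point requiring care, as ``easy to verify.''
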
 
If we define the Sobolev spaces correctly (see Corollary~\ref{cor-newbis} below), one can obtain analogous results for much more general domains.

In the case of intersection of {\em two} smoothly bounded pseudoconvex domains in $\cx^n$,
it is known that the $\dbar$-Neumann operator is compact in degree  $n-1$, provided it is compact on each domain (cf. \cite{ayyurustraube}). It would be interesting to know if this result is related to the above result.

 Next, we consider the  case when the hole is a {\em product. }
 
  \begin{thm}\label{thm-W1}
 Let $N\geq 2$, and for $j=1, \dots, N$, let $V_j$ be a bounded Lipschitz domain in $\cx^{n_j}$, $n_j\geq 1$, such that $\cx^{n_j}\setminus V_j$ is connected. If the dimension $n_j\geq 2$, assume further that $H^{0,n_j-1}_{W^1}(V_j)=0$.  
 
 Set $n=\sum_{j=1}^N n_j$ and let $V=V_1\times \dots\times V_N\Subset \cx^n$. Then  $H^{0,n-1}_{W^1}(V)=0$.
  \end{thm}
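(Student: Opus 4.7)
The plan is to combine Theorem~\ref{thm-hole} and Theorem~\ref{thm-new}: first I would recast the $W^1$ vanishing on $V$ as an $L^2$ closed-range statement on a surrounding annulus, then write the product hole as an intersection of ``cylindrical'' compact sets to which Theorem~\ref{thm-new} applies. Concretely, pick for each $j$ an open ball $D_j \Subset \cx^{n_j}$ with $\ol{V_j} \subset D_j$, and set $\wt\Omega = D_1 \times \cdots \times D_N$. This polydisc is pseudoconvex, so $H^{0,1}_{L^2}(\wt\Omega)=0$ is Hausdorff. The product $V$ is Lipschitz, and $\cx^n \setminus V$ is connected (write the complement as a union of the $N$ connected cylinders $\cx^{n_1} \times \cdots \times (\cx^{n_j} \setminus V_j) \times \cdots \times \cx^{n_N}$ with pairwise nonempty overlaps). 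By Theorem~\ref{thm-hole}, the desired conclusion $H^{0,n-1}_{W^1}(V)=0$ is equivalent to $H^{0,1}_{L^2}(\wt\Omega \setminus \ol V)$ being Hausdorff.

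To establish the Hausdorff property I would apply Theorem~\ref{thm-new} with the following decomposition of $\ol V$: for each $j$ fix a ball $D_j' \Subset D_j$ still containing $\ol{V_j}$, and set
\[ K_j = \ol{D_1'} \times \cdots \times \ol{D_{j-1}'} \times \ol{V_j} \times \ol{D_{j+1}'} \times \cdots \times \ol{D_N'} \subset \wt\Omega.\]
Each $K_j$ is compact in $\wt\Omega$ and $\bigcap_{j=1}^N K_j = \ol V$. The connectedness hypotheses in Theorem~\ref{thm-new} are straightforward: $\wt\Omega \setminus K_j$ is a union of $N$ connected ``product strips'' (the $i$-th factor is $D_j \setminus \ol{V_j}$ when $i=j$, and $D_i \setminus \ol{D_i'}$ otherwise) with pairwise nonempty intersections, and an analogous decomposition shows that $\wt\Omega \setminus (K_i \cup K_j)$ is connected.

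The crux is to verify hypothesis (2) of Theorem~\ref{thm-new}: that $H^{0,1}_{L^2}(\wt\Omega \setminus K_j)$ is Hausdorff for each $j$. Applying Theorem~\ref{thm-hole} a second time, now to the annulus $\wt\Omega \setminus K_j$ (with Lipschitz hole $K_j$ inside the pseudoconvex polydisc $\wt\Omega$), this reduces to establishing
\[ H^{0,n-1}_{W^1}\bigl(D_1' \times \cdots \times D_{j-1}' \times V_j \times D_{j+1}' \times \cdots \times D_N'\bigr) = 0.\]
The interior is again a product of Lipschitz domains satisfying the hypotheses of Theorem~\ref{thm-W1}, but now only the single factor $V_j$ is possibly non-smooth while the remaining $N-1$ are smoothly bounded balls, so the geometry has been substantially simplified.

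The main obstacle---and the heart of the theorem---is precisely this reduced situation in which exactly one factor is a general Lipschitz domain and the others are smooth pseudoconvex balls. I would tackle it by a Künneth-type argument: given a $\dbar$-closed $(0,n-1)$-form $f$ in $W^1$ on the product, first solve $\dbar$ along the ball factors (where Kohn's weighted $\dbar$-Neumann theory provides $W^1$ estimates and solutions that depend holomorphically on the $V_j$-variables), thereby reducing $f$ to a residual piece that carries only $V_j$-cohomological content, and then invoke the hypothesis $H^{0,n_j-1}_{W^1}(V_j)=0$ to finish (when $n_j=1$, no such hypothesis is assumed and an elementary planar $\dbar$-estimate for the simply-connected Lipschitz $V_j \subset \cx$ suffices instead). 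The delicate point, where most of the technical effort is likely to lie, is making this separation-of-variables argument quantitative in the $W^1$ norm throughout---rather than merely in $L^2$---so that the $W^1$ estimate on the full product can be assembled from partial estimates on the factors without loss of regularity at the iterated Lipschitz edges.
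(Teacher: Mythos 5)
Your opening move is the same as the paper's: choose a product of balls $U=U_1\times\dots\times U_N$ containing $\ol V$ and use Theorem~\ref{thm-hole} to convert $H^{0,n-1}_{W^1}(V)=0$ into the Hausdorff property of $H^{0,1}_{L^2}(U\setminus \ol V)$. But from there the argument has a genuine gap. Your application of Theorem~\ref{thm-new} with the cylinders $K_j=\ol{D_1'}\times\dots\times\ol{V_j}\times\dots\times\ol{D_N'}$ reduces the theorem to verifying $H^{0,n-1}_{W^1}\bigl(D_1'\times\dots\times V_j\times\dots\times D_N'\bigr)=0$, which is itself an instance of Theorem~\ref{thm-W1} --- and not a substantially easier one. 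A product of balls is not smoothly bounded, so the ``reduced'' domain is still an $N$-fold product with all of the \v{S}ilov-boundary pathologies described in the paper's remarks (forms in $\Dom(\dbar^*)$ forced to vanish on the distinguished boundary, the $\dbar$-Neumann operator failing to preserve smoothness on the bidisc). The reduction therefore does not dispose of the core difficulty; it relocates it. And the argument you then sketch for this core case --- solving $\dbar$ factor by factor directly in the $W^1$ topology on the product --- is precisely the separation-of-variables approach that is known to lose derivatives: the paper points out that this method (cf.\ \cite{prod}) only yields solvability $W^N_{0,n-1}(V)\to W^1_{0,n-2}(V)$, not $W^1\to W^1$. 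You flag the quantitative $W^1$ control ``at the iterated Lipschitz edges'' as the delicate point but do not supply it; that delicate point is the theorem.

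The paper's proof avoids doing any Künneth argument in $W^1$ at all. All of the separation of variables happens in the $L^2$ topology on the \emph{annulus} $\wb=U\setminus\ol V$, where it is legitimate: the annulus is covered by the strips $\Omega_j=U_1\times\dots\times R_j\times\dots\times U_N$ with $R_j=U_j\setminus\ol{V_j}$ (each $R_j$ has Hausdorff $H^{0,1}_{L^2}$ by a first application of Theorem~\ref{thm-hole} to the factor, or trivially when $n_j=1$), each $\Omega_j$ has Hausdorff $H^{0,1}_{L^2}$ by the $L^2$ Künneth theory of \cite{prod, spec}, and the obstruction to gluing is the \v{C}ech group $\check{H}^1(\Uu,\mathcal{O}_{L^2})$, shown to be Hausdorff in Proposition~\ref{prop-cechcomputation} via the closed-range property of Bergman-space restriction maps (Lemma~\ref{lem-cr1}) and orthogonal decompositions under Hilbert tensor products. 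The $L^2$ Leray theorem (Theorem~\ref{thm-leray}) then gives the Hausdorff property of $H^{0,1}_{L^2}(\wb)$, and only at the very end does Theorem~\ref{thm-hole} convert this back into the $W^1$ statement on $V$. If you want to salvage your outline, you would need either to prove the one-Lipschitz-factor case by this annulus/\v{C}ech route anyway (at which point the detour through Theorem~\ref{thm-new} is unnecessary), or to produce a genuinely new $W^1$ Künneth argument on non-smooth products, which is exactly what the literature lacked.
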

  When the factors are one-dimensional, a similar result holds with much less boundary regularity requirement on the factors.
  
Before Theorem~\ref{thm-W1}, it was known by a different method   (cf. \cite{prod}) that  
given $g\in W^N_{0,n-1}(V)$ such that $\dbar g=0$,
there is a $u\in W^1_{0,n-2}(V)$ such that $\dbar u=g$. Theorem~\ref{thm-W1} improves this result considerably.

Combining Theorems~\ref{thm-W1} and \ref{thm-hole}, we have the following:
\begin{cor}\label{cor-W1} Let $V\Subset \cx^n$ be a  domain which is a product as in Theorem~\ref{thm-W1}.  Let $\wt{\Omega}$ be a domain such that $H^{0,1}_{L^2}(\wt{\Omega})$ is Hausdorff, and $V\Subset \wt{\Omega}$. If $\Omega= \wt{\Omega}\setminus \ol{V}$ is connected, then $H^{0,1}_{L^2}(\Omega)$ is Hausdorff.
\end{cor}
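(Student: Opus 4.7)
The plan is a direct combination of Theorems~\ref{thm-hole} and \ref{thm-W1}. Since Theorem~\ref{thm-hole} characterizes exactly when $H^{0,1}_{L^2}(\Omega)$ is Hausdorff in terms of (a) Hausdorffness of $H^{0,1}_{L^2}(\wt{\Omega})$, and (b) vanishing of $H^{0,n-1}_{W^1}(V)$, the strategy is to verify that both conditions hold and then invoke the implication $(2)\Rightarrow(1)$ of Theorem~\ref{thm-hole}. Condition (a) is literally one of the assumptions in the statement, while condition (b) will be the content of Theorem~\ref{thm-W1} once we check that it applies.

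First I would verify that the product $V=V_1\times\cdots\times V_N$ fits into the setup of Theorem~\ref{thm-hole}. Boundedness of $V$ is immediate from boundedness of each $V_j$, and the relation $V\Subset\wt{\Omega}$ is part of the hypothesis. The Lipschitz regularity of $V$ follows from the standard fact that a finite product of bounded Lipschitz domains is itself a bounded Lipschitz domain: at each boundary point, after permuting coordinates, $\partial V$ is locally the graph (in a single real coordinate of one factor $\cx^{n_j}$) of a Lipschitz function, extended trivially in the remaining complementary coordinates. The connectedness of $\Omega=\wt{\Omega}\setminus\overline{V}$ is also part of the hypothesis.

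Next I would apply Theorem~\ref{thm-W1} verbatim to $V$: the hypotheses on each factor $V_j$ are the same in both statements, so we get $H^{0,n-1}_{W^1}(V)=0$. Combined with the assumption that $H^{0,1}_{L^2}(\wt{\Omega})$ is Hausdorff, condition (2) of Theorem~\ref{thm-hole} is satisfied, and hence condition (1) holds, yielding the Hausdorffness of $H^{0,1}_{L^2}(\Omega)$.

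There is no substantive obstacle, as the entire argument is a bookkeeping verification that the hypotheses of Theorem~\ref{thm-hole} are met. The only minor point requiring explicit care is the Lipschitz regularity of a product of Lipschitz domains, which is elementary; everything else is a direct invocation of the two main theorems of the paper.
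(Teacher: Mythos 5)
Your proof is correct and follows exactly the paper's route: the corollary is obtained there precisely by combining Theorem~\ref{thm-W1} (which yields $H^{0,n-1}_{W^1}(V)=0$) with the implication $(2)\Rightarrow(1)$ of Theorem~\ref{thm-hole}, using the hypotheses that $H^{0,1}_{L^2}(\wt{\Omega})$ is Hausdorff and $\Omega$ is connected. The only small imprecision is in your justification that the product is Lipschitz: at a boundary point where two or more factors are simultaneously on their boundaries, the local graph direction cannot be a single coordinate of one factor but must combine coordinates from the different factors (the domain is locally the region above a $\max$ of Lipschitz graphs in a tilted direction); this is a standard fact and does not affect the argument.
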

Corollary \ref{cor-W1} solves the so-called {\em Chinese Coin Problem}, which is to obtain $L^2$-estimates for the $\dbar$
operator in an annulus $\wt{\Omega}\setminus \ol{V}$ in $\cx^2$, where $\wt\Omega$ is a ball,  and $V\Subset \wt{\Omega}$ is a bidisc (see \cite{LS2}).
Alternatively, the existence of such estimates also follows from  Theorem~\ref{thm-new}, since the bidisc can be represented
as the intersection of two smoothly bounded convex domains.

The question arises now of characterizing the cohomology group $H^{0,1}_{L^2}(\Omega)$. When the dimension $n=2$, it is known (see Corollary~\ref{cor-pseudoconvex} to Theorem~\ref{thm-fu} below) that the $L^2$-cohomology in degree $(0,1)$ of an annulus in $\cx^2$  is infinite dimensional, provided that the hole is Lipschitz.  Therefore, we need only to consider the case $n\geq 3$. 
When $\wt{\Omega}$ is strongly pseudoconvex and $K$ is also the closure of a  strongly pseudoconvex domain, 
the annulus $\Omega=\wt{\Omega}\setminus K$ satisfies the condition $Z(q)$ for $q\not = n-1$ (see \cite{Hor1, fk}). It follows 
 that for $n\geq 3$, we have for such annuli $H^{0,1}_{L^2}(\Omega)=0$.  It was shown in \cite{Sh1} and \cite[Theorem~2.2]{Sh2} that 
 even in the situation when $\wt{\Omega}$ is a smoothly bounded pseudoconvex domain and $K$ is 
 the closure of a smoothly bounded pseudoconvex domain, then we have $H^{0,1}_{L^2}(\Omega)=0$, 
 when the dimension $n\geq 3$. We can generalize this to the situation of Theorem~\ref{thm-new}:
 \begin{cor}\label{cor-new3}Let $\Omega=\wt{\Omega}\setminus K$ be an  annulus, which for some compact sets 
 $K_j$, $1\leq j\leq N$ satisfies the hypotheses (1), (2) and (3) of Theorem~\ref{thm-new}. Suppose further that 
 for each $j$, we have $H^{0,1}_{L^2}(\wt{\Omega}\setminus K_j)=0$. Then $H^{0,1}_{L^2}({\Omega})=0$.
 \end{cor}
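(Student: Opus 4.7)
My plan is to adapt the Čech-theoretic strategy underlying Theorem~\ref{thm-new}, strengthening its Hausdorffness conclusion to outright vanishing by exploiting the acyclicity hypothesis $H^{0,1}_{L^2}(\wt\Omega\setminus K_j)=0$ at every step.

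Consider the natural open cover $\mathfrak{U}=\{U_j\}_{j=1}^{N}$ of $\Omega$ by $U_j:=\wt\Omega\setminus K_j$, so that $\Omega=\bigcup_j U_j$. Theorem~\ref{thm-new} already guarantees that $H^{0,1}_{L^2}(\Omega)$ is Hausdorff. The additional vanishing hypothesis $H^{0,1}_{L^2}(U_j)=0$ makes the cover $L^2$-acyclic in degree one, which is the key acyclicity input for the $L^2$-Leray theorem (Theorem~\ref{thm-leray}) to supply an isomorphism
\[
H^{0,1}_{L^2}(\Omega)\ \cong\ \check H^{1}(\mathfrak{U},\mathcal{O}_{L^2}).
\]
Thus the problem reduces to showing that every \v Cech $1$-cocycle $\{f_{ij}\}$ with $f_{ij}\in\mathcal{O}_{L^2}(U_i\cap U_j)$ is a \v Cech coboundary, i.e.\ $f_{ij}=g_i-g_j$ for some $g_j\in\mathcal{O}_{L^2}(U_j)$.

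I would proceed by induction on $N$. The base case $N=1$ is immediate. For $N\geq 2$, put $K':=\bigcap_{j=1}^{N-1}K_j$; the hypotheses (1), (2), (3) of Theorem~\ref{thm-new} and of the corollary restrict to the family $\{K_j\}_{j=1}^{N-1}$ with intersection $K'$ — notably, $\wt\Omega\setminus(K'\cup K_N)=\bigcup_{j<N}(U_j\cap U_N)$ is a union of the connected sets $U_j\cap U_N$ sharing the nonempty common intersections $U_i\cap U_j\cap U_N$, and is therefore connected. The inductive hypothesis gives $H^{0,1}_{L^2}(\wt\Omega\setminus K')=0$, reducing the problem to the two-compact version of the corollary applied to $(K',K_N)$. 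The Mayer--Vietoris sequence for the cover $\{\wt\Omega\setminus K',\,\wt\Omega\setminus K_N\}$ of $\Omega$ then identifies $H^{0,1}_{L^2}(\Omega)$ with the cokernel of the difference map
\[
\mathcal{O}_{L^2}(\wt\Omega\setminus K')\oplus\mathcal{O}_{L^2}(\wt\Omega\setminus K_N)\ \longrightarrow\ \mathcal{O}_{L^2}(\wt\Omega\setminus(K'\cup K_N)),
\]
so everything comes down to an $L^2$-Mittag--Leffler decomposition on the intersection.

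The main obstacle is precisely this two-compact Mittag--Leffler step: showing that every $\phi\in\mathcal{O}_{L^2}(\wt\Omega\setminus(K'\cup K_N))$ splits as $\phi=\phi_1-\phi_2$ with $\phi_1\in\mathcal{O}_{L^2}(\wt\Omega\setminus K')$ and $\phi_2\in\mathcal{O}_{L^2}(\wt\Omega\setminus K_N)$. The naive $\dbar$-correction — take $\chi\in C^\infty(\Omega)$ with $\chi\equiv 0$ near $K_N\cap\Omega$ and $\chi\equiv 1$ near $K'\cap\Omega$, form $\beta:=\dbar(\chi\phi)\in L^2_{0,1}(\Omega)$, and seek $v\in L^2(\Omega)$ with $\dbar v=\beta$ — is obstructed because the disjoint closed sets $K'\cap\Omega$ and $K_N\cap\Omega$ share the common accumulation set $K=K'\cap K_N\subset\partial\Omega$, so no smooth cutoff extends smoothly across the ambient $\wt\Omega$ and the resulting $\beta$ need not be compactly supported in $\Omega$. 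I expect the resolution to rest on the interplay between the Hausdorff property from Theorem~\ref{thm-new} and the vanishing on the two pieces, via the $L^2$-Leray framework: the class $[\beta]\in H^{0,1}_{L^2}(\Omega)$ should vanish because its image under the Leray isomorphism is exhibited as a \v Cech coboundary by the explicit $0$-cochain constructed from the local $L^2$-solutions on $\wt\Omega\setminus K'$ and $\wt\Omega\setminus K_N$.
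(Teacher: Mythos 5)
Your overall reduction is sound: with the cover $\mathfrak{U}=\{\wt\Omega\setminus K_j\}_{j=1}^N$ and the hypothesis $H^{0,1}_{L^2}(\wt\Omega\setminus K_j)=0$, part (3) of Theorem~\ref{thm-leray} shows that everything comes down to proving $\check H^1(\mathfrak{U},\mathcal{O}_{L^2})=0$. (You do not need, and the paper's Theorem~\ref{thm-leray} does not assert, a full isomorphism $H^{0,1}_{L^2}(\Omega)\cong\check H^1(\mathfrak{U},\mathcal{O}_{L^2})$; the injectivity statement in part (2)/(3) already suffices.) The problem is that you never actually prove the Čech vanishing. Your induction on $N$ reduces it to a ``two-compact Mittag--Leffler step,'' which you then explicitly identify as ``the main obstacle'' and resolve only with ``I expect the resolution to rest on\dots''. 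That is a genuine gap: the heart of the argument is left as a conjecture, and the cutoff-plus-$\dbar$-correction route you sketch does indeed break down for the reason you note.

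What you are missing is that the step you are stuck on is immediate from Hartogs, and this is exactly how the paper handles it (in the proof of Theorem~\ref{thm-new}, which the proof of Corollary~\ref{cor-new3} simply cites). Given a cocycle $(f_{ij})$ with $f_{ij}\in\mathcal{O}_{L^2}(\Omega_{ij})$ and $\Omega_{ij}=\wt\Omega\setminus(K_i\cup K_j)$ connected by hypothesis (3), each $f_{ij}$ extends by the Hartogs phenomenon to $\wt f_{ij}\in\mathcal{O}_{L^2}(\wt\Omega)$ (the extension is $L^2$ because it is holomorphic, hence locally bounded, on a relatively compact neighborhood of the compact set $K_i\cup K_j$). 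The cocycle identity $\wt f_{ij}-\wt f_{jk}+\wt f_{ki}=0$ then holds on all of $\wt\Omega$ by analytic continuation, and the $0$-cochain $u_1=0$, $u_j=\wt f_{1j}|_{\Omega_j}$ gives $\delta u=f$ directly, for all $N$ at once --- no induction, no Mayer--Vietoris, and no splitting of a function on $\wt\Omega\setminus(K'\cup K_N)$ is needed (and even that splitting would be trivial: your own connectedness argument for $\wt\Omega\setminus(K'\cup K_N)$ shows the function extends across the whole hole, so one may take $\phi_1=\wt\phi$, $\phi_2=0$). In short, the architecture of your argument is right, but the decisive step is absent, and the tool that closes it (Hartogs extension across the connected pairwise complements) is already on the table in hypothesis (3).
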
 
 For example, if $\wt{\Omega}$ is a bounded pseudoconvex domain in  $\cx^n$, $n\geq 3$ , and each $K_j\subset\wt{\Omega}$ is the closure of a smoothly bounded convex domain, then the assumptions of Corollary~\ref{cor-new3} are satisfied. To state such a vanishing result for the product situation will require further hypotheses. For simplicity, before stating the somewhat technical full result,  we first state  a special case, when each factor  of the product  is one dimensional:
\begin{cor}\label{1-dim}
Let $K_1,\dots,K_n$ be  compact sets in $\cx$ and $\wt\Omega\Subset\cx^n$ be a bounded domain with $H^{0,1}_{L^2}(\wt{\Omega})=0$. Set $K=K_1\times\dots\times K_n$ and assume that $K\subset \wt{\Omega}$ and that $\Omega= \wt{\Omega}\setminus K$ is not empty and connected. Then  $H^{0,1}_{L^2}(\Omega)$ is Hausdorff, and vanishes if $n\geq 3$.
\end{cor}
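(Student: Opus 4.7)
The plan is to deduce the Hausdorff statement from Theorem~\ref{thm-new} and the vanishing for $n\ge 3$ from Corollary~\ref{cor-new3}, by expressing $K$ as a suitable intersection of compact subsets of $\wt\Omega$.

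Pick a closed polydisc $P=\ol{\Delta_1}\times\cdots\times\ol{\Delta_n}\Subset\wt\Omega$ whose interior contains $K$, and for each $j=1,\ldots,n$ set
\[
\tilde K_j = \ol{\Delta_1}\times\cdots\times\ol{\Delta_{j-1}}\times K_j\times\ol{\Delta_{j+1}}\times\cdots\times\ol{\Delta_n}.
\]
Then $\tilde K_j$ is a compact subset of $\wt\Omega$, and $\bigcap_{j=1}^n \tilde K_j = K_1\times\cdots\times K_n = K$. The hypothesis $H^{0,1}_{L^2}(\wt\Omega)=0$ gives item (1) of Theorem~\ref{thm-new}. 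The connectedness of $\wt\Omega\setminus\tilde K_j$ and of $\wt\Omega\setminus(\tilde K_i\cup\tilde K_j)$ (items (2) and (3) of that theorem) follows from the inclusion $\tilde K_j\subset\cx^{j-1}\times K_j\times\cx^{n-j}$: since $\cx\setminus K_j$ is nonempty and open, any point of the complement can be joined to a point of $\wt\Omega\setminus P$ by a path that first moves the relevant coordinates off the $K_\ell$'s and then leaves $P$, using that $\wt\Omega$ is open and connected.

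The main technical step is to verify that $H^{0,1}_{L^2}(\wt\Omega\setminus\tilde K_j)$ is Hausdorff for each $j$, and vanishes when $n\ge 3$. The annulus $\wt\Omega\setminus\tilde K_j$ has a hole which is itself a product of $n-1$ smooth closed discs with the single one-dimensional factor $K_j\subset\cx$. This places $\wt\Omega\setminus\tilde K_j$ in the scope of the product-hole arguments behind Theorem~\ref{thm-W1} and Corollary~\ref{cor-W1}, whose one-dimensional factor version requires no boundary regularity on the $\cx$-factors (as remarked immediately after the statement of Theorem~\ref{thm-W1}). Combined with Theorem~\ref{thm-hole} (or its generalization Corollary~\ref{cor-newbis} allowing for non-Lipschitz holes), this produces the desired Hausdorffness. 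When $n\ge 3$, the additional transverse complex directions promote this to the full vanishing required for the hypothesis of Corollary~\ref{cor-new3}, via a Hartogs-type extension across the low-codimensional piece of $\tilde K_j$.

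With these inputs in hand, Theorem~\ref{thm-new} delivers the Hausdorffness of $H^{0,1}_{L^2}(\Omega)$, and for $n\ge 3$ Corollary~\ref{cor-new3} gives the stronger vanishing $H^{0,1}_{L^2}(\Omega)=0$. The principal obstacle is the middle step---obtaining Hausdorffness (or vanishing) of $H^{0,1}_{L^2}(\wt\Omega\setminus\tilde K_j)$ for a completely arbitrary compact factor $K_j\subset\cx$, since this must cover Cantor sets, smooth real curves, and other compacta for which $\tilde K_j$ may have empty interior or no regular boundary. This is precisely the simplification afforded by one-dimensional factors: the $L^2$-solvability of $\dbar$ on arbitrary open subsets of $\cx$ allows one to run the factor-by-factor argument underlying Theorem~\ref{thm-W1} without imposing any boundary regularity on the one-dimensional factors $K_j$.
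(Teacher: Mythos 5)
Your reduction of $K$ to the intersection $\bigcap_j \tilde K_j$ of ``cylinder'' compacts is an appealing idea, but the step you yourself flag as the main technical point --- Hausdorffness (and, for $n\geq 3$, vanishing) of $H^{0,1}_{L^2}(\wt\Omega\setminus\tilde K_j)$ --- is not covered by any of the results you invoke, and in fact it is essentially an instance of the very statement being proved. Each $\tilde K_j=\ol{\Delta_1}\times\cdots\times K_j\times\cdots\times\ol{\Delta_n}$ is again a product of one-dimensional compacta with one completely arbitrary factor $K_j\subset\cx$. Theorem~\ref{thm-W1} and Corollary~\ref{cor-W1} require each factor to be a bounded \emph{Lipschitz domain}; the route through Theorem~\ref{thm-hole} (Proposition~\ref{prop-hole3}) likewise requires the hole to be the closure of a Lipschitz domain, and Corollary~\ref{cor-newbis} still needs the compact to be \emph{regular} in the sense of \eqref{eq-regularity}. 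If $K_j$ is a Cantor set, a point, or a curve, then $\tilde K_j$ may have empty interior and none of these results apply; your closing appeal to ``running the factor-by-factor argument underlying Theorem~\ref{thm-W1} without boundary regularity'' is precisely the missing proof, not a reference to an available one. Similarly, the vanishing of $H^{0,1}_{L^2}(\wt\Omega\setminus\tilde K_j)$ needed to feed Corollary~\ref{cor-new3} is asserted via an unspecified ``Hartogs-type extension,'' which is not justified.

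The paper proves the corollary by an entirely different route that avoids this circularity: Corollary~\ref{1-dim} is the special case of Theorem~\ref{thm-cohomology} in which every $n_j=1$, so that hypotheses (2) and (3) of that theorem are automatic (every bounded open subset of $\cx$ has vanishing $H^{0,1}_{L^2}$). Theorem~\ref{thm-cohomology} in turn is proved by covering the model annulus $U\setminus K$ (with $U$ a product of discs) by the sets $\Omega_j=U_1\times\cdots\times R_j\times\cdots\times U_n$, computing $\check H^1(\Uu,\mathcal{O}_{L^2})$ via the closed-range property of the Bergman restriction map (Lemma~\ref{lem-cr1}, which is exactly the point where \emph{arbitrary} compacta in $\cx$ are handled, by a Cauchy-integral argument), applying the $L^2$-Leray theorem and the K\"unneth formula, and finally transferring from $U\setminus K$ to $\wt\Omega\setminus K$ by Proposition~\ref{prop-siu}. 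A secondary concern with your plan: the connectedness of $\wt\Omega\setminus\tilde K_j$ and of $\wt\Omega\setminus(\tilde K_i\cup\tilde K_j)$ required by Theorem~\ref{thm-new} is asserted by a path argument that must stay inside $\wt\Omega$, and the hypotheses only give connectedness of $\wt\Omega\setminus K$; for a general domain $\wt\Omega$ and, say, circle-shaped $K_i,K_j$, this needs a real argument. To repair your proof you would, in effect, have to reprove Propositions~\ref{prop-cechcomputation}, \ref{prop-specialcase} and \ref{prop-siu} for the holes $\tilde K_j$ --- at which point the detour through Theorem~\ref{thm-new} buys nothing.
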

The general result on the vanishing of $H^{0,1}_{L^2}(\Omega)$ for product holes is as follows:
 \begin{thm}\label{thm-cohomology}Consider an annulus $\Omega=\wt{\Omega}\setminus K\subset\cx^n$, where
 $K=K_1\times\dots\times K_N$ is a product of compact sets in $\cx^{n_j}$, $n_j\geq 1$. Assume the following:
\begin{enumerate}
\item  $H^{0,1}_{L^2}(\wt{\Omega})=0$.
\item there is a neighborhood $U$ of $K$ contained in $\wt{\Omega}$ of the form $U= U_1\times \dots \times U_N\Subset \cx^n$, where for each $j$, the open set $U_j$
 is a neighborhood of $K_j$ in $\cx^{n_j}$,  satisfying $H^{0,1}_{L^2}(U_j)=0$.
 \item for each $j$,  $H^{0,1}_{L^2}(U_j\setminus K_j)$ is Hausdorff.
\end{enumerate}
  Then $H^{0,1}_{L^2}(\Omega)$ is Hausdorff, and  if $n\geq 3$, we have
 $H^{0,1}_{L^2}(\Omega)=0$.
\end{thm}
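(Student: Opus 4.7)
The plan is to apply the $L^2$ analogue of Leray's theorem (Theorem~\ref{thm-leray}) to a cover of $\Omega$ adapted to the product structure of the hole $K$, thereby reducing the Dolbeault question to a \v{C}ech calculation in which the product structure is exploited via a K\"unneth-type argument.

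\emph{Constructing the cover.} First I would shrink to obtain product neighborhoods $V'\Subset V\Subset U$ of $K$, with $V=V_1\times\cdots\times V_N$ and $V'=V_1'\times\cdots\times V_N'$, chosen small enough that each $V_j$ inherits from $U_j$ the two properties $H^{0,1}_{L^2}(V_j)=0$ and $H^{0,1}_{L^2}(V_j\setminus K_j)$ Hausdorff. Then cover $\Omega$ by the $N+1$ open sets
\[
\Omega_0=\widetilde\Omega\setminus\overline{V'},\qquad
\Omega_j=V_1\times\cdots\times(V_j\setminus K_j)\times\cdots\times V_N,\quad j=1,\ldots,N.
\]
A point of $\Omega$ outside $\overline{V'}$ lies in $\Omega_0$, and a point of $\Omega\cap\overline{V'}\subset V\setminus K$ has at least one coordinate outside the corresponding $K_j$, hence lies in the matching $\Omega_j$.

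\emph{Checking the Leray hypotheses.} Each non-empty intersection $\Omega_{j_1}\cap\cdots\cap\Omega_{j_k}$, $1\le j_1<\cdots<j_k\le N$, is a product of $k$ factors of the form $V_{j_l}\setminus K_{j_l}$ (Hausdorff $H^{0,1}_{L^2}$) and $N-k$ factors of the form $V_i$ (vanishing $H^{0,1}_{L^2}$). A K\"unneth-type argument in the $L^2$ category then yields Hausdorffness of $H^{0,1}_{L^2}$ on such intersections. For $\Omega_0$ and the overlaps $\Omega_0\cap\Omega_j$, a Mayer--Vietoris comparison with the auxiliary cover $\{\widetilde\Omega\setminus\overline{V'},V\}$ of $\widetilde\Omega$ (whose overlap is the annular product $V\setminus\overline{V'}$), combined with hypothesis~(1) and the vanishing $H^{0,1}_{L^2}(V)=0$ (itself a K\"unneth consequence of hypothesis~(2)), produces the required closed-range statement.

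\emph{Conclusion.} Theorem~\ref{thm-leray} then gives $H^{0,1}_{L^2}(\Omega)\cong\check H^1(\mathfrak U,\bs)$, and Hausdorffness of $H^{0,1}_{L^2}(\Omega)$ follows because the \v{C}ech differential has closed range in the Fr\'echet topology on $L^2$ holomorphic sections over the pairwise intersections. For the vanishing in dimension $n\ge 3$, I would use that the degree $q=1$ satisfies $q\le n-2$, so a Hartogs-type extension across the product compact $K$ (whose validity in the $L^2$ setting is ensured by hypotheses~(2) and (3) combined with the Hausdorffness of $H^{0,1}_{L^2}(V\setminus K)$ obtained above) promotes any $\bar\partial$-closed $f\in L^2_{0,1}(\Omega)$ to a $\bar\partial$-closed $L^2$ form on all of $\widetilde\Omega$, which can then be solved using hypothesis~(1).

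The main obstacle I anticipate is the K\"unneth-type step for products of the form $(V_{j_1}\setminus K_{j_1})\times\cdots\times(V_{j_k}\setminus K_{j_k})\times\prod_{i\notin\{j_l\}}V_i$. Because only Hausdorffness (rather than vanishing) is available on the ``bad'' factors, one must work with completed tensor products of Fr\'echet spaces and use the closed-range hypothesis~(3) carefully in order to guarantee Hausdorffness of the resulting $L^2$-Dolbeault groups. Once this K\"unneth-type closed-range statement is in hand, the \v{C}ech bookkeeping and the Hartogs extension argument for $n\ge 3$ should fall into place.
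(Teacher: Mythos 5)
There are genuine gaps here, the most serious being your inclusion of the outer piece $\Omega_0=\wt\Omega\setminus\ol{V'}$ in the Leray cover. Theorem~\ref{thm-leray} requires $H^{0,1}_{L^2}$ to be Hausdorff on each member of the cover, and for $\Omega_0$ this is an annulus statement of exactly the kind the theorem is trying to establish: by Theorem~\ref{thm-hole} it would force $H^{0,n-1}_{W^1}(V')=0$ for the auxiliary neighborhood $V'$ of $K$, and nothing in hypotheses (1)--(3) produces such a neighborhood (hypothesis (3) concerns $K_j$ itself, not a shrunken open product around it). The paper sidesteps this entirely by never covering $\Omega$ at all: it runs the Leray/\v{C}ech argument only on the small product annulus $\wb=U\setminus K$, using the cover $\Omega_j=U_1\times\dots\times(U_j\setminus K_j)\times\dots\times U_N$, which covers $\wb$ with no outer piece needed; the passage from $\wb$ to $\Omega=\wt\Omega\setminus K$ is then made by a separate cutoff argument (Lemma~\ref{lem-siu} and Proposition~\ref{prop-siu}): given $g\in Z^{0,1}_{L^2}(\Omega)$ whose restriction to $\wb$ is exact, solve there, cut off near $K$, extend the corrected form by zero across $K$, and solve on $\wt\Omega$ using hypothesis (1). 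You should look for an argument of this ``extension of solvability'' type rather than a global cover of $\Omega$.

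Second, the technical core of the proof is never engaged in your sketch: the closed range (and, for $n\geq 3$, the vanishing) of the \v{C}ech coboundary $\delta:\check{C}^0(\Uu,\bs)\to\check{C}^1(\Uu,\bs)$ for the product cover. This is Proposition~\ref{prop-cechcomputation}, and it rests on Lemma~\ref{lem-cr1} (the restriction $\bs(U_j)|_{R_j}$ is closed in $\bs(R_j)$, which in the one-dimensional factors is a Cauchy-integral argument) together with an orthogonal decomposition of $\bs(\Omega_{ij})$ as a Hilbert tensor product; asserting that ``the \v{C}ech differential has closed range'' is precisely what has to be proved. Note also that Theorem~\ref{thm-leray} does not give an isomorphism $H^{0,1}_{L^2}(\Omega)\cong\check H^1(\Uu,\bs)$; it gives only the implication ``\v{C}ech Hausdorff $\Rightarrow$ Dolbeault Hausdorff'' and, when the \v{C}ech group vanishes, an injection of $H^{0,1}_{L^2}(\Omega)$ into the glued local cohomologies. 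Finally, for $n\geq 3$ the vanishing does not come from a soft Hartogs extension: one must first prove $H^{0,1}_{L^2}(\wb)=0$ (the second half of Proposition~\ref{prop-specialcase}, which uses the orthogonality of the images of the restriction maps $H^{0,1}_{L^2}(\Omega_i)\to H^{0,1}_{L^2}(\Omega_{ij})$ in the K\"unneth decomposition), and only then does the injectivity of $H^{0,1}_{L^2}(\Omega)\to H^{0,1}_{L^2}(\wb)$ from Proposition~\ref{prop-siu}(b) yield $H^{0,1}_{L^2}(\Omega)=0$. Hausdorffness of the cohomology of the small annulus, which is all your sketch invokes at this step, is not enough.
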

Note that  hypotheses (2) and (3) are vacuous if each  $n_j=1$, so that Theorem~\ref{thm-cohomology} reduces 
to Corollary~\ref{1-dim} if each of  the factors $K_j$ of $K$ is one-dimensional.  Note also that hypothesis (3) is implied
(thanks to Theorem~\ref{thm-hole}) by the following  statement:\\
{\em

\hspace{00.15in} (3$\,'$) if for some $1\leq j \leq N$ we have    $n_j\geq 2$,  then $K_j=\ol{V}_j$, where $V_j\subset \cx^{n_j}$ is a Lipschitz domain which  satisfies $H^{0,n_j-1}_{W^1}(V_j)=0$.
}


 \subsection{Remarks}

The analogs of Theorem~\ref{thm-hole} and Theorem~\ref{thm-new} continue to hold  for an annulus  in a Stein
 manifold rather than in $\cx^n$, and the proofs generalize easily. Theorem~\ref{thm-W1} and its proof can also be readily
 generalized to  products $V_1\times\dots\times V_N\subset M$ where  for each $j$, there is a Stein manifold $M_j$  of dimension $n_j$ such that  the factor $V_j$ is a relatively compact Lipschitz domain in $M_j$, further  satisfying $H^{0,n_j-1}_{W^1}(V_j)=0$ if $n_j\geq 2$, and $M$ is the product $M_1\times\dots\times M_N$. The other results can also be generalized to domains in Stein manifolds. We prefer to state the results in the case of domains in $\cx^n$ for clarity of exposition. 
 
 One also notes here that the choice of the $L^2$ topology for estimates on the $\dbar$-problem is a matter of 
 convenience rather than necessity. The methods of this paper are based on duality and gluing of local estimates, and can 
 be generalized to estimates in any norm, for example the $L^p$-norm. The required duality results in the $L^p$ setting may be found in \cite{laurentlp}.

 On pseudoconvex domains, the closed range property is a consequence of a priori estimates on the $\dbar$-operator (for bounded domains see \cite{Hor1}, and see \cite{mh15} for  some recent developments regarding unbounded domains).  When $\Omega$ is the annulus between two smooth  strongly pseudoconvex domains in $\Bbb C^n$,    $L^2$ theory    for $\dbar$  is obtained in \cite{Hor1, fk}  for all $(p,q)$-forms   since the boundary satisfies the Andreotti-Grauert condition $Z(q)$ for all $q\neq n-1$.  
 The closed range property for $\dbar$ when  $q=n-1$ also follows in this case (see Proposition 3.1.17 in \cite{fk}).
 When the domain $\Omega$  is the annulus between a bounded  pseudoconvex domain and a $C^2$-smooth pseudoconvex domain  in $\Bbb C^n$,     $L^2$ theory    for $\dbar$ has been  established in the   works \cite{Sh1, Hor2, Sh2}.      Duality between the $L^2$ theory on the annulus and the $W^1$ estimates for $\dbar$ in the hole has been 
 obtained in \cite{LS}.

The classical approach to regularity in the $\dbar$-problem is through the $\dbar$-Neumann problem.  It is difficult to use this method to obtain Sobolev estimates even on  simple Lipschitz domains such as the bidisc or the intersection of two balls. The problem arises because a $(0,1)$-form on the bidisc $\mathbb{D}^2$ which is in the domain of $\dbar^*$  and is smooth up to the boundary must vanish along the \v{S}ilov boundary $b\mathbb{D}\times b\mathbb{D}$, since the complex normal components of the form along two $\cx$-linearly independent directions must vanish. Consequently, 
a priori estimates  do not translate into actual estimates. In fact, one can show that on product domains, the $\dbar$-Neumann operator does not preserve the space of $(0,1)$-forms smooth up to the boundary on the bidisc (see \cite{ehsani-bidisc}).  However, for domains represented as intersections of strongly pseudoconvex domains, with boundaries meeting transversely, one may obtain subelliptic estimates with $\frac{1}{2}$ gain for the canonical solution operator (see \cite{ms1998,hik}). 

\medskip

\subsection{Acknowledgements} We gratefully acknowledge the valuable comments and suggestions of Y.-T. Siu on the topic of this paper. We are also thankful to the referee for helpful suggestions. 
\section{General condition for closed range}
\subsection{Notation and preliminaries} \label{sec-notation} We introduce some notations for the $L^2$-version of the $\dbar$-complex.

For an open set $U\subset \cx^n$, denote
\begin{equation}\label{eq-apq}
 A^{p,q}_{L^2}(U)= \{f\in L^2_{p,q}(U)| \dbar f \in L^2_{p,q+1}(U)\},
\end{equation}
where  $\dbar$ acts in the sense of distributions. Recall that this defines the (weak) maximal realization of the $\dbar$-operator as an unbounded densely defined closed operator on $L^2_{p,q}(U)$.
 Let $Z^{p,q}_{L^2}(U)=\{f\in A^{p,q}_{L^2}(U)|\dbar f =0\}$ and $B^{p,q}_{L^2}(U)=\{\dbar f|f\in A^{p,q-1}_{L^2}(U)\} $ denote
the subspaces of $\dbar$-closed and $\dbar$-exact forms in $A^{p,q}_{L^2}(U)$. Note that $B^{p,q}_{L^2}(U)\subset
Z^{p,q}_{L^2}(U)$ since $\dbar^2=0$. As noted in \eqref{eq-l2coho}, we denote $H^{p,q}_{L^2}(U)= Z^{p,q}_{L^2}(U)/B^{p,q}_{L^2}(U)$.

Denote by $\dbar_c$ the (strong) minimal realization of the $\dbar$-operator as a closed, densely defined, unbounded linear
operator from $L^2_{p,q}(U)$ to $L^2_{p,q+1}(U)$.  The operator $\dbar_c$ is the closure (in graph norm) of the restriction of $\dbar$ to the space of smooth compactly supported forms in $L^2_{p,q}(U)$. The domain of the
operator $\dbar_c$ is a dense subspace of $L^2_{p,q}(U)$,  denoted by $A^{p,q}_{c,L^2}(U)$. Then $A^{p,q}_{c,L^2}(U)$ is a proper subspace of $A^{p,q}_{L^2}(U)$ for bounded domains $U$, and  $\dbar_c$ is the restriction to
 $A^{p,q}_{c,L^2}(U)$ of the operator $\dbar :A^{p,q}_{L^2}(U)\to B^{p,q+1}_{L^2}(U)$.
We denote by
$Z^{p,q}_{c,L^2}(U)=\{f \in A^{p,q}_{c,L^2}(U)| \dbar_c f=0\}$ the space of $\dbar_c$-closed forms,
and by $B^{p,q}_{c, L^2}(U)=\{\dbar_c f| f \in A^{p,q}_{c,L^2}(U)\}$ the space of $\dbar_c$-exact forms. The quotient $H^{p,q}_{c,L^2}(U)= Z^{p,q}_{c,L^2}(U)/B^{p,q}_{c, L^2}(U)$ is the $L^2$-Dolbeault cohomology with minimal realization, analogous to cohomology with compact support. We will also  use the following $L^2$-analog of Serre duality. Consider the pairing $H^{p,q}_{L^2}(U)\times H^{n-p, n-q}_{c, L^2}(U)\to \cx$
given by
\[ ({\rm class}(f), {\rm class}(g))\mapsto \int_U f\wedge g.\]
Then $H^{p,q}_{L^2}(U)$ (resp.   $H^{n-p, n-q}_{c, L^2}(U)$) is Hausdorff if and only if 0 is the only element of $H^{p,q}_{L^2}(U)$ (resp.  $H^{n-p, n-q}_{c, L^2}(U)$)  which is orthogonal to all of $H^{n-p, n-q}_{c, L^2}(U)$ (resp. $H^{p,q}_{L^2}(U)$) (cf. \cite{ChaSh}).

We say that a  nonempty compact subset  $K\subset\rl^n$ is {\em regular} if there is an open set $V$ with
\begin{equation}\label{eq-regularity}
K= \ol{V} \text{\quad and \quad}V= \mathop{\rm interior}(K).
\end{equation}
For  regular compact subset $K\subset \cx^n$, we will use the notation
\begin{equation}\label{eq-hw1}
  H^{p,q}_{W^1}(K)=0\end{equation}
to denote that the following is true: if $f\in W^1_{p,q}(\cx^n)$ is a form with coefficients in the Sobolev space 
$W^1$, and on the set $K$ we have $\dbar f=0$, then there is a form $u\in W^1_{p,q-1}(\cx^n)$ such that on the set $K$ we have $\dbar u=f$. Note that if $K=\ol{V}$ where $V$ is a Lipschitz domain, then $H^{p,q}_{W^1}(K)=0$ if and only if $H^{p,q}_{W^1}(V)=0$. This is so since each function in $W^1(V)$ can be extended to a function in $W^1(\cx^n)$.

\subsection{Proof of Theorem~\ref{thm-hole}} We prove below three propositions 
which together imply Theorem~\ref{thm-hole}:

\begin{prop}\label{prop-hole1}Let $\wt{\Omega}$ be a bounded open set in $\cx^n$, $n\geq 2$, and let $K$ be a regular compact subset of  $\wt{\Omega}$. If  $\Omega=\wt \Omega \setminus  K$ is connected, and such that
 $H_{L^2}^{0,1}(\Omega)$ is Hausdorff. Then $H^{0, n-1}_{W^1}(K)=0$.
\end{prop}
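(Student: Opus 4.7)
The plan is to transfer the $\dbar$-problem on $K$ to an $L^2$-problem on $\Omega$ via a cutoff, solve the latter using the hypothesis together with $L^2$-Serre duality, and then realize the outcome as a global $W^1$ primitive via the Bochner-Martinelli-Koppelman kernel on $\cx^n$.

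Given $f\in W^1_{0,n-1}(\cx^n)$ with $\dbar f=0$ on $K$, first localize: pick $\chi\in\ci_c(\wt\Omega)$ with $\chi\equiv 1$ on an open neighborhood $U$ of $K$ (possible since $K\Subset\wt\Omega$). Then $\chi f\in W^1_{0,n-1}(\cx^n)$ has compact support in $\wt\Omega$, and $g:=\dbar(\chi f)=\dbar\chi\wedge f\in L^2_{0,n}(\cx^n)$ vanishes on $U$, so that $g|_\Omega\in L^2_{0,n}(\Omega)$ has compact support in $\Omega$.

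The main step is to produce $v\in A^{0,n-1}_{c,L^2}(\Omega)$ with $\dbar_c v=g|_\Omega$. Applying the closed-range theorem to $\dbar\colon L^2_{0,0}(\Omega)\to L^2_{0,1}(\Omega)$ and passing to its Hilbert-space adjoint, combined with the natural isomorphism of the $(0,\bullet)$- and $(n,\bullet)$-Dolbeault complexes induced by wedge product with $dz_1\wedge\cdots\wedge dz_n$, shows that the assumed Hausdorffness of $H^{0,1}_{L^2}(\Omega)$ is equivalent to closed range of $\dbar_c\colon A^{0,n-1}_{c,L^2}(\Omega)\to L^2_{0,n}(\Omega)$, i.e., Hausdorffness of $H^{0,n}_{c,L^2}(\Omega)$. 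By the $L^2$-Serre duality recalled in Section~\ref{sec-notation}, it then suffices to verify that $\int_\Omega g\wedge\sigma=0$ for every holomorphic $(n,0)$-form $\sigma\in L^2(\Omega)$. Since $n\geq 2$ and $K\Subset\wt\Omega$, Hartogs' phenomenon extends $\sigma$ holomorphically to $\wt\Omega$, and as $g$ vanishes on $U\supset K$,
\[
\int_\Omega g\wedge\sigma=\int_{\wt\Omega}\dbar(\chi f)\wedge\sigma=\int_{\wt\Omega}\dbar(\chi f\wedge\sigma)=0
\]
by Stokes, since $\chi f\wedge\sigma$ is compactly supported in $\wt\Omega$. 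Hence such a $v$ exists; its extension by zero $\tilde v\in L^2_{0,n-1}(\cx^n)$ satisfies $\dbar\tilde v=g$ on $\cx^n$ and vanishes on the interior $V$ of $K$, so that $F:=\chi f-\tilde v\in L^2_{0,n-1}(\cx^n)$ is compactly supported, $\dbar$-closed on $\cx^n$, and agrees with $f$ on $V$.

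Finally, convolving $F$ against the Bochner-Martinelli-Koppelman kernel on $\cx^n$ (homogeneous of degree $-(2n-1)$ on $\rb^{2n}$, essentially the Riesz potential $I_1=(-\Delta)^{-1/2}$) produces a form $u$ with $\dbar u=F$ on $\cx^n$. Since $I_1$ maps $L^2(\rb^{2n})$ continuously into $W^1(\rb^{2n})$ when $2n\geq 3$, and the compact support of $F$ forces $|z|^{-(2n-1)}$ decay of $u$ and its first derivatives at infinity, we obtain $u\in W^1_{0,n-2}(\cx^n)$. Restricting to $V$, where $F=f$, gives $\dbar u=f$ on $K$, as required. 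The main obstacle is the middle step: extracting Hausdorffness of $H^{0,n}_{c,L^2}(\Omega)$ from the assumption on $H^{0,1}_{L^2}(\Omega)$ via the adjoint and $\omega_0$-intertwining arguments and then running the Hartogs-Stokes orthogonality check; once this is in hand, both the cutoff construction and the Bochner-Martinelli-Koppelman regularity step are routine.
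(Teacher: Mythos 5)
Your argument is correct and follows essentially the same route as the paper's proof: cut off $f$, show the resulting $(0,n)$-form is orthogonal to $Z^{n,0}_{L^2}(\Omega)$ via Hartogs extension and Stokes, use the equivalence of the Hausdorff property for $H^{0,1}_{L^2}(\Omega)$ and for $H^{0,n}_{c,L^2}(\Omega)$ together with $L^2$-Serre duality to solve $\dbar_c v=\dbar(\chi f)$ on $\Omega$, extend by zero, and solve the remaining compactly supported problem on $\cx^n$ with $W^1$ (interior) regularity, which the paper invokes abstractly where you use the Bochner--Martinelli--Koppelman kernel. One small slip: $\dbar(\chi f)=\dbar\chi\wedge f+\chi\,\dbar f$ vanishes on $K$ but not necessarily on all of $U$, since $\dbar f=0$ is only assumed on $K$; this is harmless because your orthogonality computation and the extension-by-zero step only use vanishing on $K$ (and for top-degree forms every element of $L^2_{0,n}(\Omega)$ lies in the minimal domain, so the claimed compact support in $\Omega$ is not needed).
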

\begin{proof} Let $f\in W^1_{0,n-1}(\cx^n)$ be a form with $W^1$ coefficients, and  assume that $\dbar f=0$ on $K$. We need to show that there is a $u\in W^1_{0,n-2}(\cx^n)$ such that $\dbar u =f$ holds on $K$.
After multiplying with an appropriate smooth compactly supported function, we may assume that ${f}$ has compact support in $\wt{\Omega}$. 
Then the $(0,n)$-form $\dbar {f}$ lies in $ A^{0,n}_{c,L^2}(\wt{\Omega})$ and vanishes on $K$. We claim  that for each holomorphic $(n,0)$-form $\theta \in Z^{n,0}_{L^2}(\Omega)$  on
$\Omega$ we have
\begin{equation}\label{eq-orthogonal}
 \int_\Omega \theta\wedge \dbar {f}=0.
\end{equation}
Indeed, since $\Omega$ is connected, by the Hartogs extension phenomenon, the form $\theta$ extends through the hole $K$  to give a holomorphic
form $\wt{\theta}\in Z^{n,0}_{L^2}(\wt{\Omega})$ and
\[ \int_\Omega \theta  \wedge \dbar {f}= \int_{\wt{\Omega}}\wt{\theta}\wedge \dbar {f}=(-1)^n\int_{\wt{\Omega}}\dbar \wt{\theta}\wedge {f} =0.\]
Now by  \cite[Lemma~3]{ChaSh}, the hypothesis $H_{L^2}^{0,1}(\Omega)$ is Hausdorff is equivalent to the fact that
$H^{n,n}_{c,L^2}(\Omega)$ is Hausdorff, which in turn is equivalent to $H^{0,n}_{c,L^2}(\Omega)$ being Hausdorff.
Now \eqref{eq-orthogonal} shows that under the Serre pairing $H^{n,0}_{L^2}(\Omega)\times H^{0,n}_{c,L^2}(\Omega)\to \cx$, the cohomology class  of $\dbar f$ is orthogonal to all of $H^{n,0}_{L^2}(\Omega)$,
and therefore there is a $g\in  A^{0,n-1}_{c,L^2}(\Omega)$ such that $\dbar {f}=\dbar g$. Let $\wt{g}$ be the extension of $g$ by 0 to all of $\cx^n$. Then  $\wt{g}\in A^{0,n-1}_{c,L^2}(\cx^n)$  and has compact support. Therefore, the form ${f}-\wt{g}$ is a compactly supported $\dbar$-closed form in $Z^{0,n-1}_{c, L^2}(\cx^n)$.
Thus there is a compactly supported $(0,n-2)$-form $u$ on $\cx^n$ such that $\dbar u={f}-\wt{g}$,
and by interior regularity of the $\dbar$-problem, we may assume that $u$ has coefficients in $W^1$ when restricted to a neighborhood of $K$.
 Noting that by construction $\wt{g}$ vanishes on $K$, we see that  $\dbar u = f$, so that  $H^{0, n-1}_{W^1}(K)=0$.
 \end{proof}
 For a general regular compact set $K$, the condition $H^{0,n-1}_{W^1}(K)=0$ is only a statement about existence of 
solutions of the $\dbar$ problem, and does not lead to any estimates for these solutions. However, when $H^{0,n-1}_{W^1}(K)=0$ can be  interpreted as the vanishing of a cohomology defined by a densely-defined closed realization of the operator $\dbar$ acting between Banach spaces, by the open mapping theorem, we do obtain estimates for the $\dbar$-problem. For example, when $K=\ol{V}$, where $V$ is a Lipschitz domain, the condition $H^{0,n-1}_{W^1}(K)=0$ implies that there is a constant $C>0$ such that for each $f\in W^1_{0,n-1}(V)$ such that $\dbar f=0$ as distributions, there is a $u\in W^1_{0,n-2}(V)$ such that $\dbar u =f$ in the sense of distributions, and $\norm{u}_{W^1} \leq C \norm{f}_{W^1}$.
\begin{prop}\label{prop-hole2}Let $\wt{\Omega}$ be a bounded open set in $\cx^n$, $n\geq 2$, and let $K$ be a compact subset of  $\wt{\Omega}$. If  $\Omega=\wt \Omega \setminus  K$ is connected, and such that
 $H_{L^2}^{0,1}(\Omega)$ is Hausdorff. Then $H^{0,1}_{L^2}(\wt{\Omega})$ is Hausdorff.
\end{prop}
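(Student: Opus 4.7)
The plan is to show directly that the image of $\dbar$ is closed in $L^2_{0,1}(\wt\Omega)$ by producing an explicit primitive for any $f$ in its closure. So I would start with $f \in Z^{0,1}_{L^2}(\wt\Omega)$ together with a sequence $u_j \in L^2(\wt\Omega)$ such that $\dbar u_j \to f$ in $L^2$. Restriction to $\Omega$ yields $\dbar(u_j|_\Omega) \to f|_\Omega$ in $L^2_{0,1}(\Omega)$, so $f|_\Omega$ lies in the closure of $B^{0,1}_{L^2}(\Omega)$. The hypothesis that $H^{0,1}_{L^2}(\Omega)$ is Hausdorff then furnishes a $v \in L^2(\Omega)$ with $\dbar v = f|_\Omega$.

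The next step is to glue $v$ back to a candidate primitive on all of $\wt\Omega$ using a cutoff. I would choose open sets $U_1 \Subset U_2 \Subset \wt\Omega$ with $K \subset U_1$, and a smooth function $\chi$ on $\cx^n$ with $\chi \equiv 0$ on $U_1$ and $\chi \equiv 1$ off $U_2$. Then $\chi v$, extended by zero across $K$, belongs to $L^2(\wt\Omega)$, and the form $g := f - \dbar(\chi v) = (1-\chi)f - \dbar\chi \wedge v$ is $\dbar$-closed, lies in $L^2$, and vanishes off $U_2$; hence $g$ has compact support in $\wt\Omega$. Extending $g$ by zero gives a compactly supported $\dbar$-closed $L^2$ $(0,1)$-form on all of $\cx^n$.

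The concluding step is to solve $\dbar w = g$ globally on $\cx^n$ for some $w \in L^2(\cx^n)$; once this is done, $u := \chi v + w|_{\wt\Omega}$ will satisfy $u \in L^2(\wt\Omega)$ and $\dbar u = \dbar(\chi v) + g = f$, so $f \in B^{0,1}_{L^2}(\wt\Omega)$ and the proposition follows. The existence of such a $w$ is the vanishing $H^{0,1}_{c,L^2}(\cx^n)=0$, which uses the Hartogs phenomenon in dimension $n\geq 2$: one solves $\dbar w_0 = g$ on a large ball by H\"ormander's theorem, observes that $w_0$ is holomorphic outside $\supp g$, and subtracts the Hartogs extension of $w_0$ back across that compact set to arrive at a compactly supported $L^2$ solution. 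I expect this Hartogs step to be the only non-routine ingredient; the rest is a cutoff computation, and the argument makes no regularity assumption on either $K$ or $\partial\wt\Omega$.
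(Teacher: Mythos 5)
Your proof is correct, and the second half (cut off $v$ near $K$, observe that $f-\dbar(\chi v)$ is a compactly supported $\dbar$-closed $L^2$ form on $\cx^n$, solve there with compact support via Hartogs, and add back) is essentially identical to the paper's gluing step, up to writing $\chi$ for what the paper calls $1-\chi$. Where you genuinely diverge is in the first half: the paper never works with the norm-closure of the range directly, but instead uses the $L^2$ Serre duality of Chakrabarti--Shaw to recast Hausdorffness of $H^{0,1}_{L^2}(\wt\Omega)$ as the statement that any $f\in Z^{0,1}_{L^2}(\wt\Omega)$ pairing to zero against all of $Z^{n,n-1}_{c,L^2}(\wt\Omega)$ is exact; such an $f$ automatically pairs to zero against $Z^{n,n-1}_{c,L^2}(\Omega)$ (extend by zero), and Hausdorffness of $H^{0,1}_{L^2}(\Omega)$ is again invoked through duality to produce the primitive $g$ on $\Omega$. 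You instead take $f$ to be a norm-limit of $\dbar u_j$, restrict to $\Omega$, and use closedness of $B^{0,1}_{L^2}(\Omega)$ directly to get $v$. The two routes are dual to each other and reach the same intermediate object; yours is more elementary in that it avoids the duality machinery entirely, while the paper's phrasing fits the duality framework it uses throughout (e.g.\ in Propositions~\ref{prop-hole1} and \ref{prop-hole3}). One small point worth making explicit in your write-up: the distributional identity $\dbar(\chi v)=\dbar\chi\wedge v+\chi f$ holds on all of $\wt\Omega$ (not just on $\Omega$) because $\chi v$ vanishes identically on the neighborhood $U_1$ of $K$, so the local computations on $U_1$ and on $\Omega$ glue; this is exactly what makes $\chi v$ lie in $A^{0,0}_{L^2}(\wt\Omega)$.
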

\begin{proof}

Assuming again that $H_{L^2}^{0,1}(\Omega)$ is Hausdorff, we now show that
$H^{0,1}_{L^2}(\wt{\Omega})$ is Hausdorff. Let $f\in Z^{0,1}_{L^2}(\wt{\Omega})$ be such that for each $\phi \in Z^{n,n-1}_{c, L^2}(\wt{\Omega})$, we have $\int_{\wt{\Omega}}f\wedge \phi=0$. We have to show that   $f$ is $\dbar$-exact, and then the claim will follow by Serre duality.

Now if $\psi\in Z^{n,n-1}_{c,L^2}(\Omega)$, then we clearly have $\int_{\Omega}f\wedge \psi=0$, so that the fact that
$H^{0,1}_{L^2}(\Omega)$ is Hausdorff implies that there is a $g\in A^{0,0}_{L^2}(\Omega)$ satisfying $\dbar g =f$. Let $\chi\in \mathcal{C}_0^\infty(\cx^n)$ be a cutoff which  is identically equal to 1  on a neighborhood of $K$ and is supported in a compact subset of $\wt{\Omega}$. We consider the $(0,1)$-form $\theta$  on $\cx^n$ defined by extending   by zero the compactly supported form  $f- \dbar\left((1-\chi)g\right)$ on $\Omega$. Writing $f- \dbar\left((1-\chi)g\right)=\chi f +\dbar\chi\wedge g$, we see that
$\theta$ is a compactly supported form in $L^2_{0,1}(\cx^n)$.  Therefore there is a compactly supported $L^2$ function $u$  on $\cx^n$ such that $\dbar u =\theta$. Then $(1-\chi) g +u \in A^{0,0}_{L^2}(\wt{\Omega})$, and $\dbar\left((1-\chi) g+u\right)= f$.
\end{proof}

\begin{prop} \label{prop-hole3}
Let $V\Subset\wt\Omega$ be  bounded open subsets of $\cx^n$, $n\geq 2$. Assume $V$ has Lipschitz  boundary and $\Omega=\wt \Omega \setminus \ol V$ is connected, $H^{0, n-1}_{W^1}(V)=0$ and $H^{0,1}_{L^2}(\wt{\Omega})$ is Hausdorff. Then  $H_{L^2}^{0,1}(\Omega)$ is Hausdorff.
 \end{prop}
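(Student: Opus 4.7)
The plan is to run the duality argument of Propositions~\ref{prop-hole1} and~\ref{prop-hole2} in the opposite direction. By the equivalence \cite[Lemma~3]{ChaSh} already used in Proposition~\ref{prop-hole1}, the Hausdorff-ness of $H^{0,1}_{L^2}(\Omega)$ reduces to the Hausdorff-ness of $H^{0,n}_{c,L^2}(\Omega)$. So it suffices to start with a compactly supported $\phi\in L^2_{0,n}(\Omega)$ satisfying $\int_\Omega\theta\wedge\phi=0$ for every $\theta\in Z^{n,0}_{L^2}(\Omega)$ and produce $u\in A^{0,n-1}_{c,L^2}(\Omega)$ with $\dbar_c u=\phi$.

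First I extend $\phi$ by zero to $\wt\phi\in L^2_{0,n}(\wt\Omega)$, which is compactly supported. Since $n\geq 2$ and $\Omega$ is connected, any $\wt\theta\in Z^{n,0}_{L^2}(\wt\Omega)$ restricts to $Z^{n,0}_{L^2}(\Omega)$, so the orthogonality hypothesis gives $\int_{\wt\Omega}\wt\theta\wedge\wt\phi=0$. The hypothesis that $H^{0,1}_{L^2}(\wt\Omega)$ is Hausdorff (equivalently $H^{0,n}_{c,L^2}(\wt\Omega)$ is Hausdorff) then produces $\wt u\in A^{0,n-1}_{c,L^2}(\wt\Omega)$ with $\dbar_c\wt u=\wt\phi$. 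Because $\phi$ has compact support in $\Omega$ at positive distance from $bV$, there is an open neighborhood $V_0\subset\wt\Omega$ of $\ol V$ on which $\wt\phi\equiv 0$, so $\wt u$ is $\dbar$-closed on $V_0$.

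The final step is to correct $\wt u$ on $V_0$ so that the result vanishes near $\ol V$. Concretely, I would seek $\beta$, an $(0,n-2)$-form defined on a neighborhood of $\ol V$, with $\dbar\beta=\wt u$ on a smaller such neighborhood. Given $\beta$, a cutoff $\chi$ identically $1$ near $\ol V$ and compactly supported in $V_0$ gives $u:=\wt u-\dbar(\chi\beta)$, which lies in $A^{0,n-1}_{c,L^2}(\Omega)$ (it vanishes near $\ol V$, has compact support in $\wt\Omega$, so in $\Omega$) and satisfies $\dbar_c u=\dbar\wt u=\wt\phi|_\Omega=\phi$. The construction of $\beta$ is the step that invokes $H^{0,n-1}_{W^1}(V)=0$: using the interior regularity of $\wt u$ (as a $\dbar$-closed form on $V_0$) together with the Lipschitz regularity of $bV$, and, as in the end of the proof of Proposition~\ref{prop-hole1}, a compactly supported Bochner--Martinelli solve on $\cx^n$, one replaces $\wt u$ near $\ol V$ by a form $F\in W^1_{0,n-1}(\cx^n)$ that is $\dbar$-closed on $\ol V$ and agrees with $\wt u$ on $\ol V$ modulo a $\dbar$-exact correction. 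Applying $H^{0,n-1}_{W^1}(V)=0$ to $F$ yields $\beta_0\in W^1_{0,n-2}(\cx^n)$ with $\dbar\beta_0=F$ on $\ol V$, and $\beta:=\beta_0$ does the job.

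The main obstacle is the regularity passage in the last paragraph: $\wt u$ is a priori only $L^2$ near $bV$, while the hypothesis $H^{0,n-1}_{W^1}(V)=0$ demands a $W^1$ datum. Bridging this gap rigorously is the crux of the proof and relies critically on the Lipschitz regularity of $V$, on the interior regularity afforded by $\dbar\wt u=0$ on $V_0$, and on the freedom to modify $\wt u$ by a $\dbar$-exact correction (constructed via the compactly supported $\dbar$-Poincar\'e lemma on $\cx^n$) without spoiling the equation $\dbar_c\wt u=\wt\phi$.
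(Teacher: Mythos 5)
Your overall architecture coincides with the paper's: reduce via $L^2$ Serre duality to showing that every $f\in Z^{0,n}_{c,L^2}(\Omega)$ orthogonal to $Z^{n,0}_{L^2}(\Omega)$ under the Serre pairing is $\dbar_c$-exact, extend by zero, use Hartogs to transfer the orthogonality to $\wt\Omega$, solve there, and then use $H^{0,n-1}_{W^1}(V)=0$ to correct the solution across $\ol V$. However, there is a genuine gap at exactly the point you yourself flag as ``the crux'': the passage from the $L^2$ solution $\wt u$ on $\wt\Omega$ to a $W^1$ datum to which the hypothesis on $V$ can be applied. The mechanism you propose --- interior regularity of $\wt u$ \emph{as a $\dbar$-closed form} on a neighborhood $V_0$ of $\ol V$ --- is not valid: for forms of degree $n-1\geq 1$ the operator $\dbar$ alone is not elliptic, and a $\dbar$-closed $(0,n-1)$-form with $L^2$ coefficients need not lie in $W^1_{\mathrm{loc}}$ (consider $\dbar h$ for $h\in L^2$ with $\dbar h\in L^2$ but $h\notin W^2_{\mathrm{loc}}$). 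The Bochner--Martinelli ``correction'' that is supposed to bridge this is only gestured at and does not supply the missing derivative. The paper's resolution is different and simpler: the regularity comes not from the closedness of the solution near $\ol V$ but from the \emph{choice} of solution on $\wt\Omega$ --- taking $g$ to be, say, the canonical solution of $\dbar g=\wt f$, the elliptic system $(\dbar,\dbar^*)$ with $L^2$ right-hand side yields $g\in W^1$ on compact subsets of $\wt\Omega$, in particular near $\ol V$; since $\dbar g=\wt f=0$ on $V$, the hypothesis $H^{0,n-1}_{W^1}(V)=0$ then applies directly to $g|_V$.

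Two further points. First, you restrict at the outset to compactly supported $\phi$; since in top degree $Z^{0,n}_{c,L^2}(\Omega)$ is all of $L^2_{0,n}(\Omega)$, the duality criterion requires handling arbitrary $f\in L^2_{0,n}(\Omega)$ orthogonal to $Z^{n,0}_{L^2}(\Omega)$, and density does not rescue this (it would only place $f$ in the closure of $B^{0,n}_{c,L^2}(\Omega)$, which is precisely what is to be shown closed). The restriction is easy to drop --- extension by zero works for any $L^2$ form, and one needs $\dbar g=0$ only on $V$ itself, not on a neighborhood --- but your regularity step leans on the neighborhood $V_0$, so the two issues compound. Second, in the last step your corrected form need not be compactly supported in $\wt\Omega$ (the solution produced by Hausdorffness of $H^{0,n}_{c,L^2}(\wt\Omega)$ lies in the minimal domain but is not compactly supported), and since the hypothesis only gives $\dbar h=g$ on $V$ rather than on a neighborhood of $\ol V$, the corrected form merely vanishes on $V$; concluding that its restriction to $\Omega$ lies in $A^{0,n-1}_{c,L^2}(\Omega)$ is where the Lipschitz regularity of $bV$ genuinely enters, via \cite[Proposition~2]{ChaSh}, rather than through a support argument.
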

 \begin{proof}
 
   First note that thanks to \cite[Lemma~3]{ChaSh}, the hypothesis that $H^{0,1}_{L^2}(\wt{\Omega})$ is Hausdorff is equivalent to the condition that $H^{0,n}_{c,L^2}(\wt{\Omega})$ is Hausdorff, and the conclusion that $H^{0,1}_{L^2}({\Omega})$ is Hausdorff is equivalent to  $H^{0,n}_{c,L^2}({\Omega})$  being Hausdorff. We therefore start with an $f\in Z^{0,n}_{c,L^2}(\Omega)$  such that for each $\phi \in Z^{n,0}_{L^2}(\Omega)$ we have $\int_\Omega f\wedge\phi=0$, and want to show that there is a $u$ in $A^{0,n-1}_{c,L^2}(\Omega)$ such that $\dbar u =f$.

By Hartogs' phenomenon, each element of $Z^{n,0}_{L^2}(\Omega)$ extends to an element of  $ Z^{n,0}_{L^2}(\wt{\Omega})$. It follows that $\int_{\wt{\Omega}} \wt{f}\wedge \phi=0$, where $\wt{f}$ is the extension of $f$ by 0 to
all of $\wt{\Omega}$. Since by hypothesis, $H^{0,n}_{c,L^2}(\wt{\Omega})$ is Hausdorff, there is a $g\in A^{0,n-1}_{c, L^2}(\wt{\Omega})$ such that $\dbar g= \wt f$, and using interior regularity of $\dbar$, we may assume that $g$ has $W^1$ coefficients in a neighborhood of $\ol{V}$. Note that by the definition of $\wt{f}$, we have $\dbar g=0$ on $V$.  Invoking the hypothesis $H^{0,n-1}_{W^1}(V)=0$,
we see that there is an $h\in W^1_{0,n-2}(V)$ such that $\dbar h =g$ on $V$. Since $V$ is Lipschitz, we may extend $h$ 
as a form with $W^1$ coefficients on the whole of $\cx^n$.  Multiplying by a smooth cutoff, we may further
assume that $h$ has compact support in $\wt{\Omega}$.
Then $u=(g-\dbar {h})|_\Omega$ is a form on $\Omega$ whose extension by zero to $\wt{\Omega}$ belongs to 
the domain of $\dbar$ on $\wt{\Omega}$. Since $V$ has Lipschitz boundary, it follows by \cite[Proposition~2]{ChaSh} that $u$ belongs to $A^{0,n-1}_{c,L^2}(\Omega)$, and $\dbar u= \dbar g = f$ on $\Omega$.  
\end{proof}


\subsection{The two-dimensional case}

 For a domain $D\subset\cx^n$, denote by $Z^{0,1}(\overline{D})$ the space
$\mathcal{C}^\infty_{0,1}(\overline{D})\cap \ker \dbar$  of $\dbar$-closed forms which are $\mathcal{C}^\infty$-smooth up to the 
boundary  on ${D}$, and let $\wt{B}^{0,1}(D)$ denote the subspace of $Z^{0,1}(\overline{D})$ consisting of those 
forms $g$ smooth up to the boundary such that there is a {\em distribution} $u\in \mathcal{D}'(D)$  such that $\dbar u =g$ on $D$.  We consider the quotient
\begin{equation}\label{eq-htilde}
\wt{H}^{0,1}(D)=Z^{0,1}(\overline{D})/\wt{B}^{0,1}(D).
\end{equation}
The proof of the following result  is known   (see Fu  \cite{Fu}):
\begin{thm}\label{thm-fu} Let $D\subset\cx^2$ be a domain such that ${\rm interior}(\ol D)=D$. If the vector space $\wt{H}^{0,1}(D)$ of \eqref{eq-htilde} is finite dimensional, then $D$ is pseudoconvex. 
\end{thm}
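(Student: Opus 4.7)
The plan is to prove the contrapositive: assume $D$ is not pseudoconvex and exhibit an infinite-dimensional family of classes in $\wt H^{0,1}(D)$. The hypothesis $\operatorname{interior}(\ol D)=D$ rules out spurious boundary points, so non-pseudoconvexity is witnessed at some genuine boundary point $p\in bD$. The overall strategy is Laufer-style: convert the Hartogs extension phenomenon near $p$ into an infinite-dimensional space of linearly independent Dolbeault classes, contradicting the finite dimensionality of $\wt H^{0,1}(D)$.

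First I would translate non-pseudoconvexity into a concrete Hartogs figure. Using the Kontinuit\"atssatz characterization, pick $p\in bD$ at which local pseudoconvexity fails, and choose biholomorphic coordinates near $p$ so that $D$ contains a standard Hartogs figure $H$ with hull a polydisc $P\ni p$ satisfying $P\not\subset D$, and such that every $f\in\oc(H)$ extends uniquely to $\tilde f\in\oc(P)$. Fix a cutoff $\chi\in\mathcal{C}_0^\infty(P)$ with $\chi\equiv 1$ on a neighborhood of $p$, and for each $g\in\oc(P)$ form the compactly supported smooth $\dbar$-closed $(0,1)$-form
\[
\omega_g \;=\; g\,\dbar\chi \qquad\text{on }\cx^2,
\]
extended by zero outside $P$. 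Its restriction $\omega_g|_D\in Z^{0,1}(\ol D)$ defines a linear map
\[
\Phi\colon\oc(P)\longrightarrow\wt H^{0,1}(D),\qquad g\mapsto[\omega_g|_D],
\]
from an infinite-dimensional Fr\'echet space into a space that, by hypothesis, is finite-dimensional.

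The main step is to show that $\Phi$ has infinite-dimensional image, yielding the contradiction. Suppose $\omega_g|_D=\dbar u$ for a distribution $u\in\mathcal D'(D)$; interior ellipticity of $\dbar$ gives $u\in\mathcal C^\infty(D)$, and $u-\chi g$ is holomorphic on $P\cap D$. Restricting to $H\subset P\cap D$ and invoking the Hartogs extension, $(u-\chi g)|_H$ extends to $\tilde v\in\oc(P)$. Gluing $\tilde u:=u$ on $D$ with $\tilde u:=\tilde v+\chi g$ on $P$ produces a smooth function on $D\cup P$ whose $\dbar$ equals $\omega_g$ there. Iterating this procedure along the envelope of holomorphy, distributional solvability of $\dbar u=\omega_g|_D$ on $D$ is equivalent to $g$ lying in a fixed linear subspace of $\oc(P)$, namely the range of a Hartogs-type extension operator from a smooth-up-to-boundary subspace. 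Consequently $\ker\Phi$ has finite codimension in $\oc(P)$; since $\oc(P)$ is infinite-dimensional, so is the image of $\Phi$, contradicting the assumption that $\wt H^{0,1}(D)$ is finite-dimensional. Therefore $D$ is pseudoconvex.

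The main obstacle is the quantitative part of this iteration: parlaying the existence of a distributional solution on $D$ into a rigid continuous linear constraint on $g\in\oc(P)$. One must check that the assignment $g\mapsto\tilde v$ is well-defined on a subspace of finite codimension (no monodromy across the family of Hartogs discs), and that the resulting functionals on $\oc(P)$ are continuous in the Fr\'echet topology so that the dimension comparison actually bites. This is precisely where the two-dimensional hypothesis enters: in $\cx^2$ the Hartogs extension from $H$ to $P$ is schlicht and the analytic discs foliate cleanly, which allows one to replace distributional solvability on $D$ by smooth solvability on the enlarged domain $D\cup P$ and to unambiguously compare cohomology dimensions.
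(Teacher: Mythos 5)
Your construction of the classes is fatally flawed: the map $\Phi$ you define is identically zero. Since $\chi\in\mathcal{C}_0^\infty(P)$ and $g\in\oc(P)$, the product $\chi g$ (extended by zero) is a smooth, compactly supported function on all of $\cx^2$, and $\dbar(\chi g)=g\,\dbar\chi+\chi\,\dbar g=g\,\dbar\chi=\omega_g$ everywhere. Hence $\omega_g|_D=\dbar\bigl((\chi g)|_D\bigr)$ is exact with a globally smooth potential, so $[\omega_g]=0$ in $\wt{H}^{0,1}(D)$ for every $g$, and no contradiction with finite dimensionality can come from this family. Separately, even granting your "main step," the logic is inverted: if distributional solvability of $\dbar u=\omega_g|_D$ were equivalent to $g$ lying in a subspace of \emph{finite} codimension, then $\ker\Phi$ would have finite codimension and the image $\oc(P)/\ker\Phi$ would be \emph{finite}-dimensional — exactly the opposite of what you claim to conclude. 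To run a dimension-count argument you must produce forms whose exactness imposes infinitely many independent conditions, which your cutoff forms cannot do.

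The paper's proof (following Fu) avoids this by using forms with a genuine singularity at a point of the envelope lying \emph{outside} $\ol D$: after normalizing so that $0\in\wt D\setminus\ol D$ (this is where ${\rm interior}(\ol D)=D$ is really used — it guarantees such a point exists off the closure, so the forms are smooth up to $b D$), one takes the Bochner–Martinelli–type forms $B_k=(k+1)!\,\ol z_2^k\,(\ol z_1\,d\ol z_2-\ol z_2\,d\ol z_1)/|z|^{2(k+2)}$, which satisfy $\dbar u_k=-z_1B_k$ with $u_k$ explicit and singular at $0$. If $\dim\wt{H}^{0,1}(D)<N$, some nontrivial combination $B=\sum a_kB_k$ admits a distributional potential $v$ on $D$, and then $F=z_1v+\sum a_ku_k$ is holomorphic on $D$; its forced extension to $\wt D$ restricts on the plane $\{z_1=0\}$ to $\sum a_k k!/z_2^{k+1}$, which is singular at $0\in\wt D$ — a contradiction. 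The essential ingredient your proposal is missing is precisely this family of nontrivial, explicitly non-extendable cocycles; a cutoff times a function holomorphic on the whole hull can never supply it.
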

\begin{proof}We repeat the proof of \cite{Fu}.
Assume $D$ is not pseudoconvex, then there exists a domain $\wt D$ strictly containing $D$ such that any holomorphic function on $D$ extends holomorphically to $\wt D$. Since interior($\ol D$)$=D$, after a translation and a rotation we may assume that $0\in\wt D\setminus\ol D$ and there exists a point $z_0$ in the intersection of the plane $\{(z_1,z_2)\in\cb^2~|~z_1=0\}$ with $D$, which belongs to the same connected component  of that plane with $\wt D$.

For an integer $k\geq 0$, we consider the smooth  $(0,1)$-form $B_k$ on $\cx^2\setminus \{0\}$ derived from the Bochner-Martinelli kernel and given by 
\[ B_k(z_1,z_2) = (k+1)!\cdot\ol z_2^k\cdot\frac{\ol z_1~d\ol z_2-\ol z_2~d\ol z_1}{|z|^{2(k+2)}}.\]
These forms  are $\dbar$-closed, and if we define
\[ u_k(z_1,z_2)=k! \cdot\frac{\ol z_2^{(k+1)}}{|z|^{2(k+1)}},\]
we have on $\cx^2\setminus\{0\}$:
\[ \dbar (u_k)= -z_1B_k.\]
Note that the restriction of $B_k$ to $D$ belongs to $Z^{0,1}(\ol{D})$. Let $N$ be an integer such that $N>\dim \wt{H}^{0,1}(D)$. Then there exists a non trivial linear combination $B=\sum_{k=1}^N a_kB_k$, (which belongs to $Z^{0,1}(\ol{D})$), such that
there exists a distribution $v$ on $D$ satisfying $\dbar v=B$. 
Set 
\[F=z_1v+\sum_{k=1}^N a_k u_k,\] 
then $F$ is a holomorphic function on $D$, so it should extend holomorphically to $\wt D$ but we have \[ F(0,z_2)=\sum_{k=1}^N a_k\frac{k!}{z_2^{k+1}},\] which is holomorphic and singular at $z_2=0$, which gives the contradiction since $0\in\wt D\setminus \overline D$.

\end{proof}
This allows us to prove the following analog for cohomologies with estimates of a result of Laufer (\cite{lauf}).
\begin{cor}\label{cor-pseudoconvex}
Let $D\Subset\cb^2$ be a bounded domain such that interior($\ol D$)$=D$. If either  $H^{0,1}_{L^2}(D)$ or $H^{0,1}_{W^1}(D)$ is finite dimensional, then $D$ is pseudoconvex.
\end{cor}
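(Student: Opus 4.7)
The plan is to deduce the corollary from Theorem~\ref{thm-fu} by showing that finite dimensionality of either $H^{0,1}_{L^2}(D)$ or $H^{0,1}_{W^1}(D)$ forces the distributional cohomology $\wt{H}^{0,1}(D)$ of \eqref{eq-htilde} to be finite dimensional, at which point Theorem~\ref{thm-fu} delivers the pseudoconvexity of $D$.

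First, since $D$ is bounded, every smooth $\dbar$-closed form $g \in Z^{0,1}(\overline{D})$ has coefficients in $L^2(D)$ and in $W^1(D)$, and $\dbar g = 0$ holds in the sense of distributions as well as classically. Inclusion followed by passage to the cohomology class therefore defines natural linear maps
\[
\iota_{L^2}: Z^{0,1}(\overline{D}) \longrightarrow H^{0,1}_{L^2}(D), \qquad \iota_{W^1}: Z^{0,1}(\overline{D}) \longrightarrow H^{0,1}_{W^1}(D).
\]

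Next, the key point is that each kernel is contained in $\wt{B}^{0,1}(D)$. If $g \in \ker \iota_{L^2}$, then there exists $u \in L^2(D)$ with $\dbar u = g$ in the sense of distributions, and since $L^2(D) \subset \mathcal{D}'(D)$ this immediately gives $g \in \wt{B}^{0,1}(D)$. The same argument handles $\iota_{W^1}$ via the chain of inclusions $W^1(D) \subset L^2(D) \subset \mathcal{D}'(D)$. Consequently $\wt{H}^{0,1}(D) = Z^{0,1}(\overline{D})/\wt{B}^{0,1}(D)$ is a further quotient of $Z^{0,1}(\overline{D})/\ker \iota_{*}$, which in turn injects into $H^{0,1}_{*}(D)$. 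Hence
\[
\dim \wt{H}^{0,1}(D) \leq \dim H^{0,1}_{*}(D),
\]
and finite dimensionality on the right forces finite dimensionality on the left. Applying Theorem~\ref{thm-fu} completes the proof.

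The argument is essentially a direct comparison of cohomology groups, so I do not anticipate any serious obstruction; the only substantive input is the (routine but essential) observation that $L^2$ and $W^1$ solutions of $\dbar u = g$ are a fortiori distributional solutions, which ensures that finiteness in the analytically enriched cohomologies propagates to the coarser cohomology $\wt{H}^{0,1}(D)$ governed by Theorem~\ref{thm-fu}.
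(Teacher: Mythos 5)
Your argument is correct and is essentially identical to the paper's proof: in both cases one observes that $Z^{0,1}(\overline{D})$ maps to $H^{0,1}_{L^2}(D)$ (resp. $H^{0,1}_{W^1}(D)$) with kernel $B^{0,1}_{L^2}(D)\cap Z^{0,1}(\overline{D})$ contained in $\wt{B}^{0,1}(D)$, so $\wt{H}^{0,1}(D)$ is a further quotient of a space that injects into the finite-dimensional cohomology, and Theorem~\ref{thm-fu} applies. No issues.
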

\begin{proof} Suppose that $H^{0,1}_{L^2}(D)=Z^{0,1}_{L^2}(D)/B^{0,1}_{L^2}(D)$ is finite dimensional. Then a fortiori, the space
$Z^{0,1}(\ol{D})/(B^{0,1}_{L^2}(D)\cap Z^{0,1}(\ol{D}))$ consisting of $L^2$ cohomology classes representable by forms smooth up to the boundary is also finite
dimensional.  But
\[  \frac{Z^{0,1}(\ol{D})}{B^{0,1}_{L^2}(D)\cap Z^{0,1}(\ol{D})}\supset\frac{Z^{0,1}(\ol{D})}{\wt{B}^{0,1}(D)}= \wt{H}^{0,1}(D), \]
since $B^{0,1}_{L^2}(D)\cap Z^{0,1}(\ol{D})\subset \wt{B}^{0,1}(D)$.  Therefore it follows that $\wt{H}^{0,1}(D)$ is finite dimensional, and we can apply Theorem~\ref{thm-fu} to conclude that $D$ is pseudoconvex. Exactly a similar proof
works in the case of the $W^1$-cohomology.
\end{proof}
We can now give a general characterization of domains in $\cx^2$ on which the $L^2$ $\dbar$-operator has closed range:
\begin{cor}\label{cor-2dim}
Let $V \Subset \wt\Omega$ be Lipschitz domains in  $\cx^2$,   and  suppose that $\cx^2\setminus \wt{\Omega}$ and 
$\Omega=\wt \Omega \setminus V$ are connected.  If  $H_{L^2}^{0,1}(\Omega)$ is Hausdorff then both $\wt{\Omega}$ and 
$V$ are pseudoconvex, and  the space $H^{0,1}_{L^2}(\Omega)$ is  infinite dimensional.
\end{cor}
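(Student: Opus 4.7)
The plan is a sequence of reductions using Theorem~\ref{thm-hole}, Corollary~\ref{cor-pseudoconvex}, and the Serre duality recalled in Section~\ref{sec-notation}. First, Theorem~\ref{thm-hole} (with $n=2$) turns the hypothesis that $H^{0,1}_{L^2}(\Omega)$ is Hausdorff into the two assertions $H^{0,1}_{W^1}(V)=0$ and $H^{0,1}_{L^2}(\wt\Omega)$ Hausdorff. Since $V$ is Lipschitz, $\mathrm{interior}(\overline V)=V$, and the vanishing in particular makes $H^{0,1}_{W^1}(V)$ finite dimensional, so Corollary~\ref{cor-pseudoconvex} immediately yields pseudoconvexity of $V$.

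The pseudoconvexity of $\wt\Omega$ is the main substance. The idea is to upgrade the Hausdorffness of $H^{0,1}_{L^2}(\wt\Omega)$ to outright vanishing, and then apply Corollary~\ref{cor-pseudoconvex} a second time. Via the Serre duality pairing, this reduces to showing $H^{2,1}_{c,L^2}(\wt\Omega)=0$. Given $\phi\in Z^{2,1}_{c,L^2}(\wt\Omega)$, I would extend it by zero to a compactly supported $\dbar$-closed $\wt\phi\in L^2_{2,1}(\cx^2)$ and apply the Bochner--Martinelli--Koppelman integral formula on $\cx^2$; because the degree $q=1$ is strictly less than $n=2$, the resulting primitive $\psi\in L^2_{2,0}(\cx^2)$ is holomorphic outside $\supp(\wt\phi)$ and decays at infinity, so it vanishes on the unbounded component of $\cx^2\setminus\supp(\wt\phi)$. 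The hypothesis that $\cx^2\setminus\wt\Omega$ is connected (and unbounded, since $\wt\Omega$ is bounded) places this set inside that unbounded component, giving $\supp\psi\Subset\wt\Omega$. Thus $\psi|_{\wt\Omega}\in A^{2,0}_{c,L^2}(\wt\Omega)$ with $\dbar_c\psi=\phi$, proving $H^{2,1}_{c,L^2}(\wt\Omega)=0$. Combined with Hausdorffness of $H^{0,1}_{L^2}(\wt\Omega)$ through the pairing, this forces $H^{0,1}_{L^2}(\wt\Omega)=0$, and Corollary~\ref{cor-pseudoconvex} gives pseudoconvexity of $\wt\Omega$.

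For the infinite dimensionality of $H^{0,1}_{L^2}(\Omega)$, I would use the Hartogs phenomenon and Corollary~\ref{cor-pseudoconvex} once more. Because $V$ is non-empty with $\overline V\subset\wt\Omega$ and $\Omega=\wt\Omega\setminus\overline V$ is connected, every holomorphic function on $\Omega$ extends across $\overline V$ to all of $\wt\Omega$, so the envelope of holomorphy of $\Omega$ strictly contains $\Omega$ and $\Omega$ is not pseudoconvex. As a bounded Lipschitz domain, $\Omega$ satisfies $\mathrm{interior}(\overline\Omega)=\Omega$, and the contrapositive of Corollary~\ref{cor-pseudoconvex} forces $H^{0,1}_{L^2}(\Omega)$ to be infinite dimensional.

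The main technical obstacle is the Bochner--Martinelli--Koppelman step inside Step~3: one must carefully check that the primitive of a merely $L^2$ compactly supported form in degree $(2,1)$ is itself in $L^2$ with compact support contained in $\wt\Omega$, and belongs to the domain of the minimal realization $\dbar_c$ on $\wt\Omega$, rather than just to $L^2_{2,0}(\wt\Omega)$.
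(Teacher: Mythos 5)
Your treatment of $V$ and of the infinite-dimensionality of $H^{0,1}_{L^2}(\Omega)$ coincides with the paper's: Theorem~\ref{thm-hole} reduces the hypothesis to $H^{0,1}_{W^1}(V)=0$ plus Hausdorffness of $H^{0,1}_{L^2}(\wt{\Omega})$, Corollary~\ref{cor-pseudoconvex} then gives pseudoconvexity of $V$, and the same corollary applied to the annulus $\Omega$ (not pseudoconvex by Hartogs, and satisfying ${\rm interior}(\ol{\Omega})=\Omega$) gives the infinite-dimensionality. The one point where you genuinely diverge is the pseudoconvexity of $\wt{\Omega}$: the paper simply cites \cite[Theorem~3.4]{LS}, which says that for a Lipschitz domain with connected complement $H^{0,1}_{L^2}(\wt{\Omega})$ is Hausdorff if and only if $\wt{\Omega}$ is pseudoconvex, whereas you reprove the needed direction by showing $H^{2,1}_{c,L^2}(\wt{\Omega})=0$ via the Bochner--Martinelli--Koppelman formula and then combining Serre duality with Corollary~\ref{cor-pseudoconvex}. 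This route is workable and self-contained, but as written it has a gap in the support argument: an element of $Z^{2,1}_{c,L^2}(\wt{\Omega})$ lies in the domain of the minimal realization $\dbar_c$ but need not be compactly supported \emph{in} $\wt{\Omega}$; its extension by zero is supported only in $\ol{\wt{\Omega}}$, so the primitive $\psi$ can be shown to vanish only on the unbounded component of $\cx^2\setminus \ol{\wt{\Omega}}$, giving $\supp\psi\subset\ol{\wt{\Omega}}$ rather than $\supp\psi\Subset\wt{\Omega}$. To close this you need two additional observations: first, that $\cx^2\setminus\ol{\wt{\Omega}}$ has no bounded components (your hypothesis concerns the closed complement $\cx^2\setminus\wt{\Omega}$; for a Lipschitz boundary the two statements are equivalent, but this must be said); second, that a form on $\wt{\Omega}$ whose extension by zero across the Lipschitz boundary lies in $\Dom(\dbar)$ on $\cx^2$ actually lies in $\Dom(\dbar_c)$ on $\wt{\Omega}$ --- this is exactly \cite[Proposition~2]{ChaSh}, used the same way in the proof of Proposition~\ref{prop-hole3}. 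With those two repairs your argument is correct; the citation the paper uses packages precisely this reasoning.
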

\begin{proof} By Theorem~1, the fact that $H^{0,1}_{L^2}(\Omega)$ is Hausdorff is equivalent to 
$H^{0,1}_{W^1}(V)=0$ and $H^{0,1}_{L^2}(\wt{\Omega})$ being Hausdorff. By Corollary~\ref{cor-pseudoconvex} above,  $H^{0,1}_{W^1}(V)=0$ implies that $V$ is pseudoconvex. 
 It was shown in \cite[Theorem~3.4]{LS} that for a Lipschitz ``hole-less'' domain such as $\wt{\Omega}$,  the space $H^{0,1}_{L^2}(\wt{\Omega})$ is Hausdorff  if and only if $\wt{\Omega}$ is pseudoconvex.

The infinite-dimensionality of $H^{0,1}_{L^2}(\Omega)$ also follows from Corollary~\ref{cor-pseudoconvex}, since we have ${\rm interior}(\ol{\Omega})= \Omega$, and  if $H^{0,1}_{L^2}(\Omega)$ were finite dimensional, $\Omega=\wt{\Omega}\setminus \ol{V}$ would be pseudoconvex.
\end{proof}

\section{$L^2$-Dolbeault and $L^2$-\v{C}ech cohomologies}
For an open $U\subset\cx^n$, let $A^{p,q}_{L^2}(U), B^{p,q}_{L^2}(U)$ and $Z^{p,q}_{L^2}(U)$ be as in section~\ref{sec-notation} above.
Then $A^{p,q}_{L^2}$ defines a presheaf of pre-Hilbert spaces on $\Omega$, which  is not a sheaf.  $ B^{p,q}_{L^2}$ and $Z^{p,q}_{L^2}$  are sub-presheaves of $A^{p,q}_{L^2}$, and    $Z^{p,q}_{L^2}$ is a presheaf of Hilbert spaces.  Note that $Z^{0,0}_{L^2}(U)$ is just the
Bergman space on $U$, which will also be denoted  by $\mathcal{O}_{L^2}(U)$. Recall also that the $L^2$-Dolbeault cohomology of $U$
is defined to be the quotient topological vector space $H^{p,q}_{L^2}(U)= Z^{p,q}_{L^2}(U)/B^{p,q}_{L^2}(U)$.
Given open sets $V\subset U\subset \Omega$, and a class $\gamma\in H^{p,q}_{L^2}(U)$, we denote by $\gamma|_{V}$ the class in $H^{p,q}_{L^2}(V)$ obtained by restricting $\gamma$ to $V$. More precisely, if $g\in Z^{p,q}_{L^2}(U)$ is such that $\gamma={\rm class}(g)$ in $H^{p,q}_{L^2}(U)$, then
$\gamma|_V= {\rm class}(g|_V)$ in  $H^{0,1}_{L^2}(V)$.

Now suppose that we are given a finite collection  $\Uu=\{\Omega_j\}_{j=1}^N$ of open sets
covering an open set  $\Omega\subset \cx^n$ (i.e., $\bigcup_{j=1}^N \Omega_j=\Omega$).    Given a presheaf $\mathcal{B}$ of normed linear spaces on $\Omega$ (e.g. the presheaves $A^{p,q}_{L^2}, Z^{p,q}_{L^2}$ and $B^{p,q}_{L^2}$ of the previous paragraph), we can define for each $k$, the space  $\check{C}^k(\Uu,\mathcal{B})$ of \v{C}ech $k$-cochains, and the corresponding coboundary map $\delta_k:\check{C}^k(\mathfrak{U}, \mathcal{B})\to \check{C}^{k+1}(\mathfrak{U}, \mathcal{B})$  (cf. \cite[p. 187]{GuRo}). As usual, we let $\check{Z}^k(\Uu,\mathcal{B})$ and $\check{B}^k(\Uu,\mathcal{B})$ denote the spaces of \v{C}ech cocycles and coboundaries respectively, and then the
\v{C}ech cohomology of $\mathcal{B}$ with respect to the cover $\Uu$ is given by $\check{H}^q(\Uu, \mathcal{B})=\check{Z}^k(\Uu,\mathcal{B})/\check{B}^k(\Uu,\mathcal{B})$.

  $\check{C}^k(\mathfrak{U}, \mathcal{B})$ is a topological vector space as the direct sum of the $\mathcal{B}(\Omega_{i_0\dots i_k})$ as
$1\leq i_0,\dots,i_k\leq N$,  endowed with the direct sum topology, where $\Omega_{i_0\dots i_k} =\bigcap_{\ell=0}^k \Omega_{i_\ell}$.  The topological vector space  $\check{C}^k(\mathfrak{U}, \mathcal{B})$ is a normed linear space in a natural way:  for $k\geq 0$, a norm on
$\check{C}^k(\mathfrak{U}, \mathcal{B})$ is given by
\[ \norm{g}_{\check{C}^k(\mathfrak{U}, \mathcal{B})}^2 = \sum_{1\leq i_0,\dots,i_k\leq N}\norm{g_{i_0\dots i_k}}_{\mathcal{B}(\Omega_{i_0\dots i_k})}^2.\]
  Of course  there are many other choices of equivalent norms, but the above choice is appropriate when $\mathcal{B}$ is a sheaf of pre-Hilbert spaces, which is the only case we consider.  Then $\check{C}^k(\mathfrak{U}, \mathcal{B})$ is again a pre-Hilbert space, and a Hilbert space if $\mathcal{B}$ happens to be a sheaf of Hilbert spaces.
With this topology, the coboundary map
$\delta$ is continuous, the cocycle space $\check{Z}^k(\mathfrak{U}, \mathcal{B})= \ker \delta_k\cap \check{C}^k(\mathfrak{U}, \mathcal{B})$ is  a normed linear space, and the
 coboundary space $ \check{B}^k(\mathfrak{U}, \mathcal{B})= \img \delta_{k-1}\cap \check{C}^k(\mathfrak{U}, \mathcal{B})$ is a subspace of
 $\check{Z}^k(\mathfrak{U}, \mathcal{B})$.  Then the $k$-th cohomology group of this complex $\check{H}^k(\Uu,\mathcal{B})$ is a topological
 vector space with the quotient topology, and this topology is Hausdorff if and only if $\check{B}^k(\mathfrak{U}, \mathcal{B})$ is
 a closed subspace of  $\check{Z}^k(\mathfrak{U}, \mathcal{B})$.

 A relation between these two types of cohomology is given by the following result, whose proof is inspired by that of a well-known classical result of Leray (cf. \cite[page 189]{GuRo}). Related results were obtained for
  the Fréchet topology in \cite{laufer}.

\begin{thm}\label{thm-leray} Let $\Omega$ be a bounded domain in $\cx^n$.  Let $\Uu=\{\Omega_1,\dots,\Omega_N\}$ be a finite open cover of $\Omega$  such that, for all $j=1,\dots,N$, the cohomology group $H^{0,1}_{L^2}(\Omega_j)$ is Hausdorff. Then,

 \begin{enumerate}
 \item
   if  the \v{C}ech group $\check{H}^1(\Uu,Z^{0,0}_{L^2})$ is Hausdorff, then the
$L^2$-Dolbeault cohomology $H^{0,1}_{L^2}(\Omega)$ is also Hausdorff.
\item  if $\check{H}^1(\Uu,Z^{0,0}_{L^2})=0$,  the  map  $H^{0,1}_{L^2}(\Omega)\to \bigoplus_{k=1}^N H^{0,1}_{L^2}(\Omega_k) $, induced by restriction maps from $\Omega$ to $\Omega_k$, maps $H^{0,1}_{L^2}(\Omega)$ isomorphically onto a subspace of
\begin{equation}\label{eq-image2}
\left\{ \left(\gamma_k\right)_{k=1}^N, \gamma_k \in H^{0,1}_{L^2}(\Omega_k) \left| \text{  for $i\not=j$}, \gamma_i|_{\Omega_{ij}}= \gamma_j|_{\Omega_{ij}}\right.\right\}.
\end{equation}
\item If  $\check{H}^1(\Uu,Z^{0,0}_{L^2})=0$ and for each $j$,  $H^{0,1}_{L^2}(\Omega_j)=0$, then $H^{0,1}_{L^2}(\Omega)=0$.
\end{enumerate}\end{thm}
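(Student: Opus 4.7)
The plan is to prove the three parts in order of increasing difficulty, using a smooth partition of unity $\{\chi_j\}$ subordinate to the cover $\Uu$ to pass from local to global data. For (3), given $f \in Z^{0,1}_{L^2}(\Omega)$, the hypothesis $H^{0,1}_{L^2}(\Omega_j) = 0$ produces local potentials $u_j \in A^{0,0}_{L^2}(\Omega_j)$ with $\dbar u_j = f|_{\Omega_j}$. The differences $u_i - u_j$ lie in $Z^{0,0}_{L^2}(\Omega_{ij}) = \mathcal{O}_{L^2}(\Omega_{ij})$ and form a \v{C}ech $1$-cocycle; by $\check{H}^1(\Uu, Z^{0,0}_{L^2}) = 0$ this equals a coboundary $h_i - h_j$ with $h_k \in \mathcal{O}_{L^2}(\Omega_k)$. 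Then $u_j - h_j$ agree on overlaps and glue to a function $u$ on $\Omega$; the bound $\|u\|_{L^2(\Omega)}^2 \leq \sum_j \|u_j - h_j\|_{L^2(\Omega_j)}^2$ places $u$ in $A^{0,0}_{L^2}(\Omega)$ with $\dbar u = f$. For (2), the restriction map automatically lands in the coherent subspace (both restrictions to $\Omega_{ij}$ agree with $[f|_{\Omega_{ij}}]$), and injectivity is precisely the argument of (3) applied to an $f$ whose restrictions to each $\Omega_j$ are $\dbar$-exact.

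For (1), the technical heart, I introduce the subspace
\[ \wt{Z} = \{ f \in Z^{0,1}_{L^2}(\Omega) : f|_{\Omega_j} \in B^{0,1}_{L^2}(\Omega_j) \text{ for every } j \}. \]
The assumption that each $H^{0,1}_{L^2}(\Omega_j)$ is Hausdorff means $B^{0,1}_{L^2}(\Omega_j)$ is closed in $L^2_{0,1}(\Omega_j)$; together with continuity of restriction this makes $\wt{Z}$ closed in $Z^{0,1}_{L^2}(\Omega)$, so since $B^{0,1}_{L^2}(\Omega) \subset \wt{Z}$ it suffices to show $B^{0,1}_{L^2}(\Omega)$ is closed in $\wt{Z}$. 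I will construct a bounded linear map $\Psi : \wt{Z} \to \check{H}^1(\Uu, Z^{0,0}_{L^2})$ whose kernel is exactly $B^{0,1}_{L^2}(\Omega)$: for $f \in \wt{Z}$, choose any local potentials $v_j \in A^{0,0}_{L^2}(\Omega_j)$ for $f|_{\Omega_j}$ and set $\Psi(f) = [(v_i - v_j)_{ij}]$. Two such choices differ by holomorphic functions and so change the cocycle by a coboundary, giving $\Psi(f)$ well-defined in $\check{H}^1$; it is linear because sums of potentials are potentials of sums. The kernel is as claimed: $\Psi(f) = 0$ yields $v_i - v_j = h_i - h_j$ so that $v_j - h_j$ glue to a global solution, while $f = \dbar u$ allows $v_j = u|_{\Omega_j}$, giving cocycle zero.

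For continuity of $\Psi$, apply the open mapping theorem to the continuous bijection $\dbar : A^{0,0}_{L^2}(\Omega_j)/\mathcal{O}_{L^2}(\Omega_j) \to B^{0,1}_{L^2}(\Omega_j)$ (between Banach spaces, thanks to closedness of $B^{0,1}_{L^2}(\Omega_j)$) to obtain, for each $f \in \wt{Z}$, potentials $v_j$ satisfying $\|v_j\|_{L^2(\Omega_j)} \leq C \|f\|_{L^2(\Omega)}$. This yields $\|(v_i - v_j)_{ij}\|_{\check{C}^1} \leq C' \|f\|$, and since $\Psi(f)$ is independent of the choice of $v_j$, we obtain $\|\Psi(f)\|_{\check{H}^1} \leq C' \|f\|$. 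The Hausdorffness of $\check{H}^1(\Uu, Z^{0,0}_{L^2})$ then makes $\{0\}$ closed, hence $\ker \Psi = B^{0,1}_{L^2}(\Omega)$ is closed in $\wt{Z}$, and therefore in $Z^{0,1}_{L^2}(\Omega)$. The main obstacle is in (1): local potentials are only defined modulo $\mathcal{O}_{L^2}(\Omega_j)$, so the assignment $f \mapsto \Psi(f)$ is built non-linearly at the level of cochains and becomes linear only after passage to cohomology classes, and it is precisely the closed range of $\dbar$ on each $\Omega_j$ that, via open mapping, furnishes the uniform control needed to translate Hausdorffness of $\check{H}^1$ into Hausdorffness of $H^{0,1}_{L^2}(\Omega)$.
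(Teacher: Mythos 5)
Your proof is correct and follows essentially the same route as the paper: the paper's map $\ell=\delta\circ K\circ\varepsilon$ (built from fixed bounded local solution operators $K_j$, whose existence is exactly your open-mapping-theorem step) descends to precisely your map $\Psi$ on $\wt{Z}=\varepsilon^{-1}\bigl(\check{C}^0(\Uu,B^{0,1}_{L^2})\bigr)$, with the identity $\ker\Psi=\ell^{-1}\bigl(\check{B}^1(\Uu,Z^{0,0}_{L^2})\bigr)=B^{0,1}_{L^2}(\Omega)$ being the paper's key equation. The only differences are organizational (you prove (3) directly and deduce (2), the paper does the reverse; the unused partition of unity in your first sentence is a harmless red herring).
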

This result may be interpreted as saying that given an $L^2$-estimate for the $\dbar$-problem in each open set of the cover $\Uu$, the obstruction to obtaining a global $L^2$-estimate on $\Omega$ resides in the $L^2$-\v{C}ech group $\check{H}^1(\Uu,Z^{0,0}_{L^2})$.  When this \v{C}ech group is Hausdorff, then the $L^2$-Dolbeault group is Hausdorff. In particular, when the \v{C}ech group vanishes, the  cohomology classes in $H^{0,1}_{L^2}(\Omega)$ are obtained by ``gluing together" the cohomologies in each set in the cover as in \eqref{eq-image2}.

We also remark that use of the $L^2$-topology in Theorem~\ref{thm-leray} is not important, and similar gluing 
techniques work for estimates in any norm, e.g., $L^p$ estimates or Hölder estimates. 

\begin{proof}[Proof of Theorem~\ref{thm-leray}] For $k,q\geq 0$, define an operator \[\dbar=\dbar_q: \check{C}^k(\Uu, A^{p,q}_{L^2})\to \check{C}^{k}(\Uu, A^{p,q+1}_{L^2})\] sectionwise, i.e., for $g\in \check{C}^k(\Uu, A^{p,q}_{L^2})$, set
$ (\dbar g)_{i_0\dots i_k} = \dbar (g_{i_0\dots i_k}),$
and note that for a given $k$, $\ker \dbar_q=\check{C}^k(\Uu,Z^{p,q}_{L^2})$ and $\img \dbar_q =\check{C}^k(\Uu, B^{p,q+1}_{L^2})$.
Then for each fixed $k,p\geq 0$  we have a complex $\left(\check{C}^k(\Uu,A^{p,q}_{L^2}),\dbar_q\right)$. In fact, we have,
for each fixed $p$ a double complex of commuting differentials $\left(\check{C}^k(\Uu,A^{p,q}_{L^2}),\dbar_q, \delta_k\right)$: i.e. we have  $\dbar{} \delta = \delta {} \dbar$. This  follows since for $g\in \check{C}^k(\Uu, A_{p,q})$,
\[ \dbar \delta g = \delta \dbar g = \sum_{j=0}^k (-1)^j\dbar g_{i_0\dots \widehat{i_j} \dots i_k}\]
where the hat denotes omission. We represent the relevant part of the double complex for $p=0$ in the following diagram:
\[\begin{diagram}
0&\rTo &Z^{0,0}_{L^2}(\Omega) &\rTo^\varepsilon& \check{C}^0(\Uu, Z^{0,0}_{L^2})&\rTo^\delta &\check{C}^1(\Uu,Z^{0,0}_{L^2})&\rTo^{\delta}&\dots\\
&&\dTo^ i&&\dTo^i&&\dTo^i\\
0&\rTo &A^{0,0}_{L^2}(\Omega) &\rTo^\varepsilon& \check{C}^0(\Uu, A^{0,0}_{L^2})&\rTo^\delta &\check{C}^1(\Uu,A^{0,0}_{L^2})&\rTo^{\delta}&\dots\\
&&\dTo^\dbar &&\dTo^\dbar&&\dTo^\dbar\\
0&\rTo &A^{0,1}_{L^2}(\Omega) &\rTo^\varepsilon& \check{C}^0(\Uu, A^{0,1}_{L^2})&\rTo^\delta &\check{C}^1(\Uu,A^{0,1}_{L^2})&\rTo^{\delta}&\dots\\
&&\dTo^\dbar &&\dTo^\dbar&&\dTo^\dbar\\
\end{diagram}\]
Here, for a presheaf $\mathcal{B}$, the map    $\varepsilon: \mathcal{B}(\Omega)\to \check{C}^0(\mathfrak{U},\mathcal{B})$ is given  by  $ (\varepsilon f)_i = f|_{\Omega_i}$
for each $\Omega_i \in \mathfrak{U}$. Note that then the sequence $\mathcal{B}(\Omega)\xrightarrow{\varepsilon}\check{C}^0(\mathfrak{U},\mathcal{B})\xrightarrow{\delta}\check{C}^1(\mathfrak{U},\mathcal{B})$ is exact at $\check{C}^0(\mathfrak{U},\mathcal{B})$, i.e.,  $\img \varepsilon = \ker \delta$. The map $i$ is the inclusion map,
so that we have that each vertical column is exact along the second row, i.e., $\img i = \ker \dbar$. Our proof will be a ``Topological Diagram Chase'' with this diagram,
where we will need to keep track of the continuity of the maps.

By hypothesis  $H^{0,1}_{L^2}(\Omega_j)$ is Hausdorff for each $j$,  so  there is a continuous solution operator $K_j:B^{0,1}_{L^2}(\Omega_j)\to A^{0,0}_{L^2}( \Omega_j)$. For example, we can take $K_j$ to be the canonical solution operator $\dbar^* N_{0,1}$, where $\dbar^*$ is the adjoint of $\dbar$, and $N_{0,1}$ is the $\dbar$-Neumann operator (see \cite{fk}).
We define a map $K=K:\check{C}^0(\mathfrak{U},Z^{0,1}_{L^2})\to \check{C}^0(\mathfrak{U}, A^{0,0}_{L^2})$, by setting $(Kg)_j=K_j(g_j)$.
Then $K$ is continuous, and we have $\dbar K g =g$.

Consider the map $\varepsilon: A^{0,1}_{L^2}(\Omega)\to \check{C}^0(\Uu, A^{0,1}_{L^2})$. Since by hypothesis, $B^{0,1}_{L^2}(\Omega_j)$ is a closed subspace of $A^{0,1}_{L^2}(\Omega_j)$ for each $j$, it follows that
$\varepsilon^{-1}\left(\check{C}^0(\Uu,B^{0,1}_{L^2})\right)$ is a closed subspace of $ A^{0,1}_{L^2}(\Omega)$.  Define a continuous linear map
\[ \ell: \varepsilon^{-1}\left(\check{C}^0(\Uu,B^{0,1}_{L^2})\right)\to \check{Z}^1(\Uu,Z^{0,0}_{L^2})\] by setting
\begin{equation}\label{eq-ell}
\ell= \delta{} K {} \varepsilon.
\end{equation}
From the definitions of $\varepsilon, K$ and $\delta$,  this is a continuous linear map from  $\varepsilon^{-1}\left(\check{C}^0(\Uu,B^{0,1}_{L^2})\right)$ to $\check{C}^1(\Uu,A^{0,0}_{L^2})$. But for $g\in \varepsilon^{-1}\left(\check{C}^0(\Uu,B^{0,1}_{L^2})\right)$, we have
\[
\dbar\ell g= \dbar {}\delta {} K {} \varepsilon g
= \delta {} \dbar {} K {} \varepsilon g
= \delta {} \varepsilon g
=0,\]
which shows that $\ell(g)\in \check{C}^1(\Uu,Z^{0,0}_{L^2})$.  However, since $\delta{}\delta=0$, it follows that
$\ell(g)\in \check{Z}^1(\Uu,Z^{0,0}_{L^2})$.

The basic property of $\ell$ that makes it useful is that
\begin{equation}\label{eq-ellproperty}
\ell^{-1}\left(\check{B}^1\left(\Uu,Z^{0,0}_{L^2}\right)\right) = B^{0,1}_{L^2}(\Omega).
\end{equation}

To see \eqref{eq-ellproperty},  first, let $g\in B^{0,1}_{L^2}(\Omega)$. Then there is a $u\in A^{0,0}_{L^2}(\Omega)$ such that $\dbar u =g$. Consider
\[ h=K{} \varepsilon g - \varepsilon u.\]
By construction, $h\in \check{C}^0(\Uu, A^{0,0}_{L^2})$.  Note  that
\[ \dbar h =  \dbar {} K {} \varepsilon g - \dbar {}\varepsilon u= \varepsilon g - \dbar \varepsilon u =0,\]
since $\dbar u=g$. Therefore, in fact, $h\in \check{C}^0(\Uu, Z^{0,0}_{L^2})$. On the other hand,
\[ \delta h = \delta {} K {} \varepsilon g - \delta {}\varepsilon u= \ell (g) -0=\ell(g),\]
which shows that $\ell (g)=\delta(h) \in\check{B}^1\left(\Uu, Z^{0,0}_{L^2}\right)$. It follows that   \[ B^{0,1}(\Omega)
\subset \ell^{-1}\left(\check{B}^1\left(\Uu,Z^{0,0}_{L^2}\right)\right).\]

For the opposite inclusion, let $g \in  \ell^{-1}\left(\check{B}^1\left(\Uu,Z^{0,0}_{L^2}\right)\right)$, so that
$\ell(g) \in \check{B}^1\left(\Uu,Z^{0,0}_{L^2}\right)$. Then there exists  $h\in \check{C}^0(\Uu, Z^{0,0}_{L^2})$ such that $\ell(g)=\delta h$. We define $u_0\in\check{C}^0(\Uu,A^{0,0}_{L^2}) $ by
\[ u_0= K{} \varepsilon g -h.\]
In fact, $\dbar u_0 = \dbar {} K {} \varepsilon g -\dbar h= \varepsilon g $.
Also,
\[ \delta u_0 =  \delta{} K {}\varepsilon g -\delta h =\ell(g)-\delta h=0.\]
It follows that there is a $u\in  A^{0,0}(\Omega)$ such that $u_0= \varepsilon u$.  Since $\dbar u_0= \varepsilon (g) $, it follows
that $g=\dbar u$, so that $g\in B^{0,1}(\Omega)$. Equation \eqref{eq-ellproperty} is thus established. It follows from   \eqref{eq-ellproperty} that  there is a continuous linear injective map
\begin{equation}\label{eq-ellbar2}
  \ol{\ell}:\frac{\varepsilon^{-1}\left(\check{C}^0(\Uu,B^{0,1}_{L^2})\right)}{ B^{0,1}_{L^2}(\Omega)}\to \check{H}^1(\Uu, Z^{0,0}_{L^2}),
 \end{equation}
 induced by the map \eqref{eq-ell}.

To prove part (1) of the proposition, suppose that $\check{H}^1(\Uu,Z^{0,0}_{L^2})$ is Hausdorff, i.e.,
the subspace $\check{B}^1\left(\Uu, Z^{0,0}_{L^2}\right)$ is closed in $\check{Z}^1\left(\Uu, Z^{0,0}_{L^2}\right)$.
Then by  \eqref{eq-ellproperty}, since $\ell$ is continuous, $B^{0,1}(\Omega)$ is a closed subspace of $Z^{0,1}(\Omega)$, which means that
$H^{0,1}_{L^2}(\Omega)$ is Hausdorff. This completes the proof of part (1).

 To prove part (2), from the hypothesis that $\check{H}^1(\Uu,Z^{0,0}_{L^2})=0$ and the injectivity of the map
 \eqref{eq-ellbar2}, we see that
 \begin{equation}\label{eq-epsiloninverse}
 B^{0,1}_{L^2}(\Omega)= \varepsilon^{-1}\left(\check{C}^0(\Uu,B^{0,1}_{L^2})\right).
 \end{equation}
Consider now the sequence of Hilbert spaces and continuous linear maps:
\begin{equation}\label{eq-exact}
  Z^{0,1}_{L^2}(\Omega)\xrightarrow{\varepsilon} \check{C}^{0}(\Uu, Z^{0,1}_{L^2})\xrightarrow{\delta} \check{C}^1(\Uu, Z^{0,1}_{L^2}),
\end{equation}
which is clearly exact. Therefore, going modulo $ B^{0,1}_{L^2}(\Omega)$, and using \eqref{eq-epsiloninverse}, we obtain an injective continuous map
\[ \ol{\varepsilon}: H^{0,1}_{L^2}(\Omega) \to \frac{\check{C}^{0}(\Uu, Z^{0,1}_{L^2})}{\check{C}^{0}(\Uu, B^{0,1}_{L^2})} \cong \bigoplus_{j=1}^N H^{0,1}_{L^2}(\Omega_j),\]
whose image (by the exactness of  \eqref{eq-exact})  is the subspace of ${\check{C}^{0}(\Uu, Z^{0,1}_{L^2})}/{\check{C}^{0}(\Uu, B^{0,1}_{L^2})} $ given by
\begin{align*}
\img \ol{\varepsilon}&=\frac{\ker\left(\delta:\check{C}^{0}(\Uu, Z^{0,1}_{L^2})\to \check{C}^1(\Uu, Z^{0,1}_{L^2})\right)}{\check{C}^{0}(\Uu, B^{0,1}_{L^2})}\\
&= \left\{(g_k)_{k=1}^N \in \bigoplus_{k=1}^N Z^{0,1}_{L^2}(\Omega_k)\left|\text{for $i\not=j$, } g_j|_{\Omega_{ij}}=g_i|_{\Omega_{ij}}\right.\right\}\bigg/\left(\bigoplus_{k=1}^N B^{0,1}_{L^2}(\Omega_k)\right)\\
&\subset \left\{ \left(\gamma_k\right)_{k=1}^N, \gamma_k \in H^{0,1}_{L^2}(\Omega_k) \left| \text{  for $i\not=j$}, \gamma_i|_{\Omega_{ij}}= \gamma_j|_{\Omega_{ij}}\right.\right\},
\end{align*}
which completes the proof of part (2) of the proposition.

For part (3), note that under the hypothesis $H^{0,1}_{L^2}(\Omega_j)=0$ for each $j$, the space $\bigoplus_{k=1}^N H^{0,1}_{L^2}(\Omega_k)$ vanishes. But by part (2), there is an injective mapping of $H^{0,1}_{L^2}(\Omega)$ into this space, so the conclusion follows.\end{proof}
\section{Case of a hole which is an intersection}
 For an open set $D$ in $\cx^n$, we from now on for convenience of notation denote the space $Z^{0,0}_{L^2}(D)$ by $\mathcal{O}_{L^2}(D)$. Then
$\mathcal{O}_{L^2}(D)= \mathcal{O}(D)\cap L^2(D)$ is  the {\em Bergman space} of $D$,
the space of all holomorphic functions on $D$ which are square integrable with respect to  Lebesgue measure. 
\subsection{Proof of Theorem~\ref{thm-new}}Set $\Omega_j =\wt{\Omega}\setminus K_j$. Then $\Uu=\{\Omega_j, 1\leq j \leq N\}$ is an open cover 
of $\Omega$, and each $\Omega_j$ is connected, and  $H^{0,1}_{L^2}(\Omega_j)$ is Hausdorff for each $j$ by hypothesis. 

We claim that $\check{H}^1(\Uu,\mathcal{O}_{L^2})=0$.  Let $f \in \check{Z}^1(\Uu,\mathcal{O}_{L^2})$, so that $f=(f_{ij})$, where 
$f_{ij}\in \mathcal{O}_{L^2}(\Omega_{ij})$, $1\leq i,j \leq N$.  By hypothesis (3), the open set $\Omega_{ij}=\wt{\Omega}\setminus (K_i\cup K_j)$ is connected. By Hartogs'  phenomenon, each $f_{ij}$ extends to a $\wt{f}_{ij}\in \mathcal{O}_{L^2}(\wt{\Omega})$.  Now since  $\delta f=0$, we have for $1 \leq i,j,k \leq N$ the following equation on $\Omega_{ijk}$:
\[ f_{ij}- f_{jk}+f_{ki}=0.\]
By analytic continuation we have on the whole of $\wt{\Omega}$:
\begin{equation} \label{eq-ancont} \wt{f}_{ij}- \wt{f}_{jk}+\wt{f}_{ki}=0.
\end{equation}
Define a $u\in \check{C}^0(\Uu, \mathcal{O}_{L^2})$ by setting
$ u_1=0 $ on $\Omega_1$, and for $j\geq 2$, $u_j= \wt{f}_{1j}|_{\Omega_j}$. Then on $\Omega_{ij}$ we have
\[ (\delta u)_{ij} =u_j-u_i = \wt{f}_{1j}- \wt{f}_{1i}= \wt{f}_{ij}|_{\Omega_{ij}}= f_{ij}.\]
Therefore $\delta u =f$, i.e., $\check{H}^1(\Uu, \mathcal{O}_{L^2})=0$. It now follows  from Theorem~\ref{thm-leray} that $H^{0,1}_{L^2}(\Omega)$ is Hausdorff.
This completes the proof of Theorem~\ref{thm-new}.

\subsection{Proof of Corollary~\ref{cor-new}} We note that the following stronger form of Corollary~\ref{cor-new} holds, where the Lipschitz domain $V$ can be replaced by a compact set with minimal boundary regularity hypothesis:
\begin{cor}\label{cor-newbis} Let $K\Subset \cx^n$ be a regular compact set  such that $\cx^n\setminus K$ is connected. Suppose that  $K=\bigcap_{j=1}^N \ol{V_j}$, where for $1\leq j \leq N$, $V_j\Subset \cx^n$ is a Lipschitz domain such that $H^{0,n-1}_{W^1}(V_j)=0$. If $\cx^n\setminus V_j$ is connected for each $j$, and $\cx^n\setminus (V_i\cup V_j)$ is connected for each $1\leq i,j \leq N$,  then
$H^{0,n-1}_{W^1}(K)=0$.
\end{cor}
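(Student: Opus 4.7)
The idea is to reduce the statement to an application of Proposition~\ref{prop-hole1} by enclosing $K$ in a large open ball and invoking Theorem~\ref{thm-new} to show that the corresponding annulus has Hausdorff $L^2$-Dolbeault cohomology in degree $(0,1)$. So I fix an open ball $\wt{\Omega}\subset\cx^n$ chosen large enough that $\bigcup_{j=1}^N \ol{V_j}\Subset \wt{\Omega}$, and I set $\Omega=\wt{\Omega}\setminus K$ and $K_j=\ol{V_j}$. Since $\wt{\Omega}$ is pseudoconvex, $H^{0,1}_{L^2}(\wt{\Omega})=0$ by Hörmander's theorem, so in particular this group is Hausdorff, verifying hypothesis (1) of Theorem~\ref{thm-new}.

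Next I verify the connectedness statements. Because $\wt{\Omega}$ is a large ball containing all $\ol{V_j}$ in its interior, any two points of $\wt{\Omega}\setminus A$ with $A\subset \bigcup_j \ol{V_j}$ can be joined by a path in $\cx^n\setminus A$ and then pushed back into $\wt{\Omega}$ by radial retraction to a sphere enclosing $\bigcup_j \ol{V_j}$; thus connectedness of $\cx^n\setminus A$ implies connectedness of $\wt{\Omega}\setminus A$. Applied successively to $A=K$, $A=V_j$ (and hence $A=\ol{V_j}$, which has the same number of components in the exterior because $V_j$ is Lipschitz) and $A=V_i\cup V_j$, this shows that $\Omega$, $\wt{\Omega}\setminus K_j$ and $\wt{\Omega}\setminus(K_i\cup K_j)$ are all connected. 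Finally, by Proposition~\ref{prop-hole3}, the combination ``$V_j$ Lipschitz, $H^{0,n-1}_{W^1}(V_j)=0$, $H^{0,1}_{L^2}(\wt{\Omega})$ Hausdorff, and $\wt{\Omega}\setminus K_j$ connected'' gives that $H^{0,1}_{L^2}(\wt{\Omega}\setminus K_j)$ is Hausdorff, so hypothesis (2) of Theorem~\ref{thm-new} is satisfied; hypothesis (3) was just checked.

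Theorem~\ref{thm-new} therefore applies and yields that $H^{0,1}_{L^2}(\Omega)$ is Hausdorff. Since $K$ is by assumption a regular compact subset of $\wt{\Omega}$ and $\Omega=\wt{\Omega}\setminus K$ is connected, Proposition~\ref{prop-hole1} now delivers the conclusion $H^{0,n-1}_{W^1}(K)=0$. The substantive content is supplied entirely by Theorems~\ref{thm-hole} and~\ref{thm-new}; I expect the only friction to be the elementary topological bookkeeping in the connectedness verifications, in particular making sure that replacing the Lipschitz domain $V$ of the original Corollary~\ref{cor-new} by the regular compact intersection $K=\bigcap_j \ol{V_j}$ does not disturb the hypotheses of Proposition~\ref{prop-hole1}, which requires only the regularity of $K$ that is assumed from the start.
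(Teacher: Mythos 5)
Your proposal is correct and follows exactly the paper's route: enclose $K$ in a large ball $\wt{\Omega}$, set $K_j=\ol{V_j}$, verify the hypotheses of Theorem~\ref{thm-new} (using Proposition~\ref{prop-hole3} for hypothesis (2)) to get that $H^{0,1}_{L^2}(\wt{\Omega}\setminus K)$ is Hausdorff, and then conclude via Proposition~\ref{prop-hole1}. The only difference is that you spell out the connectedness bookkeeping that the paper dismisses as ``easy to verify.''
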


\begin{proof} Recall that here regularity of the compact $K$ is in the sense of \eqref{eq-regularity}, and $H^{0,n-1}_{W^1}(K)=0$ is in the sense explained after \eqref{eq-hw1} above.
   Apply Theorem~\ref{thm-new}, taking $\wt{\Omega}$ to be a ball 
of sufficiently large radius to contain the closure of all the $V_j$'s.  If we set $K_j=\ol{V}_j$, it is easy to verify that all the hypotheses of Theorem~\ref{thm-new} 
hold, so that $H^{0,1}_{L^2}(\Omega)$ is Hausdorff.  The conclusion now follows from 
 Proposition~\ref{prop-hole1}.\end{proof}
\subsection{Proof of Corollary~\ref{cor-new3}} For each $\Omega_j\in \Uu$, by hypothesis we have $H^{0,1}_{L^2}(\Omega_j)=0$, and as we saw in the proof of Theorem~\ref{thm-new} above, $\check{H}^1(\Uu,\mathcal{O}_{L^2})=0$. Consequently, by part (3) of Theorem~\ref{thm-leray} we have $H^{0,1}_{L^2}(\Omega)=0$.

\section{$L^2$-\v{C}ech cohomology of an annulus between product domains}\label{sec-l2cechcomputation}
Let $N\geq 2$, and for  each  $j\in \{1,\dots, N\}$ let $U_j\subset\cb^{n_j}$ be a bounded domain, and $K_j$ 
a  compact set in $\cx^{n_j}$ such that
 $K_j\subset U_j$, and such that the  annulus  $ R_j = U_j\setminus K_j$ is  connected. Let $U=U_1\times\dots\times U_N$ and $K=K_1\times\dots\times K_N$. In this section we consider the domain
 \[ \wb = U\setminus K,\]
which is an annulus between a product domain and a product hole.  We let
  \begin{equation}
 \label{eq-omegaj}
 \Omega_j= U_1\times\dots\times R_j \times \dots\times U_N,\end{equation}
 where the $j$-th factor is $R_j$ and for $k\not=j$, the $k$-th factor is   $U_k$. We call the domains $R_j$ ($j=1,\dots, N$)  the {\em factor annuli} of $\wb$.
 We denote the  collection $\{\Omega_j, 1\leq j \leq N\}$ by $\Uu$, and note that $ \bigcup_{j=1}^N \Omega_j=\wb$, i.e., $\Uu$ is a cover of $\wb$ by open sets.
 In this section, we prove the following:
 \begin{prop}\label{prop-cechcomputation} With $\Uu$ as above, $\check{H}^1(\Uu, \mathcal{O}_{L^2})$ is Hausdorff, and vanishes if $n\geq 3$. 


  \end{prop}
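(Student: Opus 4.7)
The plan is to exploit the product structure of $\Uu$ to reduce the \v{C}ech cohomology computation to a linear-algebraic one. Let $A_j := \mathcal{O}_{L^2}(U_j)$, $B_j := \mathcal{O}_{L^2}(R_j)$, and let $r_j : A_j \to B_j$ be the injective restriction map. For any nonempty $I \subseteq \{1,\dots,N\}$, the intersection $\Omega_I := \bigcap_{j\in I}\Omega_j$ is the product whose $j$-th slot is $R_j$ for $j\in I$ and $U_j$ otherwise; the standard identification of the Bergman space of a product of bounded domains with the Hilbert tensor product of the factor Bergman spaces (via the product Bergman kernel, or an orthonormal basis argument) gives
\[
\mathcal{O}_{L^2}(\Omega_I) \;\cong\; \bigotimes_{j\in I} B_j \,\widehat\otimes\, \bigotimes_{j\notin I} A_j,
\]
and the \v{C}ech differentials become alternating sums of the restrictions $r_k\otimes\mathrm{id}$ acting on the $k$-th factor for $k\notin I$.

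The first task is to establish that $V_j := r_j(A_j)$ is a closed subspace of $B_j$. When $n_j\geq 2$, Hartogs' theorem extends any $f\in B_j$ holomorphically across the compact set $K_j$, and the extension is in $L^2(U_j)$ (being continuous on $K_j$, a set of finite measure), so $r_j$ is a continuous bijection of Hilbert spaces, hence an isomorphism by the open mapping theorem, and $V_j = B_j$. When $n_j=1$, one surrounds $K_j$ by a closed curve $\gamma\subset R_j$ and applies Cauchy's formula plus Cauchy--Schwarz, averaged over a family of such curves, to obtain $\|\tilde g\|_{L^2(K_j)}\lesssim\|\tilde g\|_{L^2(R_j)}$ uniformly in $\tilde g\in\mathcal{O}(U_j)\cap L^2(U_j)$; this makes $r_j$ a topological embedding onto the closed image $V_j$. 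In either case $B_j = V_j \oplus V_j^\perp$ is an orthogonal direct sum; identifying $A_j$ with $V_j$ via $r_j$ and distributing the tensor product over this splitting yields
\[
\mathcal{O}_{L^2}(\Omega_I) \;=\; \bigoplus_{T\subseteq I}\mathcal{H}_T, \qquad \mathcal{H}_T \;:=\; \bigotimes_{j\in T}V_j^\perp \,\widehat\otimes\, \bigotimes_{j\notin T}V_j.
\]

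Next I track how the \v{C}ech differential acts on this decomposition. The restriction map $\mathcal{O}_{L^2}(\Omega_I)\to\mathcal{O}_{L^2}(\Omega_{I\cup\{k\}})$ acts by $r_k$ on the $k$-th slot, which sends $V_k\subseteq A_k$ identically into $V_k\subseteq B_k$ (producing no $V_k^\perp$-component); it therefore carries the $\mathcal{H}_T$-summand into the $\mathcal{H}_T$-summand (with the usual simplicial sign) for each $T\subseteq I$. Hence the \v{C}ech complex is a Hilbert orthogonal direct sum, over $T\subseteq\{1,\dots,N\}$, of subcomplexes $C_T^\bullet$, where $C_T^k = \mathcal{H}_T^{\oplus d_{T,k}}$ with $d_{T,k} = \#\{I:T\subseteq I,|I|=k+1\}$ and the differential is the combinatorial \v{C}ech differential. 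After reindexing by $k' = k-|T|$, this combinatorial complex is the augmented simplicial cochain complex of the full simplex on the vertex set $T^c := \{1,\dots,N\}\setminus T$; since the full simplex is contractible, this augmented complex is acyclic whenever $T^c\neq\emptyset$. The only $T$ contributing to total cohomology is therefore $T=\{1,\dots,N\}$, which contributes $\mathcal{H}_{\{1,\dots,N\}}=V_1^\perp\,\widehat\otimes\,\cdots\,\widehat\otimes\,V_N^\perp$ in degree $N-1$.

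Specializing to degree $1$: if $N\geq 3$ the sole contribution sits in degree $N-1\geq 2$, so $\check H^1(\Uu,\mathcal{O}_{L^2})=0$; if $N=2$ we get $\check H^1(\Uu,\mathcal{O}_{L^2})\cong V_1^\perp\,\widehat\otimes\,V_2^\perp$, a Hilbert space, hence Hausdorff, which vanishes precisely when some $V_j^\perp=0$, i.e., when some $n_j\geq 2$, equivalently $n\geq 3$. This proves the proposition. The main technical obstacle is the closed-range property of $r_j$ for $n_j=1$, since it is what gives the orthogonal decomposition $B_j = V_j \oplus V_j^\perp$ its meaning; once that is in hand, the rest of the argument is linear algebra of Hilbert tensor products combined with the contractibility of the standard simplex.
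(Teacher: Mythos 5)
Your proof is correct, and it rests on the same two pillars as the paper's --- the closed-range property of the restriction map $r_j:\mathcal{O}_{L^2}(U_j)\to\mathcal{O}_{L^2}(R_j)$ (the paper's Lemma~\ref{lem-cr1}, proved exactly as you sketch: Hartogs for $n_j\geq 2$, a Cauchy-integral argument for $n_j=1$) and the distribution of the Hilbert tensor product over the orthogonal splitting $\mathcal{O}_{L^2}(R_j)=V_j\oplus V_j^{\perp}$ --- but it organizes the cohomology computation in a genuinely different and more unified way. The paper treats $N=2$ and $N\geq 3$ separately: for $N=2$ it computes $\operatorname{img}\delta=E_1\oplus E_2\oplus E_3=(E_4)^{\perp}$ directly, and for $N\geq 3$ it first quotients out the subspace $\mathcal{E}$ of cocycles extending to $U$, then runs an orthogonality argument on triple intersections to kill the $S_3$-component of each $f_{ij}$ and to match the resulting primitives $u_i^{ij}$ across indices. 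You instead split the \emph{entire} \v{C}ech complex as a finite orthogonal direct sum of subcomplexes $C_T^{\bullet}\cong\mathcal{H}_T\otimes D_T^{\bullet}$ indexed by $T\subseteq\{1,\dots,N\}$, where $D_T^{\bullet}$ is a finite-dimensional combinatorial complex; since the restriction maps become topological isomorphisms on each $\mathcal{H}_T$-summand, images are automatically closed, acyclicity of the (augmented) cochain complex of a nonempty simplex handles every $T$ with $T^c\neq\emptyset$, and one reads off $\check{H}^k(\Uu,\mathcal{O}_{L^2})$ in all degrees at once, with the sole contribution $V_1^{\perp}\widehat{\otimes}\cdots\widehat{\otimes}V_N^{\perp}$ sitting in degree $N-1$. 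This buys a cleaner, all-degrees statement and eliminates the paper's case split and its somewhat ad hoc consistency check $u_i^{ij}=u_i^{ik}$; the price is only the bookkeeping needed to verify that the \v{C}ech differential respects the $T$-grading, which you correctly reduce to the observation that $r_k$ maps $V_k$ into $V_k$ with no $V_k^{\perp}$-component. Two small imprecisions, neither affecting the result: for $T=\emptyset$ the subcomplex is the \emph{non-augmented} cochain complex of the simplex (there is no $|I|=0$ term), so it contributes $\mathcal{H}_{\emptyset}\cong\mathcal{O}_{L^2}(U)$ to $\check{H}^0$ rather than nothing --- harmless in degree $1$; and the closing assertion that $\check{H}^1$ vanishes ``precisely'' when some $n_j\geq 2$ over-claims, since $V_j^{\perp}$ can vanish for $n_j=1$ as well (e.g.\ when $K_j$ is polar), but the proposition only requires the implication $n\geq 3\Rightarrow$ vanishing, which you do establish.
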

   Note that pseudoconvexity does not play any direct role in the statement of this result. The proof will be based on a closed range property of the restriction map on Bergman spaces (see Lemma~\ref{lem-cr1}). Also, it is true that for 
   $n=2$, the group  $\check{H}^1(\Uu, \mathcal{O}_{L^2})$  is infinite dimensional, though we neither prove nor use this fact.
    A technique  similar to that used in the proof of Proposition~\ref{prop-cechcomputation} 
   was used to compute  the usual Dolbeault cohomology of some non-pseudoconvex domains in \cite{chak-archiv}.

\subsection{Closed range for restriction maps on Bergman spaces}\label{sec-bergman} For  open subsets $U,R\subset\cx^n$, where $R\subset U$, we denote by $\mathcal{O}_{L^2}(U)|_R$ the subspace of
$\mathcal{O}_{L^2}(R)$ consisting of restrictions of functions in $\mathcal{O}_{L^2}(U)$.  

\begin{lem}\label{lem-cr1}Let $U$ be a bounded domain in $\cx^n$, and let $K\subset U$ be a  compact subset. Set $R=U\setminus K$ and when $n\geq 2$, assume that  $R$ is connected. Then  $\mathcal{O}_{L^2}(U)|_R$ is a closed subspace of the Hilbert space $\mathcal{O}_{L^2}(R)$. 
\end{lem}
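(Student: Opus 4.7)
The plan is to show that the continuous linear restriction map $\rho\colon \mathcal{O}_{L^2}(U)\to \mathcal{O}_{L^2}(R)$, $f\mapsto f|_R$, is injective and bounded below; closedness of its image $\mathcal{O}_{L^2}(U)|_R$ then follows by standard Hilbert space theory. Injectivity is immediate from the identity theorem applied on the connected domain $U$: the open set $R$ is nonempty (since the compact set $K$ is a proper subset of the open bounded set $U$), so any $f\in \mathcal{O}(U)$ vanishing on $R$ vanishes on $U$.

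For the bounded-below property, I would argue by contradiction, extracting a sequence $\{f_m\}\subset \mathcal{O}_{L^2}(U)$ with $\|f_m\|_{L^2(U)}=1$ and $\|f_m\|_{L^2(R)}\to 0$. By weak compactness of the unit ball in a Hilbert space, pass to a subsequence $f_m\rightharpoonup F$ in $\mathcal{O}_{L^2}(U)$. Two things follow from this weak convergence. First, continuity of the restriction $L^2(U)\to L^2(R)$ together with weak lower semicontinuity of the norm give $\|F\|_{L^2(R)}\leq \liminf_m\|f_m\|_{L^2(R)}=0$, so $F|_R=0$ and by the identity theorem $F\equiv 0$ on $U$. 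Second, since the point evaluation $f\mapsto f(z)$ is a bounded linear functional on $\mathcal{O}_{L^2}(U)$ for each $z\in U$ (represented by the Bergman kernel at $z$), weak convergence yields pointwise convergence $f_m(z)\to F(z)=0$ for every $z\in U$.

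The sequence $\{f_m\}$, being bounded in $L^2(U)$, is locally uniformly bounded on $U$ by interior estimates for holomorphic functions, hence a normal family; together with the pointwise convergence to $0$ this gives, via Vitali's theorem, that $f_m\to 0$ locally uniformly on $U$. Since $K\Subset U$ is compact with finite Lebesgue measure, uniform convergence on $K$ forces $\|f_m\|_{L^2(K)}\to 0$, which combined with $\|f_m\|_{L^2(R)}\to 0$ contradicts $\|f_m\|_{L^2(U)}=1$. Hence $\rho$ is bounded below, and its image is closed in $\mathcal{O}_{L^2}(R)$.

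The delicate step is identifying the weak limit $F$ as identically zero on $U$; this rests on the connectedness of the ambient domain $U$ (automatic) and the nonemptiness of $R$, through the identity theorem. Notice that the hypothesis that $R$ be connected when $n\geq 2$ plays no role in this argument. One could alternatively, for $n\geq 2$, appeal directly to the Hartogs extension phenomenon to conclude that $\rho$ is in fact surjective (making the image trivially closed), but the approach above has the merit of handling $n=1$ and $n\geq 2$ on the same footing without invoking Hartogs.
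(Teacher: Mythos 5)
Your proof is correct, but it takes a genuinely different route from the paper's. The paper splits by dimension: for $n\geq 2$ it invokes the Hartogs extension phenomenon to get $\mathcal{O}_{L^2}(U)|_R=\mathcal{O}_{L^2}(R)$ (so the subspace is everything, trivially closed), and for $n=1$ it chooses a neighborhood $W$ of $K$ with polygonal boundary $bW\subset R$, uses Bergman's inequality to upgrade $L^2(R)$-convergence of $f_\nu|_R$ to uniform convergence on $bW$, and then uses the Cauchy integral over $bW$ to show the limit extends holomorphically across $K$. You instead give a single dimension-independent argument: the restriction map is injective by the identity theorem on the connected ambient domain $U$, and bounded below by a compactness argument (weak sequential compactness, Bergman-kernel point evaluations, Montel/Vitali, and the additivity $\|f\|_{L^2(U)}^2=\|f\|_{L^2(R)}^2+\|f\|_{L^2(K)}^2$), whence closed range. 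Each step checks out, and your observation that connectedness of $R$ is never used is accurate --- your argument proves a slightly more general statement. What the paper's approach buys, and yours does not, is the surjectivity of the restriction map when $n\geq 2$; that stronger fact (equivalently $\left(\mathcal{O}_{L^2}(U_j)|_{R_j}\right)^\perp=\{0\}$ for $n_j\geq 2$) is what the paper actually exploits later, in the proof of Proposition~\ref{prop-cechcomputation}, to kill the \v{C}ech group when $n\geq 3$. So your proof suffices for the lemma as stated, but if you were rewriting this section you would still need the Hartogs input separately. Conversely, your bounded-below formulation is equivalent to closedness here (for an injective bounded operator between Hilbert spaces, closed range and a lower bound are the same by the open mapping theorem), so nothing quantitative is lost; the gain is a uniform treatment of $n=1$ and $n\geq 2$ that avoids the Cauchy-integral construction entirely.
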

\begin{proof}If $n\geq 2$,  then by the  Hartogs extension theorem,  $\mathcal{O}_{L^2}(U)|_R=  \mathcal{O}_{L^2}(R)$, so the assertion is 
correct.  For $n=1$, by a classical argument (cf.  \cite[p. 143 ]{Ahl} or \cite[p. 195, Proposition~1.1]{conway}) there is a neighborhood $W$ of $K$ such that $W$ is contained in $U$, the boundary $bW$ is a union of finitely many closed polygons, and for any holomorphic function in $U$, we have a
Cauchy representation
\[ f(z)= \frac{1}{2\pi i}\int_{bW}\frac{f(\zeta)}{\zeta-z}d\zeta\]
valid for each $z\in K$.

Let $f_\nu \in \mathcal{O}_{L^2}(U)$, and suppose that $f_\nu|_R\to g$ in $\mathcal{O}_{L^2}(R)$ as  $\nu\to \infty$. By Bergman's inequality (cf. \cite[p. 155]{remmert}),  $\{f_\nu\}$ converges uniformly to $g$ when restricted to the compact subset  $bW$ of $R$.  Representing the holomorphic function $f_\nu$ on $K$ as the Cauchy integral over $bW$  as in the above formula, we see that $\{f_\nu\}$ converges uniformly on $K$ to a function given by the Cauchy integral of $g$.  It follows that $g$ extends to a function in $\mathcal{O}_{L^2}(U)$, which proves the lemma.
\end{proof}
\medskip

Let $\mathcal{Q}\csor\mathcal{R}$ denote the Hilbert  tensor product of Hilbert spaces $\mathcal{Q}$ and $\mathcal{R}$. In our application, we only consider Hilbert tensor products where for some domain $D\subset\cx^m$, the space
$\mathcal{Q}$ is a closed subspace of $\mathcal{O}_{L^2}(D)$ , and for some domain $V\subset \cx^n$, the space $\mathcal{R}$ is a closed subspace of $\mathcal{O}_{L^2}(V)$. Then $\mathcal{Q}\csor\mathcal{R}$ is the closure in
$\mathcal{O}_{L^2}(D\times V)$ of the linear span of functions of the form $(f\tensor g)(z,w)=f(z)g(w)$, where $f\in \mathcal{Q}$ and $g\in \mathcal{R}$. See \cite{prod} for more details on Hilbert tensor products.


 \subsection{Proof of Proposition~\ref{prop-cechcomputation}}
   We first consider the case when the number of factors $N=2$. To conclude that $\check{H}^1(\Uu, \bs)$ is Hausdorff, we need to show that the coboundary map
 \[ \delta: \check{C}^0(\Uu,\bs)\to  \check{C}^1(\Uu,\bs)\]
 has closed range. As a topological vector space, $\check{C}^1(\Uu,\bs)$ is  simply $\bs(\Omega_{12})$, since there is only one double intersection. Therefore, the closed range of
 $\delta$ will follow, if we show that   the map
 \[ \mathcal{O}_{L^2}(\Omega_1)\oplus \mathcal{O}_{L^2}(\Omega_2)\to \mathcal{O}_{L^2}(\Omega_{12}),\]
 given by
 \[ \delta(h_1, h_2)= h_2|_{\Omega_{12}} - h_1|_{\Omega_{12}}\]
 has closed range. Recall that $\Omega_1=R_1\times U_1$, $\Omega_2=U_1\times R_2$ and $\Omega_{12}=R_1\times R_2$.


Since  for $j=1,2$, by Lemma~\ref{lem-cr1} $\oc_{L^2}({U}_j)|_{{R}_j}$ is closed in $\oc_{L^2}({R}_j)$ we obtain a direct sum decomposition:
\[ \oc_{L^2}({R}_j)= \oc_{L^2}({U}_j)|_{{R}_j}\oplus \left(\mathcal{O}_{L^2}({U}_j)|_{{R}_j}\right)^\perp,\]
and it follows by the distributivity of the Hilbert tensor product over direct sums that
\begin{align*}\bs(\Omega_{12})&=
 \mathcal{O}_{L^2}({R}_1\times {R}_2)\\&= \mathcal{O}_{L^2}({R}_1)\csor  \mathcal{O}_{L^2}({R}_2)\\
 &= \bigoplus_{j=1}^4 E_j,
\end{align*}
where
\begin{align*}
 E_1&= \mathcal{O}_{L^2}({U}_1)|_{{R}_1}\csor
\mathcal{O}_{L^2}({U}_2)|_{{R}_2},\\
E_2 &= \left(\mathcal{O}_{L^2}({U}_1)|_{{R}_1}\right)^\perp\csor
\mathcal{O}_{L^2}({U}_2)|_{{R}_2},\\
E_3&= \mathcal{O}_{L^2}({U}_1)|_{{R}_1}\csor
\left(\mathcal{O}_{L^2}({U}_2)|_{{R}_2}\right)^\perp, &\text{  and }\\
E_4 &= \left(\mathcal{O}_{L^2}({U}_1)|_{{R}_1}\right)^\perp\csor
\left(\mathcal{O}_{L^2}({U}_2)|_{{R}_2}\right)^\perp,
\end{align*}
and for $j=1,2$,
$\left(\mathcal{O}_{L^2}(U_j)|_{R_j}\right)^\perp$
denotes the orthogonal complement of the closed subspace $\mathcal{O}_{L^2}(U_j)|_{R_j}$ in $\mathcal{O}_{L^2}(R_j)$.
Note that
\[ E_1 \oplus E_3= \mathcal{O}_{L^2}({U}_1)|_{{R}_1}\csor \oc_{L^2}({R}_2)=\mathcal{O}_{L^2}({U}_1\times {R}_2)|_{{R}_1\times {R}_2}\subset \img \delta\]
and
\[ E_1 \oplus E_2= \oc_{L^2}({R}_1)\csor \mathcal{O}_{L^2}({U}_2)|_{{R}_2} =\mathcal{O}_{L^2}({R}_1\times {U}_2)|_{{R}_1\times {R}_2}\subset \img \delta,\]
and by definition of $\delta$,
\begin{align*}
\img \delta &= \mathcal{O}_{L^2}({U}_1\times {R}_2)|_{{R}_1\times {R}_2}+ \mathcal{O}_{L^2}({R}_1\times {U}_2)|_{{R}_1\times {R}_2}\\
&= (E_1 \oplus E_3)+ ( E_1 \oplus E_2)\\
&=  E_1 \oplus E_2 \oplus E_3\\
&=\left(E_4\right)^\perp\\
&= \left(\left(\mathcal{O}_{L^2}(U_1)|_{R_1}\right)^\perp\csor
\left(\mathcal{O}_{L^2}(U_2)|_{R_2}\right)^\perp\right)^{\perp},
\end{align*}
Where the outer $\perp$ in the last line, and the $\perp$ in the previous to last line denote orthogonal complementation in
$\bs(\Omega_{12})$. It follows that $\img \delta$ is closed,
and we obtain an isomorphism of Hilbert spaces
\[ \check{H}^1(\Uu,\bs)\cong E_4= \left(\mathcal{O}_{L^2}({U}_1)|_{{R}_1}\right)^\perp\csor
\left(\mathcal{O}_{L^2}({U}_2)|_{{R}_2}\right)^\perp,\]
valid when the number of factors $N=2$ and the dimension $n\geq 2$.

Now assume $n\geq 3$. Since $n=n_1+n_2$,  there is a $j\in \{1,2\}$ such that $n_j\geq 2$.  By Hartogs' phenomenon, $ \mathcal{O}_{L^2}(U_j)|_{R_j} = \mathcal{O}_{L^2}(R_j)$ so we have $ (\mathcal{O}_{L^2}(U_j)|_{R_j})^\perp=\{0\}$, and consequently,
$\check{H}^1(\Uu,\mathcal{O}_{L^2})=0$.


 Now we will show that for $N\geq 3$ (which forces $n\geq 3$), we again have $\check{H}^1(\Uu,\mathcal{O}_{L^2})=0$.  A similar  result (with one dimensional factors) was  proved by   Frenkel  \cite[Proposition~31.1]{frenkel} for the structure sheaf $\oc$.

Let $N\geq 3$, and suppose that $f=(f_{ij})_{i<j}\in \check{Z}^1(\Uu,\mathcal{O}_{L^2})$. We will show that there is a $u\in \check{C}^0(\Uu, \mathcal{O}_{L^2})$ such that $f=\delta u$.

Denote by $\mathcal{E}$ the subspace of  $\check{Z}^1(\Uu,\mathcal{O}_{L^2})$ consisting of $f$ such that each $f_{ij}\in\bs(\Omega_{ij})$ extends holomorphically to a function in $\bs(U)$, so that on $\Omega_{ij}$:
\begin{equation}\label{eq-fij1}
f_{ij} \in \bs(U_i)|_{R_i}\csor \bs(U_j)|_{R_j}\csor \bs(U_{ij}') \,
\end{equation}
where the tensor product has been reordered (and we will do this in the sequel without further comment)
 and $U_{ij}'$ is the product of all the $U_k$'s except $U_i$ and $U_j$. Define $u\in \check{C}^0(\Uu, \bs)$ by setting $u_1=0$ on $\Omega_1$, and for $j\geq 2$, $u_j=f_{1j}|_{\Omega_j}$, where we continue to denote the extension of $f_{ij}$ to $U$ by the same symbol. Then on the set $\Omega_{ij}$ we have
 \[ u_j-u_i=f_{1j}-f_{1i}=f_{ij},\]
 since $\delta f=0$. We have  therefore $\delta u=f$.

Hence we may assume without loss of generality that $f\in \check{Z}^1(\Uu,\mathcal{O}_{L^2})$ actually belongs to the orthogonal complement
$\mathcal{E}^\perp$ of $\mathcal{E}$ in $\check{Z}(\Uu,\mathcal{O}_{L^2})$.
Noting that  $\bs(\Omega_{ij})=\bs(R_i) \csor \bs(R_j)\csor \bs(U_{ij}')$,  and that for each $k$ we have  $\bs(R_k)= \bs(U_k)|_{R_k}\oplus\bs(U_k)|_{R_k}^\perp$ by Lemma~\ref{lem-cr1}, we have from \eqref{eq-fij1} for each $i<j$,
\begin{equation}\label{eq-fij2}
f_{ij} \in S_1\bigoplus S_2\bigoplus S_3 \subset \bs(\Omega_{ij}),
\end{equation}
where
\begin{align*}
S_1&= \bs(U_i)|_{R_i}^\perp \csor \bs(U_j)|_{R_j}\csor \bs(U_{ij}'),\\
S_2&= \bs(U_i)|_{R_i}\csor \bs(U_j)|_{R_j}^\perp\csor \bs(U_{ij}'),\\
S_3&= \bs(U_i)|_{R_i}^\perp\csor \bs(U_j)|_{R_j}^\perp\csor \bs(U_{ij}').
\end{align*}
We claim that {\em the component of $f_{ij}$ along $S_3$ vanishes} i.e., we have
\begin{equation}\label{eq-fij3}
f_{ij}\in S_1\bigoplus S_2.
\end{equation}

To prove the claim,  since $N\geq 3$, there is a $k\in \{1,2, \dots, N\}$ such that $k$ is distinct from both $i$ and $j$.
  Let  $U_{ijk}'$ be the
product of all the $U_\ell$'s except $U_i, U_j$ and $U_k$. Consider the restriction map $\rho$
 from $\bs(\Omega_{ij})$ to
$\bs(\Omega_{ijk})$.  Since $\Omega_{ij}= R_i\times R_j\times R_k\times U_{ijk}'$ and
$\Omega_{ijk}=  R_i\times R_j\times R_k\times R_{ijk}'$, the restriction map is a tensor product:
\[ \rho = {\rm id}_{\bs(R_i)}\csor  {\rm id}_{\bs(R_j)}\csor \rho_k\csor 
     {\rm id}_{\bs(U_{ijk}')},\]
     where $\rho_k$ is the restriction map from $\bs(U_k)$ to $\bs(R_k)$.  Consequently, we have a tensor product 
     representation
     \[ S_3|_{\Omega_{ijk}}= \bs(U_i)|_{R_i}^\perp\csor \bs(U_j)|_{R_j}^\perp\csor\bs(U_k)|_{R_k}\csor \bs(U_{ijk}').\]
     Denote by $p_3$ the orthogonal projection from $\bs(\Omega_{ij})$ to $S_3$, and by $P_3$ the orthogonal projection from $\bs(\Omega_{ijk})$ onto
$S_3|_{\Omega_{ijk}}$. Then we have the diagram
   \begin{equation}\label{eq-diag}  
\begin{CD}
\mathcal{O}_{L^2}(\Omega_{ij})     @> p_3>>  S_3\\
@VV\rho V        @VV\rho V\\\mathcal{O}_{L^2}(\Omega_{ijk})     @>P_3>> S_3|_{\Omega_{ijk}}
\end{CD}\end{equation}
  which commutes, since both $\rho\circ p_3$ and $P_3 \circ  \rho$ are equal to $\pi_i\csor \pi_j\csor \rho_k \csor  {\rm id}_{\bs(U_{ijk}')}$, where $\pi_i: \bs(U_i)\to \bs(U_i)|_{R_i}^\perp$ and $\pi_j: \bs(U_j)\to \bs(U_j)|_{R_j}^\perp$ are the orthogonal projections.
   
 We  show that $P_3(f_{ij}|_{\Omega_{ijk}})=0$.  From $\delta f=0$, we conclude that
\begin{equation}\label{eq-cocycle}
 f_{ij}|_{\Omega_{ijk}} =f_{ik}|_{\Omega_{ijk}}-f_{jk}|_{\Omega_{ijk}}.
\end{equation}
Now,
\[f_{ik}|_{\Omega_{ijk}}\in  \bs(U_i)|_{R_i}^\perp\csor \bs(U_j)|_{R_j}\csor \bs(U_k)|_{R_k}^\perp\csor \bs(U_{ijk}'),\]
and
\[ f_{jk}|_{\Omega_{ijk}}\in \bs(U_i)|_{R_i}\csor \bs(U_j)|_{R_j}^\perp\csor \bs(U_k)|_{R_k}^\perp\csor \bs(U_{ijk}'),\] so that
both $f_{ik}|_{\Omega_{ijk}}$ and $f_{jk}|_{\Omega_{ijk}}$ lie in subspaces of $\bs(\Omega_{ijk})$ which are orthogonal to $S_3|_{\Omega_{ijk}}$.
Therefore $P_3(f_{ik}|_{\Omega_{ijk}})= P_3(f_{jk}|_{\Omega_{ijk}})=0$. Therefore, by \eqref{eq-cocycle}, 
we see that  $P_3(f_{ij}|_{\Omega_{ijk}})=0$. 

Now, by the commutativity of the diagram \eqref{eq-diag}, we have that 
\[ \rho(p_3(f_{ij}))= P_3(\rho(f_{ij}))= P_3(f_{ij}|_{\Omega_{ijk}})=0,\]
and since by analytic continuation $\rho$ is injective, we have $p_3(f_{ij})=0$. Therefore,  the claim \eqref{eq-fij3} follows.

Denote now by $P_1$ and $P_2$ the projections from $\bs(\Omega_{ijk})$ onto $S_1|_{\Omega_{ijk}}$ and $S_2|_{\Omega_{ijk}}$ respectively. Note that 
\begin{equation}\label{eq-s1res}
\begin{aligned}
S_1|_{\Omega_{ijk}}&= \bs(U_i)|_{R_i}^\perp \csor \bs(U_j)|_{R_j}\csor\bs(U_k)|_{R_k} \csor\bs(U_{ij}'),\\
S_2|_{\Omega_{ijk}}&= \bs(U_i)|_{R_i}\csor \bs(U_j)|_{R_j}^\perp\csor\bs(U_k)|_{R_k} \csor\bs(U_{ij}'),\\
\end{aligned}
\end{equation}
By the representation in \eqref{eq-s1res} above, we see that  $P_1(f_{ij})$ extends holomorphically to an element $(-u^{ij}_i)$ of $\bs(\Omega_i)$, and $P_2(f_{ij})$ extends holomorphically to an element $u_j^{ij}$ of $\bs(\Omega_j)$.  Therefore we have \begin{equation}\label{eq-fij4}
 f_{ij} = u^{ij}_j|_{\Omega_{ij}} - u^{ij}_i|_{\Omega_{ij}}.
\end{equation}
We note two features of this decomposition. First, it is independent of the choice of the auxiliary index $k$, since the decomposition \eqref{eq-fij2} in no way depends on $k$. Second, by construction, $u^{ij}_i|_{\Omega_{ijk}}$ and $u^{ij}_j|_{\Omega_{ijk}}$ belong to the orthogonal subspaces
$S_1|_{\Omega_{ijk}}$ and $S_2|_{\Omega_{ijk}}$ of $\bs(\Omega_{ijk})$. (The orthogonality is immediate from 
\eqref{eq-s1res} above).

We claim that if $k$ is an index distinct from $i$ and $j$, we have $u^{ij}_i=u^{ik}_i$. To see this, we substitute expressions like \eqref{eq-fij4} into \eqref{eq-cocycle}, and obtain, that when restricted to $\Omega_{ijk}$, we have
\[ (u^{jk}_k - u^{jk}_j)- (u^{ik}_k- u^{ik}_i)+ (u^{ij}_j- u^{ij}_i)=0,\]
which gives rise to the condition that
\[ (u^{jk}_k|_{\Omega_{ijk}}- u^{ik}_k|_{\Omega_{ijk}})
+ (u^{ij}_j|_{\Omega_{ijk}}- u^{jk}_j|_{\Omega_{ijk}}) + (u^{ik}_i|_{\Omega_{ijk}} - u^{ij}_i|_{\Omega_{ijk}})=0.\]
The three terms of the above sum belong to three orthogonal subspaces of $\bs(\Omega_{ijk})$.
The first term is in $\bs(U_i)|_{R_i}\csor \bs(U_j)|_{R_j}\csor\bs(U_k)|_{R_k}^\perp \csor\bs(U_{ij}')$,
the second term is in $S_2|_{\Omega_{ijk}}$ and the third term is in $S_1|_{\Omega_{ijk}}$, where the notation is as in
\eqref{eq-s1res}. Therefore, all three terms vanish.  By analytic continuation,
 it follows that
there is for each $i\in \{1,\dots, N\}$ an $u_i\in \bs(\Omega_i)$ such that if $u=(u_i)_{i=1}^N \in \check{C}^0(\Uu, \bs)$, then
$\delta u =f$. It follows now that $\check{H}^1(\Uu,\bs)=0$ if $N\geq 3$.




\section{Proof of Theorem~\ref{thm-W1}}
 \subsection{The $L^2$-Dolbeault cohomology of an annulus  between product domains}
 Let $\wb=U\setminus K\subset\cx^n$ be as in Section~\ref{sec-l2cechcomputation},
 an annulus between the products $U=U_1\times\dots\times U_N$ and $K=K_1\times\dots\times K_N$,
 and let $R_j=U_j\setminus K_j\subset \cx^{n_j}$ denote the factor annuli for $j=1,\dots, N$. In this section, we compute the $L^2$-Dolbeault cohomology of $\wb$:

\begin{prop}\label{prop-specialcase}Suppose that for each $j$,  $H^{0,1}_{L^2}(R_j)$ is  Hausdorff. Then  $H^{0,1}_{L^2}(\wb)$ is  Hausdorff. Further, if $H^{0,1}_{L^2}(U_j)=0$ for each $j$, then  $H^{0,1}_{L^2}(\wb)$ vanishes if $n\geq 3$.
\end{prop}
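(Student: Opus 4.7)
The plan is to apply Theorem~\ref{thm-leray} to the cover $\Uu=\{\Omega_j\}_{j=1}^N$ of $\wb$. Proposition~\ref{prop-cechcomputation} already tells us that $\check H^1(\Uu,\bs)$ is Hausdorff, and vanishes when $n\geq 3$, so it remains only to control the local $L^2$-Dolbeault cohomology $H^{0,1}_{L^2}(\Omega_j)$, where the open set $\Omega_j=U_1\times\cdots\times R_j\times\cdots\times U_N$ is itself a product.

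For the Hausdorff assertion I would invoke a K\"unneth-type formula for $L^2$-Dolbeault cohomology of products of bounded domains, of the type developed in \cite{prod}. Applied to $\Omega_j$, this presents $H^{0,1}_{L^2}(\Omega_j)$ as a Hilbert direct sum of Hilbert tensor products, each summand carrying a single ``$\dbar$-cohomology slot'' equal either to $H^{0,1}_{L^2}(R_j)$ or to some $H^{0,1}_{L^2}(U_k)$ with $k\neq j$, and Bergman spaces in the remaining slots. For the formula to yield a Hausdorff group, each factor cohomology must be Hausdorff. The factor $R_j$ is Hausdorff by hypothesis; for the factor $U_k$, if $n_k=1$ then $U_k\Subset\cx$ is a bounded planar domain, on which the Cauchy--Pompeiu integral immediately gives $H^{0,1}_{L^2}(U_k)=0$, while if $n_k\geq 2$ then Proposition~\ref{prop-hole2} applied to the annulus $R_k=U_k\setminus K_k$ upgrades the hypothesis on $H^{0,1}_{L^2}(R_k)$ to Hausdorffness of $H^{0,1}_{L^2}(U_k)$. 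With $H^{0,1}_{L^2}(\Omega_j)$ Hausdorff for every $j$, part~(1) of Theorem~\ref{thm-leray} delivers the Hausdorffness of $H^{0,1}_{L^2}(\wb)$.

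For the vanishing statement when $n\geq 3$, the extra hypothesis $H^{0,1}_{L^2}(U_k)=0$ for every $k$ suppresses all K\"unneth summands in $H^{0,1}_{L^2}(\Omega_j)$ except the one with cohomology slot in position $j$; analogously, $H^{0,1}_{L^2}(\Omega_{ij})$ reduces to exactly two orthogonal K\"unneth summands, indexed by whether the cohomology slot sits in position $i$ or position $j$. Since $\check H^1(\Uu,\bs)=0$ for $n\geq 3$, part~(2) of Theorem~\ref{thm-leray} identifies $H^{0,1}_{L^2}(\wb)$ with the space of compatible tuples $(\gamma_k)\in\bigoplus_k H^{0,1}_{L^2}(\Omega_k)$ satisfying $\gamma_i|_{\Omega_{ij}}=\gamma_j|_{\Omega_{ij}}$ for $i\neq j$. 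For each such pair, the two restrictions lie in the two orthogonal summands of $H^{0,1}_{L^2}(\Omega_{ij})$, so the equality forces each to vanish separately. Finally, the restriction $H^{0,1}_{L^2}(\Omega_i)\to H^{0,1}_{L^2}(\Omega_{ij})$ acts, in the K\"unneth picture, as the identity on every slot except slot $j$, where it is the Bergman-space restriction $\bs(U_j)\to\bs(R_j)$; this map is injective by analytic continuation, so the induced map on the Hilbert tensor product is also injective, forcing $\gamma_i=0$ for each $i$ and hence $H^{0,1}_{L^2}(\wb)=0$.

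The principal obstacle will be the technical verification that the restriction map on $L^2$-Dolbeault cohomology is genuinely compatible with the K\"unneth decomposition and that the relevant summands are orthogonal in the Hilbert tensor product sense, so that compatibility of classes can be checked one summand at a time. Once the Hilbert-space bookkeeping is supplied by the results of \cite{prod} and by Lemma~\ref{lem-cr1}, the remainder of the argument is a direct assembly of Theorem~\ref{thm-leray} with the \v Cech computation of Proposition~\ref{prop-cechcomputation}.
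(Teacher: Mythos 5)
Your proposal is correct and follows essentially the same route as the paper: the same cover $\Uu$, the Čech input from Proposition~\ref{prop-cechcomputation}, the upgrade of Hausdorffness from $H^{0,1}_{L^2}(R_k)$ to $H^{0,1}_{L^2}(U_k)$ via Proposition~\ref{prop-hole2}, the Künneth decomposition from \cite{prod, spec} for the $\Omega_j$ and $\Omega_{ij}$, and the orthogonality-plus-injectivity argument forcing compatible tuples to vanish when $n\geq 3$. The ``technical verification'' you flag at the end is exactly the content the paper supplies in displays \eqref{eq-omegaijcohomology}--\eqref{eq-intersection}, so no genuine gap remains.
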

\begin{proof}
By Proposition~\ref{prop-cechcomputation}, the \v{C}ech group $\check{H}^1(\Uu,Z^{0,0}_{L^2})$ is
Hausdorff. Now in the cover $\Uu$, we can write $\Omega_j=U_1\times \dots\times R_j \times \dots U_N$. If $n_j=1$, then $H^{0,1}_{L^2}(U_j)=0$ (and therefore Hausdorff), and if $n_j\geq 2$, then by Proposition~\ref{prop-hole2}, since each $H^{0,1}_{L^2}(R_j)$ is  Hausdorff, and $R_j=U_j\setminus K_j$ is an annulus,   it follows that each $H^{0,1}_{L^2}(U_j)$ is  also Hausdorff.
It follows from the results  of \cite{prod, spec}  regarding  the $L^2$-cohomology of product domains, that since $\Omega_j$ is a product of domains whose $L^2$-Dolbeault cohomology
is Hausdorff in degrees $(0,0)$ and $(0,1)$, the cohomology $H^{0,1}_{L^2}(\Omega_j)$ is also Hausdorff.
Now by Part (1) of Theorem~\ref{thm-leray}, since we have covering $\Uu$ of $\wb$, such that for each $\Omega_j\in \Uu$ we have $H^{0,1}_{L^2}(\Omega_j)$ Hausdorff, as well as the \v{C}ech group $\check{H}^1(\Uu,Z^{0,0}_{L^2})$
Hausdorff, we conclude that $H^{0,1}_{L^2}(\wb)$ is Hausdorff.

Now let $n\geq 3$.  We apply part (2) of Theorem~\ref{thm-leray}, and assume that $H^{0,1}(U_j)=0$ for each $j$. By Proposition~\ref{prop-cechcomputation},
 $\check{H}^1(\mathfrak{U}, \mathcal{O}_{L^2})=0$, so the cohomology $H^{0,1}_{L^2}(\Omega)$ is isomorphic
 to a subspace of
 $\bigoplus_{k=1}^N H^{0,1}_{L^2}(\Omega_k) $ contained in
\[
\left\{ \left(\gamma_k\right)_{k=1}^N, \gamma_k \in H^{0,1}_{L^2}(\Omega_k) \left| \text{  for $i\not=j$}, \gamma_i|_{\Omega_{ij}}= \gamma_j|_{\Omega_{ij}}\right.\right\}.
\]
It is therefore sufficient to show that the above space vanishes.
Recall that
\[ \Omega_1=  R_1\times U_2 \times U_{12}', \text{ and }  \Omega_2= U_1\times R_2\times  U_{12}',\]
where $U_{12}'= U_3\times\dots \times U_{N}$. Then
\[ \Omega_{12}= R_1\times R_2 \times  U_{12}'.\]
We have  by the K\"unneth formula for $L^2$-cohomology (cf. \cite{prod, spec}),
\[ H^{0,1}_{L^2}(\Omega_1)= H^{0,1}_{L^2}(R_1)\csor H^{0,0}_{L^2}(U_2) \csor H^{0,0}_{L^2}(U_{12}'),\]
and
\[ H^{0,1}_{L^2}(\Omega_2)= H^{0,0}_{L^2}(U_1)\csor H^{0,1}_{L^2}(R_2) \csor H^{0,0}_{L^2}(U_{12}'),\]
where the other terms vanish since $H^{0,1}(U_j)=0$ for each $j$. Similarly we obtain
\begin{equation}
 H^{0,1}(\Omega_{12})
 =  H^{0,1}_{L^2}(R_1) \csor H^{0,0}_{L^2}(R_2)\csor H^{0,0}(U_{12}')\bigoplus  H^{0,0}_{L^2}(R_1) \csor H^{0,1}_{L^2}(R_2)\csor H^{0,0}_{L^2}(U_{12}'),
 \label{eq-omegaijcohomology}
 \end{equation}
 Note also that the two direct summands in \eqref{eq-omegaijcohomology} are orthogonal to each other thanks to the tensor nature of the inner product on $H^{0,1}_{L^2}(\Omega_{12})$ (see \cite{prod}).  Consider now the restriction map
 \begin{equation}\label{eq-restriction1}
   H^{0,1}_{L^2}(\Omega_1)\to H^{0,1}_{L^2}(\Omega_{12}),\end{equation}
written as $\gamma\to \gamma|_{\Omega_{12}}$ whose image is
\begin{equation}\label{eq-range1}
 H^{0,1}_{L^2}(\Omega_1)|_{\Omega_{12}}\cong H^{0,1}_{L^2}(R_1) \csor H^{0,0}(U_2)|_{R_2} \csor H^{0,0}_{L^2}(U_{12}'),
\end{equation}
 and the map \eqref{eq-restriction1} may be represented as a tensor product of maps
 \[ {\rm id}_{H^{0,1}_{L^2}(R_1)} \csor\rho\,\csor {\rm id}_{H^{0,0}_{L^2}(U_{12}')},\]
 where
 \[  \rho: H^{0,0}_{L^2}(U_2)\to H^{0,0}_{L^2}(R_2)\]
 is the restriction map $f\mapsto f|_{R_2}$ from $H^{0,0}_{L^2}(U_2)= \mathcal{O}_{L^2}(U_2)$ to $H^{0,0}_{L^2}(R_2)=\mathcal{O}_{L^2}(R_2)$,
 which is injective by analytic continuation, since $R_2$ is connected. Therefore the map \eqref{eq-restriction1} is also injective, being the tensor
 product of injective maps. A similar reasoning shows  that the restriction map
 \begin{equation}\label{eq-restriction2}
   H^{0,1}_{L^2}(\Omega_2)\to H^{0,1}_{L^2}(\Omega_{12}),\end{equation}
is also injective, and has image
\begin{equation}\label{eq-range2}
H^{0,1}_{L^2}(\Omega_2)|_{\Omega_{12}}\cong H^{0,0}_{L^2}(U_1)|_{R_1} \csor H^{0,1}_{L^2}(R_2) \csor H^{0,0}_{L^2}(U_{12}').
\end{equation}

 The subspaces of  $H^{0,1}_{L^2}(\Omega_{12})$ given by \eqref{eq-range1} and \eqref{eq-range2} are orthogonal, being contained in different summands of the orthogonal direct sum \eqref{eq-omegaijcohomology}, so that we have $ H^{0,1}_{L^2}(\Omega_1)|_{\Omega_{12}} \cap  H^{0,1}_{L^2}(\Omega_2)|_{\Omega_{12}}=\{0\}$, and since this reasoning applies if 1 and 2 are replaced by $i$ and $j$ with $i\not= j$, we see that
 \begin{equation}     H^{0,1}_{L^2}(\Omega_i)|_{\Omega_{ij}} \cap  H^{0,1}_{L^2}(\Omega_j)|_{\Omega_{ij}}=\{0\},
\label{eq-intersection}
\end{equation}
whenever $i,j\in \{1,\dots, N\}$.
Now in \eqref{eq-image2}, let the $N$-tuple $(\gamma_k)_{k=1}^N \in \bigoplus_{k=1}^N H^{0,1}_{L^2}(\Omega_k)$ be such that $\gamma_i|_{\Omega_{ij}}=
\gamma_j|_{\Omega_{ij}}$ whenever $i\not =j$. Therefore, by \eqref{eq-intersection}, we have $\gamma_i|_{\Omega_{ij}}=0$ for all $i\not=j$.
Now the same reasoning that shows that the map \eqref{eq-restriction1} is injective shows that the restriction map  $ H^{0,1}_{L^2}(\Omega_i)\to H^{0,1}(\Omega_{ij})$ is injective when $i\not=j$, and this shows that $\gamma_i=0$ for each $i$.  Part (2) of Theorem~\ref{thm-leray} now shows that
$H^{0,1}_{L^2}(\wb)=0$.
\end{proof}
\subsection{Proof of Theorem~\ref{thm-W1}}
We begin by noting the following variant of Theorem~\ref{thm-W1} with minimal boundary regularity, when the factors are each one dimensional:
\begin{prop} \label{prop-W1}For $j=1,\dots, n$, let $K_j$ be a compact subset of $\cx$, and let $K=K_1\times \dots\times K_n\subset\cx^n$ be their cartesian product. Then if $K$ is regular, we have $H^{0,n-1}_{W^1}(K)=0$.
\end{prop}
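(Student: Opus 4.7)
\smallskip

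My strategy is to invoke Proposition~\ref{prop-hole1} for a judiciously chosen ambient domain $\wt\Omega$, after first establishing $L^2$-Hausdorffness of $H^{0,1}_{L^2}(\wt\Omega\setminus K)$ via Corollary~\ref{1-dim}. I would take $\wt\Omega$ to be a polydisc $D_R^n$ of radius $R$ large enough that $K\Subset\wt\Omega$. Then $\wt\Omega$ is bounded and pseudoconvex (in fact convex), so by H\"ormander's theorem $H^{0,1}_{L^2}(\wt\Omega)=0$. Set $\Omega=\wt\Omega\setminus K$, which is nonempty for $R$ large.

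The only topological point to verify is that $\Omega$ is connected. Fix a basepoint $b=(b_1,\dots,b_n)\in\Omega$ with each $b_j$ chosen in $D_R\setminus K_j$ very close to the boundary circle of the factor disc. Given any $p\in\Omega$, there is some index $j_0$ with $p_{j_0}\notin K_{j_0}$. Keeping the $j_0$-th coordinate fixed, successively move each remaining coordinate $p_j$ ($j\neq j_0$) to $b_j$ along the straight-line path in $\cx$; since $D_R$ is convex the path remains inside $\wt\Omega$, and since $p_{j_0}\notin K_{j_0}$ stays fixed throughout the motion, the product hypothesis $K=K_1\times\dots\times K_n$ ensures the $n$-tuple never enters $K$. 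After this procedure all coordinates except the $j_0$-th agree with those of $b$, and then the $j_0$-th coordinate may itself be moved along a straight line to $b_{j_0}$ without entering $K$, because every other coordinate already lies outside its corresponding $K_j$. Thus $p$ is connected to $b$, proving that $\Omega$ is connected.

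With $\wt\Omega$ a bounded domain satisfying $H^{0,1}_{L^2}(\wt\Omega)=0$ and $\Omega$ a nonempty connected annulus with product hole whose factors are one-dimensional, Corollary~\ref{1-dim} applies and yields that $H^{0,1}_{L^2}(\Omega)$ is Hausdorff. Since $K$ is assumed regular and sits inside the bounded open set $\wt\Omega$ with connected complement $\Omega$, Proposition~\ref{prop-hole1} then directly delivers $H^{0,n-1}_{W^1}(K)=0$, as required. The only piece of genuinely new content in the argument is the connectedness verification, which I expect to present no serious obstacle: the convexity of the polydisc combined with the product structure of $K$ reduces it to successive straight-line motions in single coordinates.
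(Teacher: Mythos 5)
Your proof is correct and follows essentially the same route as the paper: the paper also encloses $K$ in a large polydisc $U_1\times\dots\times U_n$, notes that the annulus $U\setminus K$ is connected, and applies Proposition~\ref{prop-hole1}, with the Hausdorffness of $H^{0,1}_{L^2}(U\setminus K)$ supplied by the product-hole result (Corollary~\ref{1-dim}/Proposition~\ref{prop-specialcase}). You merely make explicit two points the paper leaves terse --- the coordinate-by-coordinate connectedness argument and the explicit citation of Corollary~\ref{1-dim} --- both of which are fine.
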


\begin{proof}For each $j$, let $U_j$ be a large disc containing the compact $K_j$. Apply Proposition~\ref{prop-hole1}, with $\wt{\Omega}= U=U_1\times \dots\times U_n$, 
and $K=K_1\times\dots\times K_n$. A simple topological reasoning shows that for large enough $U_j$, the annulus $U\setminus K$ is connected.  Since $U$ is pseudoconvex, the result follows.
\end{proof}

The proof of the general case is similar:
\begin{proof}[Proof of  Theorem~\ref{thm-W1}]
For each $j$, choose a large ball $U_j\subset \cx^{n_j}$ such that $V_j\Subset U_j$. If $n_j\geq 2$,  since $H^{0,n_j-1}_{W^1}(V_j)=0$,
and $H^{0,1}_{L^2}(U_j)=0$, it follows by Theorem~\ref{thm-hole} that $H^{0,1}_{L^2}(R_j)$ is Hausdorff, where $R_j=U_j\setminus \ol{V}_j$. In case $n_j=1$, then $H^{0,1}_{L^2}(R_j)=0$, and so is Hausdorff. Now let $\wb=U\setminus \ol{V}$, where $U=U_1\times U_2\times\dots \times U_N$.
Therefore, by Proposition~\ref{prop-specialcase}, we see that $H^{0,1}_{L^2}(\wb)$ is Hausdorff. We now invoke
Theorem~\ref{thm-hole} again to conclude that $H^{0,n-1}_{W^1}(V)=0$.
\end{proof}

\section{Proof of Theorem~\ref{thm-cohomology}}


\subsection{Extension of solvability}
In this section, in order to prove Theorem~\ref{thm-cohomology} we consider the following situation. Let  $D \Subset \wt{\Omega}$ be bounded domains in
$\cx^n$, and let $K$ be a compact set contained in $D$. We consider the relation between the cohomologies of
the annuli
\[ \Omega= \wt{\Omega}\setminus K\]
and
\[ \wb=D\setminus K.\]
\begin{prop}\label{prop-siu} Suppose that   in some degree $(p,q)$, $q\geq 1$,  we have  $H^{p,q}_{L^2}(\wt{\Omega})=0$, and  the $L^2$-cohomology $H^{p,q}_{L^2}(\wb)$ is Hausdorff. Then:

(a) $H^{p,q}_{L^2}(\Omega)$ is Hausdorff.

(b) The natural restriction map on cohomology
\begin{equation}\label{eq-restr}
H^{p,q}_{L^2}(\Omega)\to H^{p,q}_{L^2}(\wb)
\end{equation}
 is injective. Consequently, if  $H^{p,q}(\wb)=0$ then  $H^{p,q}(\Omega)=0$.

\end{prop}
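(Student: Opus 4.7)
The plan is to first establish an extension principle stating that a $\dbar$-closed form on $\Omega$ which is $\dbar$-exact when restricted to the ``small'' annulus $\wb$ is already $\dbar$-exact on the ``large'' annulus $\Omega$. Both claims~(a) and (b) will then follow readily. The tool is a standard cutoff-and-patch construction, made possible by the solvability of $\dbar$ on $\wt{\Omega}$.

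\textbf{Step 1 (the extension principle).} Let $f\in Z^{p,q}_{L^2}(\Omega)$ and suppose there is $u\in A^{p,q-1}_{L^2}(\wb)$ with $\dbar u=f$ on $\wb$. Choose a cutoff $\chi\in\mathcal{C}^\infty_0(\cx^n)$ equal to $1$ in a neighborhood of $K$ and with $\supp\chi\Subset D$. Since $\supp\chi\subset D$, the product $\chi u$ extends by zero to a form in $L^2_{p,q-1}(\Omega)$, and the form
\[
\wt{g}=\begin{cases} f-\dbar(\chi u) & \text{on } \wb,\\ f & \text{on } \wt{\Omega}\setminus \supp\chi,\end{cases}
\]
is well-defined on the union $\wb\cup(\wt{\Omega}\setminus\supp\chi)$ and, because $\wt{g}\equiv 0$ on $\{\chi=1\}\cap \wb$ (a neighborhood of $K$), extends by zero to a form in $L^2_{p,q}(\wt{\Omega})$. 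A direct check shows $\dbar\wt{g}=0$ on $\wt{\Omega}$. Invoking the hypothesis $H^{p,q}_{L^2}(\wt{\Omega})=0$, pick $\wt{v}\in A^{p,q-1}_{L^2}(\wt{\Omega})$ with $\dbar\wt{v}=\wt{g}$. Define
\[
v=\begin{cases} \wt{v}+\chi u & \text{on }\wb,\\ \wt{v} & \text{on }\wt{\Omega}\setminus\supp\chi.\end{cases}
\]
The two expressions agree on $\wb\setminus\supp\chi$ (where $\chi u=0$), so $v\in A^{p,q-1}_{L^2}(\Omega)$, and on each piece a direct computation yields $\dbar v=f$. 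Thus $f\in B^{p,q}_{L^2}(\Omega)$.

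\textbf{Step 2 (deducing (a) and (b)).} The restriction map \eqref{eq-restr} is well-defined and continuous. Its injectivity is immediate from Step~1: if $[f|_{\wb}]=0$ in $H^{p,q}_{L^2}(\wb)$ then $f|_{\wb}$ admits an $L^2$-primitive $u$ on $\wb$, so Step~1 gives $f\in B^{p,q}_{L^2}(\Omega)$, i.e.\ $[f]=0$. The final assertion of (b) (vanishing of $H^{p,q}_{L^2}(\Omega)$ when $H^{p,q}_{L^2}(\wb)=0$) is then immediate from injectivity. For part~(a), suppose $f_n\in B^{p,q}_{L^2}(\Omega)$ converges in $L^2_{p,q}(\Omega)$ to some $f$. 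Then $f\in Z^{p,q}_{L^2}(\Omega)$, and the restrictions $f_n|_{\wb}$ lie in $B^{p,q}_{L^2}(\wb)$ and converge to $f|_{\wb}$ in $L^2_{p,q}(\wb)$. Hausdorffness of $H^{p,q}_{L^2}(\wb)$ means $B^{p,q}_{L^2}(\wb)$ is closed, so $f|_{\wb}\in B^{p,q}_{L^2}(\wb)$, and Step~1 yields $f\in B^{p,q}_{L^2}(\Omega)$. Hence $B^{p,q}_{L^2}(\Omega)$ is closed in $Z^{p,q}_{L^2}(\Omega)$, which is Hausdorffness of $H^{p,q}_{L^2}(\Omega)$.

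\textbf{Expected obstacle.} The construction is essentially routine complex-analytic gluing; the only care needed is bookkeeping the domains of $\dbar$ in the maximal (weak) sense — verifying that $\chi u$, $\wt{g}$, and $v$ all lie in the appropriate $L^2$-spaces and are in the domain of the maximal $\dbar$, and that the computations $\dbar\wt{g}=0$ and $\dbar v=f$ hold in the distributional sense on all of $\wt{\Omega}$ and $\Omega$ respectively (in particular, across the interface $b(\supp\chi)$ and across $K$). This is standard, since all transitions occur in regions where one of the pieces vanishes or where both pieces coincide with a smooth multiple of an $L^2$ form with distributional $\dbar$.
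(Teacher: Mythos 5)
Your proposal is correct and follows essentially the same route as the paper: the extension principle of your Step~1 is exactly the paper's Lemma~\ref{lem-siu} (minus the explicit norm estimate, which the paper records via the open mapping theorem but does not actually need for the Proposition), and your Step~2 is the paper's deduction that $B^{p,q}_{L^2}(\Omega)$ is the preimage of the closed subspace $B^{p,q}_{L^2}(\wb)$ under the continuous restriction map, phrased sequentially. No gaps.
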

The proof is based on the following observation:
\begin{lem}\label{lem-siu}With hypothesis and notation as above, there is a constant $C>0$ with the following property.
Suppose that $g\in Z^{p,q}_{L^2}(\Omega)$ is such that the restriction $g|_{\wb}$ is in $B^{p,q}_{L^2}(\wb)$. Then
there is a $u\in A^{p,q-1}_{L^2}(\Omega)$ such that $\dbar u=g$ and
\[ \norm{u}_{L^2(\Omega)} \leq C \norm{g}_{L^2(\Omega)}.\]
\end{lem}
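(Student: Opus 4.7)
The plan is to use the standard cut-off and glue argument, combined with the open mapping theorem applied twice to convert Hausdorff/vanishing cohomology into bounded solution operators.

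First, since $H^{p,q}_{L^2}(\wb)$ is Hausdorff, the operator $\dbar : A^{p,q-1}_{L^2}(\wb) \dashrightarrow L^2_{p,q}(\wb)$ has closed range, so by the open mapping theorem there is a constant $C_1 > 0$ such that for any $\dbar$-exact $h \in B^{p,q}_{L^2}(\wb)$ there exists $v \in A^{p,q-1}_{L^2}(\wb)$ with $\dbar v = h$ and $\|v\|_{L^2(\wb)} \leq C_1 \|h\|_{L^2(\wb)}$. Similarly, since $H^{p,q}_{L^2}(\wt{\Omega}) = 0$, there is a constant $C_2 > 0$ providing a bounded solution operator for $\dbar$ on $\wt{\Omega}$ in degree $(p,q)$. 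Apply the first operator to $g|_{\wb}$ (which is exact by hypothesis) to obtain $v \in A^{p,q-1}_{L^2}(\wb)$ with $\dbar v = g|_{\wb}$ and
\[ \|v\|_{L^2(\wb)} \leq C_1 \|g|_{\wb}\|_{L^2(\wb)} \leq C_1 \|g\|_{L^2(\Omega)}. \]

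Next, fix a cut-off $\chi \in \mathcal{C}^\infty_c(D)$ with $\chi \equiv 1$ on some open neighborhood of $K$. Extend $\chi v$ by zero from $\wb$ to all of $\Omega$; this is legitimate in $L^2$ since $\chi$ vanishes near $\partial D$. Set
\[ w = g - \dbar(\chi v) \quad \text{on } \Omega. \]
Where $\chi \equiv 1$ near $K$, we have $w = g - \dbar v = 0$, so $w$ vanishes in a neighborhood of $K$. Consequently $w$ extends by zero across $K$ to a form $\wt{w} \in Z^{p,q}_{L^2}(\wt{\Omega})$, with the norm estimate
\[ \|\wt{w}\|_{L^2(\wt{\Omega})} \leq \|g\|_{L^2(\Omega)} + \|\dbar\chi \wedge v\|_{L^2(\wb)} + \|\chi \dbar v\|_{L^2(\wb)} \leq (1 + C_1 \|d\chi\|_\infty + 1)\|g\|_{L^2(\Omega)}. \]

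Now apply the bounded solution operator on $\wt{\Omega}$ to $\wt{w}$ to get $\wt{u} \in A^{p,q-1}_{L^2}(\wt{\Omega})$ with $\dbar \wt{u} = \wt{w}$ and $\|\wt{u}\|_{L^2(\wt{\Omega})} \leq C_2 \|\wt{w}\|_{L^2(\wt{\Omega})}$. Define
\[ u = \wt{u}|_{\Omega} + \chi v \in A^{p,q-1}_{L^2}(\Omega), \]
where $\chi v$ is understood to be extended by zero to all of $\Omega$. Then $\dbar u = w + \dbar(\chi v) = g$ on $\Omega$, and combining the above bounds yields $\|u\|_{L^2(\Omega)} \leq C \|g\|_{L^2(\Omega)}$ with a constant $C$ depending only on $C_1, C_2$ and $\|d\chi\|_\infty$, which is independent of $g$.

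The only subtle points to verify carefully are the zero-extension of $\chi v$ across $\partial D$ (clear from compact support of $\chi$) and the zero-extension of $w$ across $K$ (clear from $\chi \equiv 1$ near $K$); neither involves boundary regularity so these should cause no trouble. The genuine content is already encoded in the two applications of the open mapping theorem, which is where the uniform constant $C$ comes from.
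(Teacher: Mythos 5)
Your proof is correct and follows essentially the same route as the paper's: solve on $\wb$ with a bounded operator coming from the Hausdorff hypothesis, cut off, extend $g-\dbar(\chi v)$ by zero across $K$, solve on $\wt\Omega$ using the vanishing of $H^{p,q}_{L^2}(\wt\Omega)$ together with the open mapping theorem, and glue. The decomposition, the role of the cutoff, and the source of the uniform constant all match the paper's argument.
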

\begin{proof}  We denote by $C$ any constant that depends solely on the geometry of the domains $\wb$ and $\wt{\Omega}$, and $C$ may have different values and different occurrences. Since $H^{p,q}_{L^2}(\wb)$ is Hausdorff, there is a $u_0\in A^{p,q-1}_{L^2}(\wb)$ such that $\dbar u_0= g|_{\wb}$, and we have an estimate
\begin{equation}\label{eq-e0}
 \norm{u_0}_{L^2(\wb)} \leq C \norm{g}_{L^2(\wb)}.
\end{equation}
We take a cutoff $\chi\in \mathcal{C}^\infty_0(D)$ such that $\chi\equiv 1$ near $K$. As usual, we assume that after multiplying by a cutoff, we extend functions and forms by zero outside the support of the cutoff. We note that $\dbar (\chi u_0)= \dbar \chi \wedge u_0 + \chi \cdot g$ on $\Omega$,
so that we have an estimate
\begin{equation}\label{eq-e1}
 \norm{\dbar (\chi u_0)}_{L^2(\Omega)} \leq C\norm{g}_{L^2(\Omega)}.
\end{equation}
Let $h= g- \dbar (\chi u_0).$ Since by hypothesis $g\in Z^{p,q}_{L^2}(\Omega)$, we see that  $h\in Z^{p,q}_{L^2}(\Omega)$, and 
near $K$, we have $h= g-\dbar u_0 =0$. If we define
\[ h^\sharp = \begin{cases} h &\text{on $\Omega$}\\ 0 &\text{ on $K$},\end{cases}\]
then $h^\sharp$ belongs to $Z^{p,q}_{L^2}(\wt{\Omega})$, and we have
\begin{equation}\label{eq-e2}
 \norm{h^\sharp}_{L^2(\wt{\Omega})} = \norm{h}_{L^2(\Omega)}\leq C\norm{g}_{L^2(\Omega)},
\end{equation}
where the last estimate follows immediately from the definition of $h$ and \eqref{eq-e1}.  Since $H^{p,q}_{L^2}(\wt{\Omega})=0$,  by the open mapping theorem
there is a $v\in A^{p,q-1}_{L^2}(\wt{\Omega})$ such that
$\dbar v=h^\sharp$ and
\[ \norm{v}_{L^2(\wt{\Omega})} \leq {C} \norm{h^\sharp}_{L^2(\wt{\Omega})}={C} \norm{h}_{L^2({\Omega})} ,\]
where  the last equality holds since $h^\sharp=0$ on $K$. So, if we set
\[ u= v|_{\Omega}+ \chi u_0,\]
then
\[ \dbar u = \dbar v |_{\Omega}+ \dbar (\chi u_0)= h^\sharp|_{\Omega} +\dbar (\chi u_0)=h+\dbar (\chi u_0)=g,\]
by the definition of $h$. Further we have
\begin{align*}
  \norm{u}_{L^2(\Omega)}&\leq \norm{v}_{L^2(\Omega)} + \norm{\chi u_0}_{L^2(\Omega)}\\
  &\leq {C} \norm{h}_{L^2({\Omega})}+ C  \norm{u_0}_{L^2(\wb)}\\
  & \leq C \norm{g}_{L^2(\Omega)},
  \end{align*}
  using \eqref{eq-e0} and \eqref{eq-e2}.
\end{proof}

\begin{proof}[Proof of Proposition~\ref{prop-siu} ](a) Let $\lambda:Z^{p,q}_{L^2}(\Omega)\to Z^{p,q}_{L^2}(\wb)$ be the restriction map $g\mapsto g|_{\wb}$. It follows immediately from   Lemma \ref{lem-siu}
 that $B^{p,q}_{L^2}(\Omega)= \lambda^{-1}(B^{p,q}_{L^2}(\wb))$. But since $B^{p,q}_{L^2}(\wb)$ is closed in $Z^{p,q}_{L^2}(\wb)$ it follows
that $B^{p,q}_{L^2}(\Omega)$ is closed in $Z^{p,q}_{L^2}(\Omega)$.

(b) Let $g\in Z^{p,q}_{L^2}(\Omega)$ be such that  ${\rm class}(g)\in H^{p,q}_{L^2}(\Omega)$ is in the kernel of
the restriction map. Then $g|_{\wb}$ is in $B^{p,q}_{L^2}(\wb)$, so that by Lemma~\ref{lem-siu} above, $g\in B^{p,q}_{L^2}(\Omega)$, and $g$ represents 0 in $H^{p,q}_{L^2}(\Omega)$.
\end{proof}

\subsection{Proof of Theorem~\ref{thm-cohomology}}
We let $\wb= U\setminus K$ in Proposition~\ref{prop-siu}. Let  $R_j=U_j\setminus K_j$ be the factor annuli. Therefore $H^{0,1}_{L^2}(R_j)$ is Hausdorff for each $j$, and 
consequently by Proposition~\ref{prop-specialcase}, we know that
$H^{0,1}_{L^2}(\wb)$ is Hausdorff and vanishes if $n\geq 3$. Since by hypothesis, $H^{0,1}_{L^2}(\wt{\Omega})=0$,
we conclude by Part (a) of Proposition~\ref{prop-siu} that $H^{0,1}_{L^2}(\Omega)$ is Hausdorff and by part (b) that
$H^{0,1}_{L^2}(\Omega)=0$ if $n\geq 3$.



\begin{thebibliography}{12}


\bibitem{Ahl} L.  V.~Ahlfors, {\em Complex Analysis.}
Third edition. McGraw-Hill, Inc. New York. 1979.


\bibitem{ayyurustraube}M. Ayyürü and  E. J. Straube, {\em Compactness of the $\dbar$-Neumann operator on the intersection
of two domains,} Springer Proceedings in Mathematics and Statistics, {\bf 127},  9--15
(Proceedings of  conference in Tunis in honor of M. Salah Baouendi, March
24 --27, 2014.)

\bibitem{chak-archiv}D.~Chakrabarti,
{\em Some non-pseudoconvex domains with explicitly computable non-Hausdorff Dolbeault cohomology. }
Arch. Math. (Basel) \textbf{105} (2015), no. 6, 571--584. 

\bibitem{ChaSh}
D.~Chakrabarti and M.-C. Shaw,  \emph{$L^2$
  {S}erre duality on domains in complex manifolds and applications}.
  Trans. Amer. Math. Society \textbf{364} (2012), 3529--3554.

\bibitem{prod}D.~Chakrabarti and M.-C. Shaw,
{\em The Cauchy-Riemann equations on product domains}.
Math. Ann. \textbf{349} (2011), no. 4, 977--998.

\bibitem{spec}D.~Chakrabarti, {\em
Spectrum of the complex Laplacian on product domains. }
Proc. Amer. Math. Soc.{\bf 138} (2010), no. 9, 3187--3202.



   \bibitem{conway}J.  B. Conway,
 {\em Functions of one complex variable.}
  Second edition. Graduate Texts in Mathematics, 11. Springer-Verlag, New York-Berlin, 1978.

\bibitem{ehsani-bidisc} D. Ehsani,
{\em Solution of the $\overline{\partial}$-Neumann problem on a bi-disc. }
Math. Res. Lett. {\bf 10} (2003), no. 4, 523--533. 




\bibitem{frenkel}
J. Frenkel,
{\em  Cohomologie non ab{\'e}lienne et espaces fibr{\'e}s.}
 { Bull. Soc. Math. France}, {\bf 85} 135--220, 1957.

\bibitem{fk} G. B.   Folland  and J. J. Kohn,
{\em The Neumann problem for the Cauchy-Riemann complex.}
Annals of Mathematics Studies, No. 75. Princeton University Press, Princeton, N.J.; University of Tokyo Press, Tokyo, 1972.


 \bibitem{Fu} S. Fu, \emph{Hearing pseudoconvexity with the Kohn Laplacian},
  Math. Ann. \textbf{331} (2005), 475--485

   \bibitem{GuRo}R. C. Gunning and H. Rossi,
 {\em Analytic functions of several complex variables.}
 Reprint of the 1965 original. AMS Chelsea Publishing, Providence, RI, 2009.
 
 \bibitem{hik}G. Henkin, A. Iordan, J.J.  Kohn;
\emph{Estimations sous-elliptiques pour le problème $\dbar$-Neumann dans un domaine strictement pseudoconvexe à frontière lisse par morceaux.} 
C. R. Acad. Sci. Paris Sér. I Math. \textbf{323} (1996), no. 1, 17--22. 
 
 \bibitem{mh15} A.-K. Herbig and J. D. McNeal, {\em On closed range for $\bar{\partial}$}. Preprint.
 
  \bibitem{Hor1} L.  H\"ormander, {\em $L^2$ estimates
and existence theorems for the $\bar\partial$
operator}. { Acta Math. } {\bf  113} (1965),
89--152.



\bibitem{Hor2} L. H\"ormander, \emph{The null space of the $\dbar$-Neumann operator}.
{{Ann. Inst. Fourier (Grenoble)}} \textbf{54} (2004), 1305--1369.




\bibitem{Ko} J. J.  Kohn, \emph{Global regularity for $\bar\partial$ on weakly
pseudoconvex manifolds}.  Trans. Amer. Math. Soc., \textbf{181} (1973),    273--292.

 \bibitem{laufer} H.  B.  Laufer,
 \emph{On {S}erre duality and envelopes of holomorphy}, Trans. Amer. Math. Soc., \textbf{128} (1967) 414--436.

  \bibitem{lauf} H.  B.  Laufer,
\emph{On the infinite dimensionality of the {D}olbeault cohomology groups}.  Proc. Amer. Math. Soc., \textbf{52} (1975) 293--296.


  \bibitem{LS}
 C. Laurent-Thi{\'e}baut  and  M.-C.  Shaw,  \emph{On the Hausdorff Property of some Dolbeault Cohomology groups  }. Math. Zeit.  \textbf{274} (2013), 1165--1176.
 
  \bibitem{LS2} C. Laurent-Thi{\'e}baut  and  M.-C.  Shaw, 
\emph{Non-closed range property for the Cauchy-Riemann operator. } Analysis and geometry, 207--217, 
Springer Proc. Math. Stat., {\bf 127}, Springer, Cham, 2015

\bibitem{laurentlp}C.  Laurent-Thi{\'e}baut, Christine, \emph{
Théorie $L^p$ et dualité de Serre pour l'équation de Cauchy-Riemann.} 
Ann. Fac. Sci. Toulouse Math. \textbf{(6) 24} (2015), no. 2, 251--279. 


\bibitem{ms1998} J. Michel and M.-C. Shaw,
\emph{Subelliptic estimates for the $\dbar$-Neumann operator on piecewise smooth strictly pseudoconvex domains.}
Duke Math. J. \textbf{93} (1998), no. 1, 115--128. 


 \bibitem{remmert}
 R.  Remmert,
{\em Classical topics in complex function theory.}
Translated from the German by Leslie Kay. Graduate Texts in Mathematics, \textbf{172.} Springer-Verlag, New York, 1998.




 \bibitem{Sh1}  M.-C. Shaw,
\emph { Global solvability and regularity for $\bar{\partial}$ on an annulus
between two weakly pseudo-convex domains}.
  Trans. Amer. Math. Society
  {\textbf{291}}
(1985),
  255--267



 \bibitem{Sh2}  M.-C. Shaw, \emph{The closed range property for  $\dbar$ on domains with pseudoconcave boundary}.
Proceedings for the Fribourg conference, Trends in Mathematics (2010), 307--320.




\bibitem{Tr1} S. Trapani, \emph{Inviluppi di olomorfia et gruppi di      coomologia di Hausdorff}. Rend. Sem. Mat. Univ. Padova   \textbf{75} (1986), 25--37.



 \end{thebibliography}
\end{document}